\definecolor{darkblue}{rgb}{0,0,0.4}
\newtheorem{thm}{Theorem}[section]
\newtheorem{cor}[thm]{Corollary}
\newtheorem{prop}[thm]{Proposition}
\newtheorem{lem}[thm]{Lemma}
\newtheorem{conj}[thm]{Conjecture}
\theoremstyle{remark}
\newtheorem{rem}[thm]{Remark}
\newcommand{\ie}{{\it i.e.}}
\newcommand{\eg}{{\it e.g.}}
\newcommand{\N}{\mathbb N}
\newcommand{\R}{\mathbb R}
\newcommand{\calN}{\mathcal{N}}
\newcommand{\rad}{\mathrm{rad}}
\DeclareMathOperator*{\diam}{diam}
\begin{document}
\title{A maximal energy pointset \\ configuration problem}

\author{Braxton Osting}
\address{Department of Mathematics, University of Utah, Salt Lake City, UT}
\email{osting@math.utah.edu}
\thanks{BO was supported in part by U.S. NSF DMS 16-19755 and 17-52202.}

\author{Brian Simanek}
\address{Department of Mathematics, Baylor University, Waco, TX}
\email{Brian\_Simanek@baylor.edu}
%\thanks{}

\subjclass[2010]{31C20, 31C45, 28A78, 49Q10}
\keywords{extremal pointset configuration, kernel-based energy, MBO diffusion generated method,  bang-bang optimization}

\date{\today}

\begin{abstract}
We consider the extremal pointset configuration problem of maximizing a kernel-based energy subject to the geometric constraints that the points are contained in a fixed set, the pairwise distances are bounded below, and that every closed ball of fixed radius contains at least one point. We also formulate an extremal density problem, whose solution provides an upper bound for the pointset configuration problem in the limit as the number of points tends to infinity.  
Existence of solutions to both problems is established and the relationship between the parameters in the two problems is studied. Several examples are studied in detail, including the density problem for the $d$-dimensional ball and sphere, where the solution can be computed exactly using rearrangement inequalities. We develop a computational method for the density  problem that is very similar to the Merriman-Bence-Osher (MBO) diffusion-generated method. The method is proven to be increasing for all non-stationary iterations and is applied to study more examples. 
\end{abstract}

\maketitle

\section{Introduction} \label{s:Intro}
Optimal pointset configurations have broad applicability in physics and chemistry, information theory and communication,
and scientific computing. Typically, in such applications, one considers a pointset which \emph{minimizes} a certain energy. A prototypical example is the plum pudding model proposed by J. J. Thompson, where one seeks the positions of a fixed number of points (``electrons'') arranged on a sphere which minimizes the total electrostatic potential energy (as described by Coulomb's law). In this paper, we consider a pointset which \emph{maximizes} a certain energy, subject to constraints. 

For any measurable set $S\subseteq\R^p$, let $|S|_d$ denote the $d$-dimensional Hausdorff measure of $S$, where we normalize this measure so that the unit cube $U_d\subset\R^d\subset\R^p$ satisfies $|U_d|_d=1$.  In what follows, we will assume that $\Omega$ is an infinite and compact subset of $\R^p$ having finite and positive $d$-dimensional Hausdorff measure for some $d\in\{1,2,\ldots,p\}$.  In certain situations we will place stronger assumptions on the set $\Omega$, but we need only this minimal set of hypotheses to state the problem that we will study.

Define a \emph{kernel function}, $k\colon \Omega \times \Omega \to (0,\infty)$, which we will assume to be absolutely integrable and satisfy $k(x,y) = f(|x-y|)$ for some completely monotone function\footnote{A function, $f$, is \emph{completely monotone on $(0,\infty)$} if $f \in C^{\infty}(0,\infty)$ and $(-1)^\ell f^{(\ell)}(r) \geq 0$ for all $\ell \in \mathbb N_0$ and all $r>0$ \cite[Definition 7.1]{Wendland_2004}.}, $f\colon (0,\infty)\to [0,\infty)$.  We will also assume that $k$ is positive definite\footnote{A continuous kernel, $k\colon \Omega \times \Omega \to \mathbb R$, is \emph{positive definite} if, for all $N \in \mathbb N$, all sets of pairwise distinct centers $X = \{ x_1 \ldots, x_N \} \subset \mathbb R^d$, and all $\alpha \in \mathbb C^N \setminus \{0\}$, 
$\sum_{j=1}^N\sum_{k=1}^N \alpha_j \overline{\alpha_k} k(x_j,x_k) > 0$ 
\cite[Definition 6.24]{Wendland_2004}.}. 
Example kernels include 
\begin{itemize}
\item the \emph{Riesz $s$-kernel}, $k(x,y) = |x-y|^{-s}$, for $s \in(0,d)$,  
\item the \emph{exponential kernel}, $k(x,y) = \exp( -|x-y| / \sigma )$, for $\sigma > 0$, and
\item the \emph{Gaussian kernel}, $k(x,y) = (4 \pi \tau)^{-d/2} \exp(- |x-y|^2 / 4\tau)$, for $\tau >0$.
\end{itemize}

Fix  positive constants $r$ and $R$, and a positive integer $n$. 
In this paper, we will consider the problem of finding a collection of $n$ points, $X_n = \{ x_j \}_{j=1}^n \subset \Omega$, attaining the maximum in the optimization problem,   
\begin{subequations}
\label{e:DiscOpt}
\begin{align} 
\label{e:DiscOpt1}
\max_{\{x_k\}_{k=1}^n \subset \Omega} \ \   &  \frac{1}{2n^2} \sum_{i \neq j } k(x_i, x_j)  \\ 
\label{e:DiscOpt2}
\textrm{such that} \ \  
&| x_i - x_j | \geq rn^{-1/d},  \qquad\quad \ \  \forall  i \neq j \\ 
\label{e:DiscOpt3}
&X_n \cap \overline{B_{Rn^{-1/d}}(y)} \neq \emptyset,  \qquad \forall y \in \Omega. 
\end{align}
\end{subequations}
The distance here is the Euclidean distance in the ambient space, $\R^p$. 
Constraint \eqref{e:DiscOpt2} requires that the minimum pairwise distance is at least  $rn^{-1/d}$.  Constraint \eqref{e:DiscOpt3}  requires that every (closed) ball of radius $Rn^{-1/d}$ centered at a point in $\Omega$ contains a point of $X_n$. Of course, the domain $\Omega$ and constants $r$ and $R$ must be chosen so that there is at least one admissible configuration satisfying the constraints \eqref{e:DiscOpt2} and \eqref{e:DiscOpt3}. If an admissible configuration exists, then the upper semicontinuity of $k$ implies an extremal configuration satisfying \eqref{e:DiscOpt} exists, though it need not be unique. We discuss necessary conditions for the existence of admissible configurations for \eqref{e:DiscOpt} in Section \ref{s:AdPC}. 

To the best of our knowledge,  the maximization problem in \eqref{e:DiscOpt} has not been previously studied, although \citet[p. 226]{dragnev1997constrained} describe how such constraints can arise when one tries to incorporate varying conductivity into certain physical problems.  Our interpretation of  \eqref{e:DiscOpt} is as follows. The points represent distribution sites for a particular good or service. On one hand, \eqref{e:DiscOpt3} imposes the requirement that every location is within a distance $Rn^{-1/d}$ of a site. On the other hand, maybe because of distribution costs, it is cost effective to concentrate the sites together,  as described by the objective \eqref{e:DiscOpt1}. However, via \eqref{e:DiscOpt2}, we impose that the sites not be too close. Intuitively, the solution of \eqref{e:DiscOpt} will be to have the sites as closely packed as possible in the ``center of $\Omega$'', with enough sites arranged elsewhere so that \eqref{e:DiscOpt3} is satisfied. 
In this paper, we make this intuition precise.

\subsection{Outline and statement of results} 
We proceed as follows. 

In Section \ref{s:AdPC}, the  admissibility of pointset configurations in \eqref{e:DiscOpt} is studied. 
In Proposition \ref{p:adm}, we give sufficient conditions for $r$ and $R$  such that there exists an admissible pointset configuration for \eqref{e:DiscOpt} for large $n$.  
In Theorem \ref{t:weak}, we establish a preliminary result giving sufficient conditions so that  every weak-$*$ limit of the empirical  measure associated with the pointset is absolutely continuous with respect to $d$-dimensional Hausdorff measure.

In Section \ref{s:DensityFormulation}, we give a continuous analog of \eqref{e:DiscOpt} that, roughly speaking, corresponds to the density of the pointset in \eqref{e:DiscOpt} in the limit as $n \to \infty$. In Propositions \ref{p:contExist} and \ref{p:bang-bang}, the existence of a solution to this problem and properties of extremal densities are established. We then present results for several solvable examples. 

In Section  \ref{s:rel}, we discuss the relationship between the discrete and continuous problems. It is shown that the continuous problem gives an upper bound for the solution of the discrete problem; see Proposition \ref{p:precise} and Theorem \ref{t:equal}. 

In Section \ref{s:ex}, we introduce a computational method for the density  problem that is very similar to the Merriman-Bence-Osher (MBO) diffusion-generated method. 
In Proposition \ref{prop:AlgIncrease}, the method is proven to be increasing for all non-stationary iterations. The method is applied to several example problems, emphasizing qualitative properties of the resulting computed solutions. 

We conclude in Section \ref{s:disc} with a brief discussion.

\section{Admissibility of pointset configurations for \eqref{e:DiscOpt}} \label{s:AdPC}
For any given $n\in\N$, the possible values of $r$ and $R$ for which there exists an admissible configuration of $n$ points in $\Omega$ satisfying \eqref{e:DiscOpt2} and \eqref{e:DiscOpt3} depend on the geometry of $\Omega$ in a very delicate way.  However, we can find relatively simple constraints on $r$ and $R$ that are sufficient for the existence of an admissible configuration for all large $n\in\N$.

The key ideas that we will rely on are those of separation distance, covering radius, and mesh ratio.  To be precise, if $X_n=\{x_j\}_{j=1}^n\subset\Omega$, we define the \emph{separation distance}, $\delta(X_n)$, of this configuration by
\[
\delta(X_n)=\min_{i\neq j}|x_i-x_j|,
\]
where $|\cdot|$ denotes the Euclidean distance in $\R^p$.  Similarly, we define the \emph{covering radius}, $\eta(X_n)$, of this configuration by
\[
\eta(X_n)=\max_{y\in\Omega}\min_{x_j\in X_n}|y-x_j|
\]
and the \emph{mesh ratio}, $\gamma(X_n)$, of this configuration by
\[
\gamma(X_n)=\frac{\eta(X_n)}{\delta(X_n)}.
\]
Here we used the notation and terminology from \cite{bondarenko2014}.  The \emph{$n$-point best-packing distance on $\Omega$}, denoted $\delta_n(\Omega)$, is defined as the supremum of $\delta(X_n)$ over all subsets of $\Omega$ of cardinality $n$.  Any configuration that attains this supremum is called an \emph{$n$-point best-packing configuration on $\Omega$}.  Similarly, the \emph{$n$-point best-covering distance on $\Omega$},  denoted $\eta_n(\Omega)$, is defined as the infimum of $\eta(X_n)$ over all subsets of $\Omega$ of cardinality $n$.  Any configuration that attains this infimum is called an \emph{$n$-point best-covering configuration on $\Omega$}.  Now we can state conditions on $r$ and $R$ that guarantee the existence of admissible configurations.

\begin{prop}\label{p:adm}
Let $\Omega$ be an infinite compact subset of $\R^p$.  Suppose there exists $d\in\N$ and positive constants $C_*$ and $C^*$ so that
\[
\liminf_{n\to\infty}n^{1/d}\delta_n(\Omega)=C_* 
\qquad \textrm{and} \qquad 
\limsup_{n\to\infty}n^{1/d}\delta_n(\Omega)=C^*.
\]
If $r$ and $R$ satisfy $r<C_{_*}\leq C^*<R$, then for each sufficiently large $n\in\N$ there exists $X_n=\{x_j\}_{j=1}^n\subset\Omega$ that satisfies \eqref{e:DiscOpt2} and \eqref{e:DiscOpt3}.
\end{prop}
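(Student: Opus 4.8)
The plan is to produce the required configuration as a carefully chosen $n$-point best-packing configuration on $\Omega$. First I would record two translations: constraint \eqref{e:DiscOpt2} asks exactly that $\delta(X_n)\ge rn^{-1/d}$, while constraint \eqref{e:DiscOpt3} asks exactly that $\eta(X_n)\le Rn^{-1/d}$, since a closed ball of radius $Rn^{-1/d}$ about every $y\in\Omega$ meets $X_n$ precisely when $\max_{y}\min_j|y-x_j|\le Rn^{-1/d}$. Because $\Omega$ is infinite and compact, the map $(x_1,\dots,x_n)\mapsto\delta(\{x_j\})$ is continuous on the compact set $\Omega^n$ and attains its maximum, so an $n$-point best-packing configuration exists and has separation exactly $\delta_n(\Omega)$. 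The separation constraint is then immediate: since $\liminf_{n\to\infty} n^{1/d}\delta_n(\Omega)=C_*>r$, we have $n^{1/d}\delta_n(\Omega)>r$ for all large $n$, whence $\delta_n(\Omega)>rn^{-1/d}$ and \eqref{e:DiscOpt2} holds. The entire difficulty is therefore to control the covering radius of a best-packing configuration.

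The key step, which I expect to be the main obstacle, is the lemma that \emph{there exists a best-packing configuration $\tilde X_n$ with $\eta(\tilde X_n)\le\delta_n(\Omega)$.} This cannot be asserted for an arbitrary best-packing configuration: on a ``plateau'' where $\delta_{n+1}(\Omega)=\delta_n(\Omega)$ a best-packing configuration may leave a hole of radius exactly $\delta_n(\Omega)$, so the configuration must be selected with care. I would choose, among all best-packing configurations (a nonempty compact subset of $\Omega^n$), one whose number of \emph{contact pairs} (pairs $\{x_i,x_j\}$ with $|x_i-x_j|=\delta_n(\Omega)$) is smallest; this minimum is attained since the count is a positive integer, always at least one. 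Write $\delta:=\delta_n(\Omega)$ and suppose for contradiction that $\eta(\tilde X_n)>\delta$, so there is $y\in\Omega$ with $|y-x_j|>\delta$ for every $j$. Pick a contact pair $\{x_a,x_b\}$ and replace $x_a$ by $y$. In the new configuration every distance from $y$ is $>\delta$ and every remaining pairwise distance is $\ge\delta$, so its separation is $\ge\delta$ and hence equals $\delta$ by maximality; moreover $y$ creates no new contact pair, while the pair $\{x_a,x_b\}$ is destroyed. The new configuration is thus again best-packing but has strictly fewer contact pairs, contradicting minimality. Therefore $\eta(\tilde X_n)\le\delta$. (Equivalently, one may run this swap as a descent: each swap strictly lowers the contact count, a positive integer, so after finitely many swaps the process halts, necessarily because $\eta\le\delta$.)

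With the lemma in hand the proof concludes quickly. For all large $n$ we have $\delta(\tilde X_n)=\delta_n(\Omega)>rn^{-1/d}$ from the separation step, giving \eqref{e:DiscOpt2}; and since $\limsup_{n\to\infty} n^{1/d}\delta_n(\Omega)=C^*<R$ we have $n^{1/d}\delta_n(\Omega)<R$, hence $\eta(\tilde X_n)\le\delta_n(\Omega)<Rn^{-1/d}$, giving \eqref{e:DiscOpt3}. Thus $\tilde X_n$ is admissible for all sufficiently large $n$. The hypotheses enter exactly at the two ends: $C_*>r$ furnishes the separation, while $C^*<R$ together with the covering bound $\eta(\tilde X_n)\le\delta_n(\Omega)$ furnishes the covering. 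The crux is genuinely the contact-pair selection argument, which replaces the naive (and insufficient) single-swap attempt and makes precise the ``self-correcting'' tendency of best-packing configurations to avoid large holes.
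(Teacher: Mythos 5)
Your proof is correct and follows essentially the same route as the paper: both reduce the problem to exhibiting an $n$-point best-packing configuration with mesh ratio at most $1$, then obtain \eqref{e:DiscOpt2} from $r<C_*$ and \eqref{e:DiscOpt3} from $\eta(X_n)\leq\delta_n(\Omega)<Rn^{-1/d}$ using $C^*<R$. The only difference is that the paper gets the mesh-ratio lemma by citing \cite[Theorem 1]{bondarenko2014}, whereas you prove it directly via the contact-pair minimization (swap) argument, which is sound and is in essence the argument underlying that cited result.
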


\begin{proof}
Since $r<C_*$, by definition we know that for each large $n\in\N$ there exists an $n$-point best-packing configuration $X_n^*\subset\Omega$ so that $X_n^*$ satisfies \eqref{e:DiscOpt2}.  Furthermore, \cite[Theorem 1]{bondarenko2014} assures us that we may choose $X_n^*$ to have mesh ratio at most $1$.  Thus
\[
\eta(X_n^*)\leq\delta(X_n^*)\leq(C^*+o(1))n^{-1/d}<Rn^{-1/d}
\]
when $n$ is sufficiently large.  We conclude that $X_n^*$ satisfies \eqref{e:DiscOpt3} when $n$ is sufficiently large, so it is an admissible configuration.
\end{proof}

Sets $\Omega$ for which the hypotheses of Proposition \ref{p:adm} are satisfied include smooth $d$-dimensional manifolds and certain perturbations of such sets (\eg, two intersecting line segments in $\R^2$) (see \cite[Section 1]{borodachov07}).  In the remainder of this paper, we'll assume that $r$ and $R$ in \eqref{e:DiscOpt} are chosen so that admissible configurations exist for all large $n$. 

The conditions \eqref{e:DiscOpt2} and \eqref{e:DiscOpt3} assure us that any sequence of configurations that satisfies these properties is well-distributed in the set $\Omega$.  To make this more precise, we need some additional terminology.

One idea we will need is that of $d$-dimensional packing premeasure; see \cite{tricot82}.  For a given set $\Omega$, we define $P_d(\Omega,\delta)$ by
\begin{align*}
&P_d(\Omega,\delta)=\sup\left\{\sum_{n=1}^N(\diam B_n)^d\colon {{B_i\mbox{ is a closed ball, $\diam(B_i)\leq\delta$},}\atop{\mbox{center}(B_i)\in \Omega, \ B_i\cap B_j=\emptyset\mbox{ when }i\neq j}}\right\},
\end{align*}
where $N\in\N\cup\{\infty\}$ in the above expression.  We then define the \emph{$d$-dimensional packing premeasure}, 
\[
P_d(\Omega)=\lim_{\delta\rightarrow0^+}P_d(\Omega;\delta),
\]
which is a premeasure in the sense of \cite[Definition 5]{rogers70} on the collection of totally bounded subsets of $\R^p$ (the limit as $\delta\rightarrow0^+$ exists by monotonicity).  We will also need  some notation associated with Hausdorff measure.  For each $\delta>0$, define 
\[
h_d(\Omega,\delta)=\inf\left\{\sum_{j=1}^N(\diam B_j)^d\colon \mbox{each }B_j\mbox{ is an open ball of diameter}<\delta, \ \Omega\subseteq\bigcup_{j=1}^NB_j\right\},
\]
although for compact sets in $\R^p$, this quantity remains unchanged if we consider closed balls instead of open balls. The \emph{$d$-dimensional Hausdorff outer measure}, $h_d$, is then
\[
h_d(\Omega)=\lim_{\delta\rightarrow0^+}h_d(\Omega,\delta).
\]
Now we can state our result.

\begin{thm}\label{t:weak}
Suppose there exists a basis for the topology on $\Omega$ consisting of open sets $\{U_{\alpha}\}_{\alpha\in I}$ that satisfy
\begin{itemize}
\item[i)] the boundary (in $\Omega$) $\partial U_{\alpha}$ of $U_{\alpha}$ has $h_d(\partial U_{\alpha})=P_d(\partial U_{\alpha})=0$ for all $\alpha\in I$,
\item[ii)] $P_d(U_{\alpha})=h_d(U_{\alpha})$ for all $\alpha\in I$.
\end{itemize}
For $n\in\{2,3,\ldots\}$, let $X_n\subset\Omega$ satisfy the conditions \eqref{e:DiscOpt2} and \eqref{e:DiscOpt3} and define the measure
\begin{align}\label{nun}
\nu_n:=\frac{1}{n}\sum_{j=1}^n\delta_{x_n}. 
\end{align}
Every weak-$*$ limit point of $\{\nu_n\}_{n\geq2}$ is mutually absolutely continuous with respect to $d$-dimensional Hausdorff measure on $\Omega$.
\end{thm}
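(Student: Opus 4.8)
The plan is to prove the two one-sided statements separately: that every weak-$*$ limit point $\nu$ satisfies $\nu\ll h_d$ (using the separation constraint \eqref{e:DiscOpt2}) and $h_d\ll\nu$ (using the covering constraint \eqref{e:DiscOpt3}), abbreviating throughout $\lambda:=h_d|_\Omega$, which is a finite positive Borel measure since $0<h_d(\Omega)<\infty$. Fixing a subsequence with $\nu_{n_k}\to\nu$ weak-$*$, compactness of $\Omega$ makes $\nu$ a Borel probability measure supported on $\Omega$. Before either direction I would record the bridge that converts the hypotheses, which are stated set-by-set, into a statement about measures. Writing $\mathcal P_d$ for the genuine (countably subadditive) $d$-dimensional packing measure built from the premeasure $P_d$, one always has $h_d\le\mathcal P_d\le P_d$, so hypothesis (ii) forces $h_d(U_\alpha)=\mathcal P_d(U_\alpha)$ on every basis set. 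Because $h_d\le\mathcal P_d$ holds for all sets, the difference $\phi:=\mathcal P_d-h_d$ is a nonnegative finite Borel measure; it vanishes on the basis $\{U_\alpha\}$, hence on every open set by countable subadditivity, hence identically. Thus $\mathcal P_d=h_d=\lambda$ as Borel measures on $\Omega$. This subadditivity step is exactly what tames the overlaps that obstruct a naive ``sum over a basic cover'' argument, and it is the conceptual heart of the proof.

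For $\nu\ll\lambda$ I would exploit \eqref{e:DiscOpt2}. For any set $S$, the balls $\{\overline{B_{\frac r2 n^{-1/d}}(x_j)}\colon x_j\in X_n\cap S\}$ are pairwise disjoint, centered in $S\cap\Omega$, and of diameter $rn^{-1/d}$, so they form an admissible packing and $|X_n\cap S|\,(rn^{-1/d})^d\le P_d(S,rn^{-1/d})$, i.e. $\nu_n(S)\le r^{-d}P_d(S,rn^{-1/d})$. Applied to an open ball $B_\rho(x)$ and letting $n=n_k\to\infty$ (so $P_d(B_\rho(x),rn^{-1/d})\to P_d(B_\rho(x))$) gives $\nu(B_\rho(x))\le r^{-d}P_d(B_\rho(x))$ by the portmanteau inequality for open sets. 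Using the identity $\mathcal P_d=\lambda$ together with the a.e.\ upper $d$-regularity of the finite measure $\lambda$ (its upper $d$-density is at most $1$ at $\lambda$-a.e.\ point), I would bound the premeasure of a small ball by $\lambda$ of a slightly larger ball, so that the upper derivative $\limsup_{\rho\to0}\nu(B_\rho(x))/\lambda(B_\rho(x))$ is finite at $\nu$-a.e.\ $x$; the Besicovitch differentiation theorem in $\R^p$ then yields $\nu\ll\lambda$. In particular $\nu(\partial U_\alpha)=0$ for every basis set, since $h_d(\partial U_\alpha)=0$.

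For $\lambda\ll\nu$ I would exploit \eqref{e:DiscOpt3} dually. Fix a basis set $U$, put $\rho=Rn^{-1/d}$, and choose a maximal $2\rho$-separated set $Z=\{z_i\}\subset\Omega\cap U$. The balls $\overline{B_\rho(z_i)}$ are pairwise disjoint, and by \eqref{e:DiscOpt3} each contains a point of $X_n$, so $|X_n\cap U^{(\rho)}|\ge|Z|$, where $U^{(\rho)}$ is the $\rho$-neighborhood of $U$; meanwhile maximality forces $\overline{B_{2\rho}(z_i)}$ to cover $\Omega\cap U$, giving $\lambda(U)\le\sum_i\lambda(B_{2\rho}(z_i))$. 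Selecting the $z_i$ among the $\lambda$-a.e.\ points of upper density at most $1$ gives $\lambda(B_{2\rho}(z_i))\le CR^d n^{-1}$ for small $\rho$, whence $|Z|\ge cR^{-d}n\,\lambda(U)$ and $\nu_n(U^{(\rho)})\ge cR^{-d}\lambda(U)$. Passing to the limit through closed neighborhoods (portmanteau for closed sets) and shrinking them, together with $\nu(\partial U)=0$ from the previous paragraph, yields $\nu(U)\ge cR^{-d}\lambda(U)$ for every basis set; here the second half of hypothesis (i), $P_d(\partial U)=0$, is what guarantees via the packing bound that the separated configurations place asymptotically no mass on $\partial U$, so the counts converge to $\nu$ of the open set. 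An outer-regularity argument combined with $\mathcal P_d=\lambda$ then promotes this to $\lambda\ll\nu$, and the two directions together give mutual absolute continuity.

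I expect the covering direction to be the main obstacle. The separation bound is essentially a single packing inequality followed by a limit, whereas the lower bound requires an upper Ahlfors-type estimate $\lambda(B_\rho(z))\lesssim\rho^d$ to ensure the covering balls are genuinely small, which is available only $\lambda$-almost everywhere and must be coupled carefully with the maximal-separated-set construction. Moreover, both directions hinge on comparing the packing \emph{premeasure} of small balls to the \emph{measure} $\lambda$ at those scales, and the only reason this does not founder on overlapping covers is the identity $\mathcal P_d=h_d$ established at the outset.
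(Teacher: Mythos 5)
Your overall architecture (the separation constraint \eqref{e:DiscOpt2} gives $\nu\ll h_d$, the covering constraint \eqref{e:DiscOpt3} gives $h_d\ll\nu$) matches the paper's, and several ingredients coincide: the packing inequality $\nu_n(S)\le r^{-d}P_d(S,rn^{-1/d})$, the portmanteau step, and the covering count. Your preliminary ``bridge'' $\mathcal{P}_d=h_d$ (for the countably subadditive packing \emph{measure} $\mathcal{P}_d$) is correct and is a nice observation. However, the packing direction has a genuine gap at its key step: you propose to ``bound the premeasure of a small ball by $\lambda$ of a slightly larger ball,'' where $\lambda:=h_d|_\Omega$, and neither tool you invoke can deliver this. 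First, the identity $\mathcal{P}_d=\lambda$ controls the \emph{measure}, not the \emph{premeasure} $P_d$ that actually appears in your bound $\nu(B_\rho(x))\le r^{-d}P_d(B_\rho(x))$: the premeasure is not countably subadditive and satisfies $P_d(S)=P_d(\overline{S})$, so it can strictly exceed $\mathcal{P}_d$ on open sets even under the theorem's hypotheses (for $\Omega=[0,1]$, $d=1$, with the basis of open intervals, the complement $U$ of a fat Cantor set has $P_1(U)=P_1([0,1])=1>h_1(U)=\mathcal{P}_1(U)$). Hypothesis (ii) equates $P_d$ with $h_d$ \emph{only on the basis elements} $U_\alpha$, and the balls $B_\rho(x)\cap\Omega$ you test against need not be basis elements; this is precisely why the hypotheses are phrased for a basis, and why the paper's proof only ever applies the packing bound to the sets $U_\alpha$ themselves (and to finite unions of them, handled by \emph{finite} subadditivity of $P_d$), never to balls. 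Second, ``a.e.\ upper $d$-regularity of $\lambda$'' points the wrong way: to dominate a packing sum $\sum_i(\diam B_i)^d$ by $\lambda$ of a neighborhood you need a \emph{lower} Ahlfors-type bound $\lambda(B_t(z))\ge c\,t^d$ at the (arbitrary) packing centers, and lower densities of $h_d$ on sets of finite measure can vanish a.e.; an upper bound on $\lambda$ of balls is useless here. Without this step, the finiteness of $\limsup_{\rho\to0}\nu(B_\rho(x))/\lambda(B_\rho(x))$ is unsupported and the Besicovitch argument (which additionally has a mismatched-radii problem, since $\lambda$ need not be doubling and the comparison involves $B_\rho$ against $B_{2\rho}$) never gets off the ground.

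A secondary point: your difficulty assessment is inverted. The covering direction, which you call ``the main obstacle,'' is the easy one. The paper's single display, $\nu(\overline{U})\ge\liminf_n n\,h_d(\overline{U},2Rn^{-1/d})/\bigl(n(2R)^d\bigr)=h_d(\overline{U})/(2R)^d$, works verbatim for \emph{every} closed set, requires no basis, no density points, and no hypotheses beyond Section 1, and immediately yields $h_d\le(2R)^d\nu$, hence $h_d\ll\nu$. Your maximal-separated-set variant is repairable but, as written, applies the a.e.\ statement ``upper density at most $1$'' at centers $z_i$ forced by the maximality construction, at the single scale $\rho=Rn^{-1/d}$ uniform in $i$ — conflating a pointwise-a.e.\ bound (whose threshold radius depends on the point) with a uniform one; an Egorov-type uniformization is needed. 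Likewise, your final ``outer-regularity'' promotion from basis sets to $\lambda\ll\nu$ runs into the same overlap problem you flagged at the outset ($\sum_i\lambda(U_{\alpha_i})$ over an overlapping basic cover is not bounded by $\lambda$ of the union); this is avoided by stating the covering bound for closed sets, as the paper effectively does.
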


\begin{proof}
Since $\Omega$ is compact, there exist weak-$*$ limit points of the sequence $\{\nu_n\}_{n\geq2}$.  Let $\nu$ be such a limit point and choose any $U_{\alpha}$ as in the statement of the theorem. First we will show that $\nu(\partial U_{\alpha})=0$.  Since $\partial U_{\alpha}$ is compact, we know that for any $\varepsilon>0$ we can find an open set $W$ that contains $\partial U_{\alpha}$, is a finite union of basis elements for the topology of $\Omega$, and satisfies $P(W)<\varepsilon$.  By condition \eqref{e:DiscOpt2}, we calculate
\[
\nu(\partial U_{\alpha})\leq \limsup_{n\to\infty}\nu_n(W)\leq\limsup_{n\to\infty}\frac{nP_d(W,rn^{-1/d})}{nr^d}=\frac{P_d(W)}{r^d}<\frac{\varepsilon}{r^d}.
\]
Since $\varepsilon>0$ was arbitrary, we conclude that $\nu(\partial U_{\alpha})=0$.

By similar reasoning, we calculate
\[
\nu(U_{\alpha})\leq \limsup_{n\to\infty}\nu_n(U_{\alpha})\leq\limsup_{n\to\infty}\frac{nP_d(U_{\alpha},rn^{-1/d})}{nr^d}=\frac{P_d(U_{\alpha})}{r^d}.
\]
%Now, for $\varepsilon>0$, let $\bar{U}_{\alpha}=\{x\in\Omega:\mbox{dist}(x,U_{\alpha})<\varepsilon\}$.
Using condition \eqref{e:DiscOpt3}, we calculate
\[
\nu(\bar{U}_{\alpha})\geq\liminf_{n\to\infty}\nu_n(\bar{U}_{\alpha})\geq \liminf_{n\to\infty}\frac{nh_d(\bar{U}_{\alpha},2Rn^{-1/d})}{n(2R)^d}=\frac{h_d(\bar{U}_{\alpha})}{(2R)^d}.
\]
Since $\nu(\partial U_{\alpha})=h_d(\partial U_{\alpha})=0$, it follows that
\[
\frac{h_d(U_{\alpha})}{(2R)^d}\leq\nu(U_{\alpha})\leq \frac{P_d(U_{\alpha})}{r^d}=\frac{h_d(U_{\alpha})}{r^d},
\]
which proves the claim for the sets $\{U_{\alpha}\}_{\alpha\in I}$.  The claim for a general open set now follows from the Monotone Convergence Theorem.
\end{proof}

\begin{rem}  In fact, the proof of Theorem \ref{t:weak} shows that the limiting measure $\nu$ has a density that we can control by choosing $r$ and $R$ appropriately.  We will return to this theme in Section \ref{s:rel}.
\end{rem}

Sets $\Omega$ that satisfy the conditions of Theorem \ref{t:weak} include the $d$-dimensional sphere, where the set $\{U_{\alpha}\}_{\alpha\in I}$ can be chosen as the set of all spherical caps. We'll revisit this example in Section \ref{s:DiscSphere}.

\section{Density formulation} \label{s:DensityFormulation}
Let $\Omega \subset \R^p$ and $k\colon \Omega \times \Omega \to (0,\infty)$ be a domain and kernel satisfying the assumption in Section~\ref{s:Intro}.  Define the quadratic functional $E\colon L^\infty(\Omega) \to \mathbb R$ by 
\begin{equation} \label{e:E}
E[\rho] := \frac{1}{2}  \int_{\Omega \times \Omega}  \ k(x,y)  \ \rho(x) \ \rho(y) \ dx \ dy,
\end{equation}
where $dx$ is $d$-dimensional Hausdorff measure on $\Omega$, normalized as in Section~\ref{s:Intro}.  We consider the optimization problem 
\begin{equation} \label{e:DensityFormulation}
\sup \ \{ E[\rho] \colon \rho \in A(\Omega, \rho_+, \rho_-) \},
\end{equation}
where, for constants $\rho_+$ and $\rho_-$ satisfying $\rho_+ \geq | \Omega|_d^{-1} \geq \rho_- > 0$,  the \emph{admissible class}, $A(\Omega, \rho_+,\rho_-)$, is defined as
\begin{equation} \label{e:A}
A(\Omega, \rho_+,\rho_-) :
= \{ \rho \in L^\infty(\Omega) \colon \int_\Omega \rho(x) \,dx=1, \ \  \rho_- \leq \rho(x) \leq \rho_+ \textrm{ for  a.e. } x \in \Omega \}.
\end{equation}
The admissible class is nonempty since it contains the constant function, $\rho \equiv |\Omega|_d^{-1}$.  In Section \ref{s:rel}, we will show that \eqref{e:DensityFormulation} is the density formulation of the discrete problem  \eqref{e:DiscOpt}. Here, we first establish some properties of \eqref{e:DensityFormulation}. 

\begin{prop} \label{p:contExist} Let $\Omega \subset \R^p$ and $k\colon \Omega \times \Omega \to (0,\infty)$ be a kernel function satisfying the assumptions in Section \ref{s:Intro}. 
Let  $\rho_+ \geq | \Omega|^{-1} \geq \rho_- > 0$. 
The supremum in \eqref{e:DensityFormulation} is attainted by at least one function $\rho^\star \in A(\Omega, \rho_+,\rho_-)$.
\end{prop}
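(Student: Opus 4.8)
The plan is to apply the direct method of the calculus of variations. First, observe that the supremum is finite: for any $\rho \in A(\Omega,\rho_+,\rho_-)$, the pointwise bound $\rho \leq \rho_+$ together with the absolute integrability of $k$ gives $E[\rho] \leq \tfrac{1}{2}\rho_+^2 \, \|k\|_{L^1(\Omega\times\Omega)} < \infty$. Let $\{\rho_n\} \subset A(\Omega,\rho_+,\rho_-)$ be a maximizing sequence, so that $E[\rho_n]\to \sup\{E[\rho]\colon \rho\in A(\Omega,\rho_+,\rho_-)\}$. Since $0<\rho_-\leq\rho_n\leq\rho_+$ pointwise and $|\Omega|_d<\infty$, the sequence is bounded in $L^\infty(\Omega)$, so by the Banach--Alaoglu theorem it has a subsequence (not relabeled) converging weak-$*$ to some $\rho^\star\in L^\infty(\Omega)$.

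Second, I would verify that the weak-$*$ limit is admissible, i.e.\ $\rho^\star\in A(\Omega,\rho_+,\rho_-)$. Testing the convergence against the constant $1\in L^1(\Omega)$ yields $\int_\Omega \rho^\star\,dx=\lim_n\int_\Omega\rho_n\,dx=1$, and testing against indicators $\mathbf 1_S$ of measurable $S\subseteq\Omega$ yields $\rho_-|S|_d\leq\int_S\rho^\star\,dx\leq\rho_+|S|_d$ for every such $S$, whence $\rho_-\leq\rho^\star\leq\rho_+$ a.e.\ It then remains only to prove $E[\rho^\star]=\lim_n E[\rho_n]$, i.e.\ that $E$ is sequentially continuous along the maximizing sequence.

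The hard part is precisely this last continuity, and it is the genuine heart of the argument: because $k$ is positive definite, $E$ is convex and therefore only weak-$*$ \emph{lower} semicontinuous, which is the wrong direction for a maximization. To recover the reverse inequality I would exploit compactness of the integral operator $(T\rho)(x)=\int_\Omega k(x,y)\rho(y)\,dy$. The algebraic identity $\rho_n\otimes\rho_n-\rho^\star\otimes\rho^\star=(\rho_n-\rho^\star)\otimes\rho_n+\rho^\star\otimes(\rho_n-\rho^\star)$ reduces the difference $E[\rho_n]-E[\rho^\star]$ to $\tfrac12\int_\Omega(\rho_n-\rho^\star)(T\rho_n)\,dx+\tfrac12\int_\Omega(\rho_n-\rho^\star)(T\rho^\star)\,dx$. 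The second integral tends to $0$ directly, since $T\rho^\star\in L^1(\Omega)$ (as $\|T\rho^\star\|_{L^1}\leq\rho_+\|k\|_{L^1}$) and $\rho_n\to\rho^\star$ weak-$*$. For the first integral I would show $T\rho_n\to T\rho^\star$ strongly in $L^2(\Omega)$, which follows once $T$ is a compact operator on $L^2(\Omega)$: then the first integral is bounded by $\|\rho_n-\rho^\star\|_{L^2}$ times a strongly convergent factor, and a short estimate kills it. To establish compactness despite the possible diagonal singularity of $k$ (e.g.\ a Riesz kernel with $s$ close to $d$), I would split $k=k\,\mathbf 1_{|x-y|\geq\varepsilon}+k\,\mathbf 1_{|x-y|<\varepsilon}$: the far part has a bounded kernel, hence defines a Hilbert--Schmidt, thus compact, operator on $L^2(\Omega)$, while the near part has $L^2\to L^2$ operator norm controlled (via the Schur test, using the symmetry of $k$) by $\sup_x\int_{|x-y|<\varepsilon}k(x,y)\,dy$. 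Thus $T$ would be a norm limit of compact operators, hence compact.

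Finally, combining these steps gives $E[\rho^\star]=\lim_n E[\rho_n]=\sup\{E[\rho]\colon\rho\in A(\Omega,\rho_+,\rho_-)\}$, so $\rho^\star$ attains the supremum. I expect the compactness of $T$ to be the only delicate step; in particular, the uniform vanishing of the near-diagonal quantity $\sup_x\int_{|x-y|<\varepsilon}k(x,y)\,dy$ as $\varepsilon\to0^+$ is where the local integrability of the completely monotone $f$ against $d$-dimensional Hausdorff measure (and mild regularity of $\Omega$) must be used. The remaining pieces are routine applications of weak-$*$ compactness and of the fact that the admissibility constraints are preserved under weak-$*$ limits.
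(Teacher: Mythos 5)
Your overall skeleton is the direct method, which is also the paper's route (the paper bounds $E$, asserts continuity of $E$ for the weak-$*$ topology, and invokes weak-$*$ sequential compactness of $A(\Omega,\rho_+,\rho_-)$); your finiteness and admissibility-of-the-limit steps are correct, and your diagnosis of the crux is exactly right: positive definiteness makes $E$ convex, hence only weak-$*$ \emph{lower} semicontinuous, so genuine weak-$*$ continuity must come from a compactness-type property of the operator $K$ in \eqref{eq:HS}, and your algebraic decomposition of $E[\rho_n]-E[\rho^\star]$ is the right way to isolate it. The gap is in the step you defer. Your Schur-test control of the near-diagonal piece requires $\sup_{x\in\Omega}\int_{\{y\,:\,|x-y|<\varepsilon\}}k(x,y)\,dy\to 0$ as $\varepsilon\to0^+$, and this does \emph{not} follow from the hypotheses of the proposition: Section \ref{s:Intro} assumes only $k\in L^1(\Omega\times\Omega)$ and that $\Omega$ is compact with finite positive $d$-dimensional Hausdorff measure, so Fubini gives $k(x,\cdot)\in L^1(\Omega)$ for \emph{almost every} $x$ with no uniformity in $x$, and no Ahlfors-type upper regularity of $\Omega$ is assumed. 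The failure is real, not cosmetic: take $d=1$, $k(x,y)=|x-y|^{-1/2}$, and $\Omega\subset\mathbb{R}^2$ a union of short vertical segments of length $n^{-3/2}$ centered at $(n^{-3/2},0)$ together with the origin; then $k\in L^1(\Omega\times\Omega)$, yet $\int_\Omega k(0,y)\,dy=\infty$ (so your supremum is infinite for every $\varepsilon$), and one can check $K$ is not even a bounded operator on $L^2(\Omega)$. So the $L^2$-compactness route cannot be completed under the stated hypotheses; the ``mild regularity of $\Omega$'' you invoke is an additional assumption the proposition does not grant.

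The repair is that you never need $L^2$ or compactness: in your decomposition, $\rho_n-\rho^\star$ is bounded in $L^\infty$ by $\rho_+-\rho_-$, so it suffices to show $K\rho_n\to K\rho^\star$ strongly in $L^1(\Omega)$, and this is a pure dominated-convergence statement. For a.e.\ $x$ one has $k(x,\cdot)\in L^1(\Omega)$, so weak-$*$ convergence gives $(K\rho_n)(x)\to(K\rho^\star)(x)$ pointwise a.e., while $|(K\rho_n)(x)|\le\rho_+\int_\Omega k(x,y)\,dy$, a majorant that lies in $L^1(\Omega)$ by Fubini; hence $\|K\rho_n-K\rho^\star\|_{L^1(\Omega)}\to0$. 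Then
\begin{align*}
\Bigl|\int_\Omega(\rho_n-\rho^\star)\,K\rho_n\,dx\Bigr|
&\le(\rho_+-\rho_-)\,\|K\rho_n-K\rho^\star\|_{L^1(\Omega)}
+\Bigl|\int_\Omega(\rho_n-\rho^\star)\,K\rho^\star\,dx\Bigr|\longrightarrow0,
\end{align*}
the last term vanishing by weak-$*$ convergence because $K\rho^\star\in L^1(\Omega)$, and your second integral is handled the same way. With this substitution your argument closes using only the stated hypotheses; it is also precisely the content that the paper's own proof compresses into the one-line claim that strong $L^\infty$ continuity of $E$ yields continuity for the weak-$*$ topology.
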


\begin{proof} For every  $\rho \in A(\Omega, \rho_+, \rho_-)$, we have the lower bound  $E[\rho]\geq0$.  Using H\"older's inequality, we have that
$$
E[\rho] = \frac{1}{2}  \int_{\Omega \times \Omega}  \ k(x,y)  \ \rho(x) \ \rho(y) \ dx \ dy 
\leq \frac{1}{2} \| k \|_{L^1(\Omega \times \Omega)}  \| \rho\|_{L^\infty(\Omega)}^2
$$
and therefore, for $\rho_1, \rho_2 \in A(\Omega,\rho_+, \rho_-)$, 
$$
E[\rho_1 - \rho_2] \leq   \frac{1}{2} (\rho_+ - \rho_-) \| k \|_{L^1(\Omega \times \Omega)}   \| \rho_1 - \rho_2 \|_{L^\infty(\Omega)}. 
$$
It follows that $E$ is strongly continuous in the $L^\infty(\Omega)$ topology on $A(\Omega,\rho_+, \rho_-)$ and therefore continuous for the $weak-*$ topology. The result then follows from the weak-$*$ sequential compactness of the  admissible class $A(\Omega,\rho_+, \rho_-)$. 
\end{proof}

It is useful to define the integral operator $K\colon L^2(\Omega) \to L^2(\Omega)$, by
\begin{equation} \label{eq:HS}
(K \phi) (x) := \int_{\Omega} k(x,y)  \ \phi(y) \ dy. 
\end{equation}
Note that we can write $E[\rho] = \frac{1}{2} \langle \rho, K \rho\rangle_{L^2(\Omega)}$. The functional $E\colon L^\infty(\Omega) \to \mathbb R$ has a Fr\'echet derivative, $\delta E \colon L^\infty(\Omega) \to L(L^2(\Omega), \mathbb R) \cong L^2(\Omega)$,  given by 
\begin{equation} \label{eq:deriv}
\delta E \big|_\rho[\phi] = \Big\langle \frac{\delta E }{\delta \rho}\bigg|_\rho, \phi \Big\rangle  = \langle K \rho, \phi \rangle,  \qquad \phi \in  L^2(\Omega).
\end{equation}

\begin{prop} \label{p:bang-bang} 
Let $\Omega \subset \R^p$ and $k\colon \Omega \times \Omega \to (0,\infty)$ be a kernel function satisfying the assumptions in Section \ref{s:Intro}. 
Let  $\rho_+ \geq | \Omega|^{-1} \geq \rho_- > 0$. 
If $\rho^\star\in A(\Omega,\rho_+,\rho_-)$  is a local maximizer of $E$ on $A(\Omega, \rho_+,\rho_-)$,    then 
\begin{equation} \label{e:bang-bang}
\rho^\star(x) \in \{ \rho_- , \rho_+ \}  \quad \textrm{ for a.e. } x \in \Omega. 
\end{equation}
Furthermore,
\begin{equation} \label{e:KKT}
\rho^\star(x) = \begin{cases}
\rho_+ & \textrm{if } \  \ (K\rho^\star) (x) > \alpha^\star  \\
\rho_- & \textrm{if } \  \ (K\rho^\star) (x) < \alpha^\star 
\end{cases} 
\qquad \textrm{a.e.} \ x \in \Omega, 
\end{equation}
where $\alpha^\star$ is the  smallest value $\alpha$ such that $| \{ x \in \Omega \colon (K \rho^\star) (x) < \alpha \} |_d >  \frac{ \rho_+ - |\Omega|_d^{-1}  }{\rho_+ - \rho_-} |\Omega|_d $.
\end{prop}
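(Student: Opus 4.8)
The plan is to exploit the fact that, because $k$ is positive definite, the operator $K$ is positive and hence $E[\rho]=\tfrac12\langle\rho,K\rho\rangle$ is a \emph{strictly} convex functional. We are therefore maximizing a strictly convex functional over the convex, weak-$*$ compact set $A(\Omega,\rho_+,\rho_-)$, and the bang-bang conclusion \eqref{e:bang-bang} should be read as saying that a local maximizer must lie at an extreme point of the box constraints. The precise tool is a two-sided perturbation together with strict positive-definiteness of $K$ on $L^2(\Omega)$, namely $\langle\phi,K\phi\rangle>0$ for every $\phi\in L^2(\Omega)\setminus\{0\}$, which is the $L^2$ incarnation of the positive-definiteness hypothesis on $k$.

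For the bang-bang property, suppose for contradiction that the set $G=\{x:\rho_-<\rho^\star(x)<\rho_+\}$ has positive measure. Then for some $\varepsilon>0$ the set $G_\varepsilon=\{x:\rho_-+\varepsilon<\rho^\star(x)<\rho_+-\varepsilon\}$ has positive measure; I would choose two disjoint positive-measure subsets $S_1,S_2\subseteq G_\varepsilon$ and set $\phi=|S_1|^{-1}\mathbf 1_{S_1}-|S_2|^{-1}\mathbf 1_{S_2}$, so that $\int_\Omega\phi\,dx=0$ and $\phi\not\equiv 0$. For all sufficiently small $t>0$ both $\rho^\star\pm t\phi$ lie in $A(\Omega,\rho_+,\rho_-)$, since the mass constraint is preserved and the box constraints hold because we remained uniformly interior on $G_\varepsilon$. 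Expanding the quadratic functional gives the identity $E[\rho^\star+t\phi]+E[\rho^\star-t\phi]=2E[\rho^\star]+t^2\langle\phi,K\phi\rangle$. Local maximality forces the left-hand side to be at most $2E[\rho^\star]$ for small $t$, whence $\langle\phi,K\phi\rangle\le 0$, contradicting strict positive-definiteness. Hence $|G|=0$ and \eqref{e:bang-bang} holds.

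For the threshold characterization, write $P=\{\rho^\star=\rho_+\}$ and $M=\{\rho^\star=\rho_-\}$ and set $g:=K\rho^\star$, which by \eqref{eq:deriv} represents the derivative of $E$. The mass constraint $\int_\Omega\rho^\star\,dx=1$ together with $|P|_d+|M|_d=|\Omega|_d$ forces $|M|_d=\frac{\rho_+-|\Omega|_d^{-1}}{\rho_+-\rho_-}|\Omega|_d=:m^\star$. Now I would use one-sided, mass-transfer perturbations: for positive-measure $P'\subseteq P$ and $M'\subseteq M$, the direction $\phi=|M'|^{-1}\mathbf 1_{M'}-|P'|^{-1}\mathbf 1_{P'}$ satisfies $\int_\Omega\phi\,dx=0$ and keeps $\rho^\star+t\phi$ admissible for small $t>0$, since it only raises $\rho^\star$ off $\rho_-$ on $M'$ and lowers it off $\rho_+$ on $P'$. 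The first-order necessary condition $\lim_{t\to0^+}t^{-1}(E[\rho^\star+t\phi]-E[\rho^\star])=\langle g,\phi\rangle\le 0$ then says that the average of $g$ over any $M'$ is at most its average over any $P'$. Passing to Lebesgue points yields $\operatorname{ess\,sup}_M g\le\operatorname{ess\,inf}_P g$; calling these $a\le b$, one gets $\{g>\alpha\}\subseteq P$ and $\{g<\alpha\}\subseteq M$ for every $\alpha\in[a,b]$, which is precisely \eqref{e:KKT} for any such threshold.

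It remains to check that the explicit value $\alpha^\star=\inf\{\alpha:|\{g<\alpha\}|_d>m^\star\}$ lies in $[a,b]$. Writing $F(\alpha)=|\{g<\alpha\}|_d$, which is non-decreasing, the inclusion $\{g<a\}\subseteq M$ gives $F(a)\le m^\star$ and hence $\alpha^\star\ge a$; conversely, for every $\alpha>b$ we have $M\subseteq\{g<\alpha\}$ while $\{x\in P:g(x)<\alpha\}$ has positive measure by the definition of $\operatorname{ess\,inf}_P g=b$, so $F(\alpha)>m^\star$ and thus $\alpha^\star\le b$. Therefore $\alpha^\star\in[a,b]$ and \eqref{e:KKT} holds with this threshold. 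I expect the main obstacle to be the measure-theoretic step of converting the family of averaged inequalities into the pointwise bound $\operatorname{ess\,sup}_M g\le\operatorname{ess\,inf}_P g$ and then pinning the explicit $\alpha^\star$ into $[a,b]$, while correctly handling the measure-ambiguous level set $\{g=\alpha^\star\}$ and the degenerate cases $|P|_d=0$ or $|M|_d=0$; by contrast, the convexity-based bang-bang argument is robust, provided strict positive-definiteness of $K$ on $L^2(\Omega)$ is available.
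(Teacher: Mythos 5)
Your proof is correct, and it matches the paper's at the level of key ideas, though your threshold argument is organized differently. For \eqref{e:bang-bang} the paper notes that positive definiteness makes $E$ strictly convex and concludes that a local maximizer must be an extreme point of $A(\Omega,\rho_+,\rho_-)$; your two-sided perturbation $\rho^\star\pm t\phi$ with the identity $E[\rho^\star+t\phi]+E[\rho^\star-t\phi]=2E[\rho^\star]+t^2\langle\phi,K\phi\rangle$ is precisely the concrete form of that assertion (and, like the paper, you silently upgrade the finite-point positive-definiteness hypothesis on $k$ to strict positivity of $\langle\phi,K\phi\rangle$ for $\phi\in L^2(\Omega)\setminus\{0\}$; both proofs need this same fact). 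For \eqref{e:KKT} the paper argues by contradiction at the specific value $\alpha^\star$: if \eqref{e:KKT} fails, it asserts that there exist disjoint positive-measure sets $C_+\subset\{K\rho^\star<\alpha^\star\}\cap B_+$ and $C_-\subset\{K\rho^\star>\alpha^\star\}\cap B_-$ of equal measure, and the exchange $\rho^\star+\varepsilon(1_{C_-}-1_{C_+})$ strictly increases $E$ by the convexity gradient inequality. You instead record the first-order condition $\langle K\rho^\star,\phi\rangle\le 0$ for all exchange directions, deduce the separation $a=\operatorname{ess\,sup}_{B_-}K\rho^\star\le\operatorname{ess\,inf}_{B_+}K\rho^\star=b$ (your $M$ and $P$ are the paper's $B_-$ and $B_+$), so that \eqref{e:KKT} holds for every threshold in $[a,b]$, and then use $|B_-|_d=\frac{\rho_+-|\Omega|_d^{-1}}{\rho_+-\rho_-}|\Omega|_d$ to place $\alpha^\star$ in $[a,b]$. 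This costs more writing but buys something real: the paper's step ``then there exist $C_\pm$'' is exactly where one must check that failure of \eqref{e:KKT} yields positive measure on \emph{both} sides of $\alpha^\star$ --- a counting argument with $F(\alpha)=|\{K\rho^\star<\alpha\}|_d$ of the very kind you carry out --- so your version makes the paper's tacit step explicit, and it additionally shows the threshold may be taken anywhere in the separation gap. Two small remarks: the appeal to Lebesgue points is unnecessary, since you may simply take $M'=\{x\in B_-\colon K\rho^\star(x)>a-\epsilon\}$ and $P'=\{x\in B_+\colon K\rho^\star(x)<b+\epsilon\}$, which have positive measure by the definition of essential supremum and infimum; and the degenerate cases $|B_+|_d=0$ or $|B_-|_d=0$ that you flag are immediate, since then the bang-bang property forces $\rho^\star$ to be a.e.\ constant and \eqref{e:KKT} holds trivially.
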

\begin{proof}
 The functional $E$ satisfies  the identity 
$$
E[\theta \rho_1 + (1-\theta) \rho_2] = \theta E[\rho_1] + (1-\theta) E[\rho_2] 
- \theta (1-\theta) E[\rho_1 - \rho_2]. 
$$
Since $E$ is positive definite, for $\rho_1 \neq \rho_2$ and $\theta\in (0,1)$, we have that 
$$
E[\theta \rho_1 + (1-\theta) \rho_2] < \theta E[\rho_1] + (1-\theta) E[\rho_2], 
$$
which shows that $E$ is a strictly convex functional. It follows that only extremal points of $A(\Omega,\rho_+,\rho_-)$, which are functions of the form in \eqref{e:bang-bang}, can be local maximizers in \eqref{e:DensityFormulation}.

Assume $\rho^\star$ is a local maximum in \eqref{e:DensityFormulation} and suppose that \eqref{e:KKT} does not hold. 
Define the sets 
$$ 
B_+ = \{ x \in \Omega \colon \rho^\star(x) = \rho_+ \}
\qquad \textrm{and} \qquad  
B_- = \{ x \in \Omega \colon \rho^\star(x) = \rho_- \}, 
$$
where it is understood the equalities hold a.e.. 
 Then there exist disjoint, positive measure sets
$C_+ \subset \{ K\rho^\star < \alpha^\star \} \cap B_+$ and 
$ C_- \subset \{ K\rho^\star > \alpha^\star \} \cap B_- $ that satisfy $|C_+|_d = |C_-|_d$. 
For $\epsilon > 0$, consider the test function given by 
$$
g = \rho^\star + \varepsilon \left( 1_{C_-} - 1_{C_+} \right).  
$$ 
(Basically, we propose moving a small amount of density from $\{ K\rho^\star < \alpha \} $ to $\{ K \rho^\star > \alpha \}$, which is admissible since we are taking from $B_+$ and giving to $B_-$.) Note that $g\in A(\Omega,\rho_+,\rho_-)$ and we use strong convexity of $E$ to compute 
\begin{align*}
E[g] - E[\rho^\star] &> \langle K\rho^\star, g - \rho^\star \rangle \\
&=  \varepsilon \langle K\rho^\star, (1_{C_-} - 1_{C_+}) \rangle \\
&\geq  \varepsilon   \alpha (|C_-|_d - |C_+|_d ) \\
&= 0, 
\end{align*}
which contradicts the local optimality of $\rho^\star$. \end{proof}

The property in \eqref{e:bang-bang} that the optimal density attains  the allowed maximum and  minimum values almost everywhere  is sometimes referred to as the  ``bang-bang'' property of solutions. 
Many problems have similar structure, including problems involving the principle eigenvalue for composite materials; see \citep{krein1955certain,cox1990extremal,chanillo2008regularity}. 
%\citep{cox1990extremal2,chanillo2000symmetry,henrot2006extremum,chanillo2008weak,Osting:2012,kao2013efficient} 
We will retain the definition of the sets $B_{\pm}$ from the above proof (i.e. for an optimal density $\rho^{\star}$ for Problem \eqref{e:DensityFormulation}, the set $B_{\pm}$ will be the set where $\rho^{\star}=\rho_{\pm}$).

\subsection*{Example: constant kernel} Consider the situation when $k(x,y) = 1$.  Notice that this kernel does not satisfy the assumptions in Section \ref{s:Intro} because it is not positive definite.  Then for any $\rho \in A(\Omega,\rho_+,\rho_-)$ we compute
$$
E[\rho] =  \int_\Omega \rho(x) dx \ \int_\Omega \rho(y) dy = 1. 
$$
Thus all admissible densities have the same objective value, so the extremal solution is non-unique.

\bigskip

\subsection*{Example: delta distribution kernel} Consider $k(x,y) = \delta_0(x-y)$, where $\delta_0$ is the delta distribution. Note that this kernel does not satisfy the assumptions in Section \ref{s:Intro}. For any $\rho$ satisfying \eqref{e:bang-bang}, we compute 
$$
E[\rho] =  \int_\Omega \rho^2(x) dx 
=  \left( \rho_+^2 | \Omega_+|_d + \rho_-^2 |\Omega_- |_d \right) 
=  \rho_+ + \rho_-  - \rho_+ \rho_-|\Omega|_d. 
$$
Again all admissible densities have the same value, so the extremal solution is non-unique. 

\bigskip

\subsection{Solution of \eqref{e:DensityFormulation} for the $d$-dimensional ball} \label{s:ball} 
Let $\Omega=B_R(0)$ be the closed ball centered at $0$ with radius $R>0$. Here we use a rearrangement argument to show that the optimal density, $\rho^\star$, is spherically symmetric. The following theorem can be found in \cite[p.25]{kawohl1985rearrangements}, \cite[p.296]{friedman2010variational} and \cite[Theorem 14.8]{simon2011convexity} (see also \cite{kawohl1999symmetrization}).
 
\begin{thm}\label{t:SymRe} Let $f$, $g$, and $h$ be nonnegative functions in $\mathbb R^n$ and let $f^*$, $g^*$ $h^*$ be their spherically symmetric decreasing rearrangements, respectively. Then 
$$
\int_{\mathbb R^n} \int_{\mathbb R^n} f(x) g(y) h(x-y)  dx  dy \leq 
\int_{\mathbb R^n} \int_{\mathbb R^n} f^*(x) g^*(y) h^*(x-y)  dx  dy. 
$$
\end{thm}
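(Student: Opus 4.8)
The statement is the Riesz rearrangement inequality, so rather than treating it as a black box I will sketch the classical Steiner-symmetrization proof. The plan is to first reduce to characteristic functions. Using the layer-cake representation $f(x) = \int_0^\infty \mathbf{1}_{\{f>s\}}(x)\,ds$, and likewise for $g$ and $h$, together with Fubini's theorem and the trilinearity of the integral, the general inequality follows once it is established for $f = \mathbf{1}_A$, $g = \mathbf{1}_B$, $h = \mathbf{1}_C$ with $A,B,C\subset\mathbb{R}^n$ of finite measure; here one uses that the rearrangement of an indicator is $\mathbf{1}_{A^*}$, where $A^*$ is the centered ball of the same measure as $A$. Thus it suffices to prove
$$I(A,B,C) := \int_{\mathbb{R}^n}\int_{\mathbb{R}^n} \mathbf{1}_A(x)\,\mathbf{1}_B(y)\,\mathbf{1}_C(x-y)\,dx\,dy \;\leq\; I(A^*,B^*,C^*).$$

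Next I would establish the one-dimensional version directly, since it serves as the base case. For $n=1$ the rearranged sets are centered intervals and $I(A^*,B^*,C^*)$ is computable explicitly; the inequality $I(A,B,C)\leq I(A^*,B^*,C^*)$ for three subsets of $\mathbb{R}$ is obtained by approximating $A,B,C$ by finite unions of intervals and then showing, by a direct ``sliding'' estimate, that translating the constituent intervals toward the origin never decreases the measure of the set of pairs $(x,y)$ counted by $I$.

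The core of the argument is monotonicity of $I$ under Steiner symmetrization. Fix the hyperplane $\{x_n=0\}$ and let $S$ denote Steiner symmetrization in the $x_n$-direction, which replaces each fiber of a set along that axis by the centered interval of equal length. Writing $x=(x',s)$ and $y=(y',t)$, one has $x-y=(x'-y',\,s-t)$, so Fubini in the last coordinate expresses $I(A,B,C)$ as an integral over the transverse variables $(x',y')$ of one-dimensional functionals in $(s,t)$ of exactly the form handled in the previous step, with the fibers $A_{x'}$, $B_{y'}$, $C_{x'-y'}$ playing the roles of $A,B,C$. Applying the $1$D inequality fiberwise and integrating over $(x',y')$ gives $I(A,B,C)\leq I(SA,SB,SC)$, while $|SA|=|A|$, $|SB|=|B|$, $|SC|=|C|$.

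Finally I would pass to the limit: choosing a suitable sequence of directions so that the iterated Steiner symmetrizations of $A,B,C$ converge in measure to the centered balls $A^*,B^*,C^*$, the fiberwise monotonicity yields a nondecreasing sequence of values of $I$, and continuity of $I$ under $L^1$/measure convergence of the sets lets one take the limit to conclude $I(A,B,C)\leq I(A^*,B^*,C^*)$. The main obstacle is this last step: proving that iterated Steiner symmetrizations actually converge to the spherical rearrangement (a classical but delicate fact requiring care in the choice of directions) and that $I$ is continuous along this convergence. The $1$D base case is the secondary technical crux, since the three-set inequality on the line is not entirely trivial and underlies the fiberwise reduction. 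For the application in Section~\ref{s:ball} only the special case $f=g=\rho$ with $h=k$ radial and decreasing is needed, so there $h^*=h$ and the inequality gives $E[\rho]\leq E[\rho^\star]$ directly.
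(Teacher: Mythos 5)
You should know that the paper offers no proof of Theorem \ref{t:SymRe} at all: it is the classical Riesz rearrangement inequality, stated with citations to the books of Kawohl, Friedman, and Simon, and then used as a black box in the proof of Proposition \ref{p:ball}. Your sketch reconstructs the standard proof found in those references (and in Lieb--Loss, \emph{Analysis}, Theorem 3.7): layer-cake reduction to characteristic functions, the one-dimensional three-set sliding argument as base case, fiberwise application of the 1D result to obtain monotonicity of the functional under simultaneous Steiner symmetrization of all three sets in a common direction, and passage to the limit along a sequence of symmetrizations carrying the sets to centered balls. This outline is correct, and you rightly flag the two genuine technical cruxes --- the 1D inequality itself and the convergence of iterated Steiner symmetrizations to the spherical rearrangement, together with the (upper semi-)continuity of the functional along that convergence --- which is where essentially all the work lies; as written, your argument is a faithful roadmap of the literature's proof rather than a complete one. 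Your closing remark is also accurate and matches how the paper actually deploys the theorem: in Section \ref{s:ball} only the case $f = g = \rho$ with $h$ radial and decreasing is needed, so $h^* = h$ and the inequality specializes immediately to $E[\rho] \le E[\rho^\star]$.
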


\begin{prop} \label{p:ball} Let  $\Omega = B_R(0)$ for $R > 0$ and $k(x,y) = f(|x-y|)$, where $f$ is decreasing. The spherically decreasing density that satisfies \eqref{e:bang-bang} is optimal in \eqref{e:DensityFormulation}.   
\end{prop}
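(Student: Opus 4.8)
The plan is to reduce the optimization to a geometric problem over bang-bang sets and then settle that problem with the Riesz rearrangement inequality (Theorem \ref{t:SymRe}). Since $\Omega=B_R(0)$ is a full-dimensional centered ball, the $d$-dimensional Hausdorff measure on $\Omega$ is ordinary Lebesgue measure and Theorem \ref{t:SymRe} applies directly. By Proposition \ref{p:contExist} a global maximizer $\rho^\star$ exists, and since it is in particular a local maximizer, Proposition \ref{p:bang-bang} shows it is bang-bang, so $\rho^\star=\rho_-1_\Omega+(\rho_+-\rho_-)1_{B_+}$ for some $B_+\subseteq\Omega$, with the mass constraint $\int_\Omega\rho^\star=1$ fixing $|B_+|_d$. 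The candidate in the statement is the spherically decreasing bang-bang density $\rho^\flat=\rho_-1_\Omega+(\rho_+-\rho_-)1_{B_s(0)}$, where $B_s(0)$ is the centered ball with $|B_s(0)|_d=|B_+|_d$; since $|B_+|_d\leq|\Omega|_d$ we have $s\leq R$, so $B_s(0)\subseteq\Omega$ and $\rho^\flat$ is admissible with the correct mass. It therefore suffices to prove $E[\rho^\flat]\geq E[\rho]$ for every bang-bang $\rho$.

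Writing $u=1_\Omega$ and $a=\rho_+-\rho_-\geq0$, and extending all functions by zero to $\R^d$, I would expand
\[
2 E[\rho] = \rho_-^2 \langle u, K u \rangle + 2 \rho_- a\, \langle u, K 1_{B_+} \rangle + a^2 \langle 1_{B_+}, K 1_{B_+} \rangle,
\]
where $\langle \phi, K\psi\rangle = \int\int f(|x-y|)\phi(x)\psi(y)\,dx\,dy$ as in \eqref{eq:HS}. The first term does not depend on $B_+$, and the coefficients $2\rho_- a$ and $a^2$ are nonnegative, so maximizing $E$ over bang-bang densities of fixed mass reduces to simultaneously maximizing the cross term $\langle u, K 1_{B_+}\rangle$ and the self term $\langle 1_{B_+}, K 1_{B_+}\rangle$ over sets $B_+\subseteq\Omega$ of fixed measure.

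Both terms are handled by Theorem \ref{t:SymRe} with $h(x-y)=f(|x-y|)$. Because $f$ is decreasing, the map $z\mapsto f(|z|)$ is already spherically symmetric decreasing, so $h^*=h$; because $\Omega$ is a centered ball, $u=1_\Omega$ is its own rearrangement, $u^*=u$; and the rearrangement of $1_{B_+}$ is $1_{B_s(0)}$. Applying Theorem \ref{t:SymRe} with $(f,g)=(u,1_{B_+})$ gives $\langle u, K1_{B_+}\rangle\leq\langle u, K1_{B_s(0)}\rangle$, and with $(f,g)=(1_{B_+},1_{B_+})$ gives $\langle 1_{B_+}, K1_{B_+}\rangle\leq\langle 1_{B_s(0)}, K1_{B_s(0)}\rangle$. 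Both inequalities are saturated by $B_+=B_s(0)$, hence $E[\rho]\leq E[\rho^\flat]$. Combining this with the existence of a bang-bang global maximizer yields $E[\rho^\flat]\geq E[\rho^\star]=\sup_A E$, and since $\rho^\flat\in A(\Omega,\rho_+,\rho_-)$ this forces equality, so $\rho^\flat$ is optimal.

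The main point requiring care is the simultaneous applicability of the rearrangement inequality to both the cross and self terms: this works precisely because $\Omega$ is centered at the origin, so that $u^*=u$, and $f$ is decreasing, so that $h^*=h$, which is why the hypotheses are stated this way. A secondary check is that the rearranged ball $B_s(0)$ lies inside $\Omega$, which follows automatically from $|B_+|_d\leq|\Omega|_d$; otherwise $\rho^\flat$ would fail to be admissible.
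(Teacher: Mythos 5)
Your proof is correct, and it rests on the same key tool as the paper -- the Riesz rearrangement inequality of Theorem \ref{t:SymRe} -- but it is routed differently. The paper does not reduce to bang-bang densities at all: it takes an \emph{arbitrary} admissible $\rho$, extends it by zero to $\mathbb R^d$, and applies Theorem \ref{t:SymRe} once with both functions equal to $\rho_\Omega$, obtaining $E[\rho]\le E[\rho_\Omega^*]$ in a single stroke; the only remaining work is to check that the symmetrized density $\rho_\Omega^*$ is still admissible, which holds because $\Omega$ is a centered ball (so $|\{\rho\ge\rho_-\}|_d=|\Omega|_d$ forces $\rho_\Omega^*\ge\rho_-$ on all of $\Omega$) and trivially $\rho_\Omega^*\le\rho_+$. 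You instead first invoke Propositions \ref{p:contExist} and \ref{p:bang-bang} to restrict attention to bang-bang competitors, then split the energy into background, cross, and self terms and apply the rearrangement inequality twice to $1_{B_+}$. The trade-off: the paper's one-shot symmetrization is shorter and proves the stronger statement that rearrangement increases the energy of \emph{every} admissible density, not just bang-bang ones; your version, on the other hand, makes explicit the logical step that the paper leaves tacit -- namely that the symmetrization of a general $\rho$ is not bang-bang, so identifying the optimizer with \emph{the} spherically decreasing density satisfying \eqref{e:bang-bang} really does require the existence of a bang-bang maximizer, i.e.\ Propositions \ref{p:contExist} and \ref{p:bang-bang}. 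One small caution about hypotheses: Proposition \ref{p:bang-bang} assumes $k$ is positive definite (the Section \ref{s:Intro} assumptions), while Proposition \ref{p:ball} as stated assumes only that $f$ is decreasing; your argument therefore needs the standing assumptions of Section \ref{s:DensityFormulation} to be in force, which is how the paper intends the proposition to be read.
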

\begin{proof}
Let $\rho$ be an admissible weight, \ie, $\rho\in A(\Omega,\rho_+,\rho_-)$.  Notice that $|\{\rho\geq\rho_-\}|_d=|\Omega|_d$.

Define $\rho_{\Omega}(x):=\rho(x)1_{\Omega}(x)$ for all $x\in\mathbb R^d$ and notice that $f(|x|)$ is equal to its own spherically symmetric decreasing rearrangement.  It follows from Theorem \ref{t:SymRe} that for $\Omega = B_R(0)$, 
\begin{align*}
E[\rho] 
& =  \frac{1}{2}  \int_{\mathbb R^d \times \mathbb R^d}  \ k(x,y)  \ \rho_\Omega(x) \ \rho_\Omega(y) \ dx \ dy \\
& \leq \frac{1}{2}  \int_{\mathbb R^d \times \mathbb R^d}  \ k(x,y)  \ \rho_\Omega^*(x) \ \rho_\Omega^*(y) \ dx \ dy \\
& = \frac{1}{2}  \int_{\Omega \times \Omega}  \ k(x,y)  \ \rho^\star(x) \ \rho^\star(y) \ dx \ dy \\ 
& = E[\rho^\star]. 
\end{align*}
Notice that $\rho^\star\in A(\Omega,\rho_+,\rho_-)$ because $|\{\rho^\star\geq\rho_-\}|_d= |\{\rho\geq\rho_-\}|_d=|\Omega|_d$ and the symmetry of $\Omega$ implies $\rho^\star\geq\rho_-$ on all of $\Omega$.  It is trivial to see that $\rho^\star\leq\rho_+$ on all of $\mathbb R^d$.
\end{proof}

Taking $d=1$ so that  $\Omega$ is the unit interval, Proposition \ref{p:ball} states that an optimal density for \eqref{e:DensityFormulation} is given by  $\rho^\star = \rho_+$ on a centered interval and one can check by elementary reasoning that this is the unique maximizer as long as $\rho_->0$. This is illustrated in Figure \ref{fig:oneInterval}. Note that if $\rho_- = 0$, we wouldn't get a unique solution; the interval where $\rho^\star = \rho_+$ can be put anywhere in $\Omega$.

\begin{figure}[t]
\begin{center}

\begin{tikzpicture}[scale=1,thick]

% outside
\draw (-3,-3) rectangle (3,3);

% center
\filldraw[fill=blue!40!white, draw=black] (-1,-1) rectangle (1,1);
\draw (0,0) node {$\rho_+ \rho_+$};

%corners
\filldraw[fill=blue!10!white, draw=black] (-3,-3) rectangle (-1,-1);
\filldraw[fill=blue!10!white, draw=black] (-3,1) rectangle (-1,3);
\filldraw[fill=blue!10!white, draw=black] (1,1) rectangle (3,3);
\filldraw[fill=blue!10!white, draw=black] (1,-1) rectangle (3,-3);

\draw (2,2) node {$\rho_- \rho_-$};
\draw (2,-2) node {$\rho_- \rho_-$};
\draw (-2,2) node {$\rho_- \rho_-$};
\draw (-2,-2) node {$\rho_- \rho_-$};

%edges
\filldraw[fill=blue!20!white, draw=black] (-1,1) rectangle (1,3);
\filldraw[fill=blue!20!white, draw=black] (-1,-3) rectangle (1,-1);
\filldraw[fill=blue!20!white, draw=black] (1,-1) rectangle (3,1);
\filldraw[fill=blue!20!white, draw=black] (-3,-1) rectangle (-1,1);

\draw (0,2) node {$\rho_+ \rho_-$};
\draw (0,-2) node {$\rho_+ \rho_-$};
\draw (2,0) node {$\rho_+ \rho_-$};
\draw (-2,0) node {$\rho_+ \rho_-$};
\end{tikzpicture}

\caption{An illustration demonstrating that for the interval (one-dimensional ball), the region $\Omega_+ = \{x \colon \rho(x) = \rho_+ \}$ is given by a centered interval.}
\label{fig:oneInterval}
\end{center}
\end{figure}

\subsection{Solution of \eqref{e:DensityFormulation} for the unit sphere} \label{s:sphere} 
For the unit sphere, $\Omega=\{x\in\mathbb R^{d+1}\colon |x|=1\}$, we will show that $\rho^\star$ is equal to $\rho_+$ on a spherical cap and equal to $\rho_-$ on the compliment of this spherical cap.  By spherical cap, we mean a subset of the sphere that lies to one side of a hyperplane that intersects the sphere.  We use the following result. 
\begin{thm}[{\cite[Thm. 2]{Baernstein1976}}]\label{t:sphericalrearrangement}
Let $h$ be a nondecreasing, bounded, and measurable function on the interval $[-1,1]$.   Then for all $f,g\in L^1(\Omega)$,
\[
\int_{\Omega}\int_{\Omega}f(x)g(y)h(\langle x,y\rangle)dxdy\leq \int_{\Omega}\int_{\Omega}\tilde{f}(x)\tilde{g}(y)h(\langle x,y\rangle)dxdy,
\]
where $\tilde{f}$ and $\tilde{g}$ are the spherically increasing rearrangements of $f$ and $g$. 
\end{thm}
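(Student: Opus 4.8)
The statement is the classical Baernstein--Taylor spherical rearrangement inequality, so I would not expect a short self-contained argument; the plan is to reduce it to a purely geometric ``set'' inequality and then establish that inequality by polarization (two-point symmetrization). First I would strip away the function $h$ by a layer-cake representation: since $h$ is nondecreasing and bounded, write $h(t)=h(-1)+\int_{-1}^{1}\mathbf{1}_{\{t>s\}}\,d\mu(s)$ for a nonnegative finite Borel measure $\mu$ on $[-1,1]$. The constant term contributes $h(-1)\,\|f\|_{1}\|g\|_{1}$ to each side, and these agree because rearrangement is equimeasurable, so $\|\tilde f\|_{1}=\|f\|_{1}$. By Tonelli and bilinearity it then suffices to treat $h=\mathbf{1}_{\{\langle x,y\rangle>s\}}$ for fixed $s$. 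Applying the same Cavalieri representation to $f$ and $g$, and using that rearrangement commutes with superlevel sets, $\widetilde{\mathbf{1}_{\{f>\lambda\}}}=\mathbf{1}_{\{\tilde f>\lambda\}}$, I reduce the whole theorem to the single set inequality
\[
\int_{A}\!\int_{B}\mathbf{1}_{\{\langle x,y\rangle>s\}}\,dx\,dy\ \le\ \int_{A^{*}}\!\int_{B^{*}}\mathbf{1}_{\{\langle x,y\rangle>s\}}\,dx\,dy,
\]
where $A,B\subseteq\Omega$ are measurable and $A^{*},B^{*}$ are the spherical caps centered at a common pole $e$ with $|A^{*}|_{d}=|A|_{d}$ and $|B^{*}|_{d}=|B|_{d}$.

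\emph{Polarization step.} For a hyperplane $H\ni 0$ with unit normal $u$ chosen so that $\langle u,e\rangle\ge 0$, let $\sigma$ be the reflection across $H$, let $H^{+}=\{z:\langle z,u\rangle\ge 0\}$ be the half containing $e$, and let $A\mapsto A^{H}$ denote the two-point symmetrization that favors $H^{+}$. Polarization preserves spherical measure, so $|A^{H}|_{d}=|A|_{d}$. The key step is the polarization inequality $\int_{A}\int_{B}K\le\int_{A^{H}}\int_{B^{H}}K$ for $K(x,y)=\mathbf{1}_{\{\langle x,y\rangle>s\}}$. Writing $a=\langle x,y\rangle$ and $b=\langle x,\sigma y\rangle=\langle\sigma x,y\rangle=a-2\langle x,u\rangle\langle y,u\rangle$, one has $b\le a$ for $x,y\in H^{+}$, and hence the supermodularity $K(x,y)+K(\sigma x,\sigma y)=2\mathbf{1}_{\{a>s\}}\ge 2\mathbf{1}_{\{b>s\}}=K(\sigma x,y)+K(x,\sigma y)$. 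Because $K$ is a nondecreasing function of the inner product, this is exactly the four-point rearrangement inequality on each orbit $\{x,\sigma x\}\times\{y,\sigma y\}$, and integrating over orbits yields the claim.

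\emph{Convergence and conclusion.} Finally I would invoke the known fact (Brock--Solynin; van Schaftingen) that there is a single sequence of hyperplanes $(H_{m})_{m\ge1}$ through the origin for which the iterated polarizations of \emph{both} indicators, $\mathbf{1}_{A_{m}}$ and $\mathbf{1}_{B_{m}}$ with $A_{m}=A^{H_{1}\cdots H_{m}}$, converge in $L^{1}(\Omega)$ to $\mathbf{1}_{A^{*}}$ and $\mathbf{1}_{B^{*}}$. Along this sequence the functional $I(A,B)=\int_{A}\int_{B}K$ is nondecreasing by the previous paragraph, and it is jointly continuous for $L^{1}$ convergence since $K$ is bounded; passing to the limit gives $I(A,B)\le I(A^{*},B^{*})$, which is the set inequality above. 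Undoing the layer-cake reductions then proves the theorem.

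\emph{Main obstacle.} The genuine work is concentrated in the polarization inequality for the bilinear kernel and, above all, in producing \emph{one} polarization sequence that simultaneously symmetrizes $f$ and $g$ toward the \emph{same} pole; the layer-cake reductions are routine once $h$ is bounded and $f,g\in L^{1}$. The original proof in \cite{Baernstein1976} avoids polarization altogether, instead introducing the star function $u^{*}(\theta)=\sup\{\int_{E}f:\ E\text{ a cap of geodesic radius }\theta\}$ and exploiting a subharmonicity/comparison property on the resulting functions; I would fall back on that machinery if the simultaneous-convergence step proved troublesome.
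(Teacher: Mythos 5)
The paper does not actually prove this statement: it is imported verbatim as Theorem 2 of Baernstein--Taylor \cite{Baernstein1976} and used as a black box in the proof of Proposition \ref{p:cap}, so there is no in-paper argument to compare yours against. Judged on its own merits, your polarization route is a correct and well-recognized alternative to Baernstein and Taylor's original star-function/subharmonicity argument, and your sketch has the right skeleton: the layer-cake reduction of $h$ (valid since nondecreasing plus bounded gives a nonnegative representing measure, and jump-point ambiguities are harmless because the distribution of $\langle x,y\rangle$ is absolutely continuous), the reduction of $f,g$ to indicators of superlevel sets, the four-point inequality for the kernel $\mathbf{1}_{\{\langle x,y\rangle>s\}}$ (which holds precisely because $\langle x,\sigma y\rangle=\langle x,y\rangle-2\langle x,u\rangle\langle y,u\rangle\le\langle x,y\rangle$ on the favored half), and the limit along a polarization sequence. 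Two caveats. First, your reduction tacitly assumes $f,g\ge 0$: both the identity ``constant term $=h(-1)\,\|f\|_1\|g\|_1$'' and the Cavalieri representation $f=\int_0^\infty\mathbf{1}_{\{f>\lambda\}}\,d\lambda$ require nonnegativity, whereas the statement allows signed $f,g\in L^1(\Omega)$; this is repairable (shift by constants using the rotational invariance $\int_\Omega h(\langle x,y\rangle)\,dx\equiv\mathrm{const}$, so cross terms agree on both sides, then truncate and pass to the limit), but as written it is a gap against the stated generality --- though harmless for the paper's application, where $f,g$ are densities. Second, the genuinely hard step, producing one sequence of polarizations under which both sets converge in $L^1$ to caps about a common pole, is outsourced to Brock--Solynin/van Schaftingen; that citation is legitimate and does close the argument, but it means your proof is a reduction to another nontrivial theorem rather than self-contained, exactly as you acknowledge. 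With those two repairs made explicit, the proposal stands as a valid proof, by a different mechanism than the source the paper cites.
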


\begin{prop}\label{p:cap}
Suppose $k(x,y)=f(|x-y|)$ satisfies the assumptions in Section \ref{s:Intro}.  If $\Omega$ is the $d$-dimensional unit sphere, then the solution to \eqref{e:DensityFormulation} is given by $\rho^\star$, where $\rho^\star$ is equal to $\rho_+$ on a spherical cap and $\rho_-$ on the rest of $\Omega$.
\end{prop}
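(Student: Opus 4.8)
The plan is to mirror the rearrangement argument used for the ball in Proposition \ref{p:ball}, but now invoking the spherical rearrangement inequality of Theorem \ref{t:sphericalrearrangement} in place of Theorem \ref{t:SymRe}. The first step is to rewrite the kernel as a function of the inner product. For $x,y$ on the unit sphere we have $|x-y|^2 = 2 - 2\langle x,y\rangle$, so
$$
k(x,y) = f(|x-y|) = f\bigl(\sqrt{2-2\langle x,y\rangle}\bigr) =: h(\langle x,y\rangle),
$$
where $h(t) = f(\sqrt{2-2t})$ for $t \in [-1,1)$. Because $f$ is completely monotone it is nonincreasing, and $t \mapsto \sqrt{2-2t}$ is decreasing, so $h$ is nondecreasing on $[-1,1)$; this is exactly the monotonicity hypothesis required by Theorem \ref{t:sphericalrearrangement}.

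Second, I would establish the rearrangement inequality $E[\rho] \le E[\tilde{\rho}]$ for every admissible $\rho$, where $\tilde{\rho}$ denotes the spherically increasing rearrangement of $\rho$. The one subtlety is that $h$ need not be bounded (for instance, the Riesz kernel blows up as $t \to 1$), whereas Theorem \ref{t:sphericalrearrangement} assumes a bounded $h$. I would remove this obstruction by truncation: set $h_M = \min(h,M)$, which is bounded, nondecreasing, and measurable, apply Theorem \ref{t:sphericalrearrangement} with $f = g = \rho \in L^\infty(\Omega) \subset L^1(\Omega)$, and then let $M \to \infty$. Since $\rho,\tilde{\rho} \ge 0$ and $h_M \uparrow h$ pointwise, the integrands $\rho(x)\rho(y)h_M(\langle x,y\rangle)$ increase to $\rho(x)\rho(y)h(\langle x,y\rangle)$, so the monotone convergence theorem upgrades the inequality for $h_M$ to $E[\rho] \le E[\tilde{\rho}]$; both sides are finite because $k \in L^1(\Omega\times\Omega)$ and $\rho,\tilde{\rho} \le \rho_+$.

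Third, I would check that the rearrangement stays inside the admissible class and then invoke the bang-bang structure. The rearrangement $\tilde{\rho}$ is equimeasurable with $\rho$, so $\int_\Omega \tilde{\rho}\,dx = 1$ and $\rho_- \le \tilde{\rho} \le \rho_+$; hence $\tilde{\rho} \in A(\Omega,\rho_+,\rho_-)$. Now take a maximizer $\rho^\star$ of \eqref{e:DensityFormulation}, which exists by Proposition \ref{p:contExist}. By Proposition \ref{p:bang-bang} we have $\rho^\star \in \{\rho_-,\rho_+\}$ a.e., so $\tilde{\rho^\star}$ takes the same two values, and by definition of the spherically increasing rearrangement its super-level set $\{\tilde{\rho^\star} = \rho_+\}$ is a spherical cap and $\tilde{\rho^\star} = \rho_-$ on the complement. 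The inequality of the previous step gives $E[\tilde{\rho^\star}] \ge E[\rho^\star]$, which equals the supremum, so $\tilde{\rho^\star}$ is itself optimal and has exactly the claimed form.

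I expect the truncation/boundedness reduction to be the only genuine obstacle, together with confirming that the convention for the spherically increasing rearrangement places the region $\{\tilde{\rho^\star} = \rho_+\}$ in a single cap—which is precisely what clusters the $\rho_+$ mass and makes the energy increase. Everything else, namely the inner-product reduction, equimeasurability, and the appeal to existence and the bang-bang property, is routine.
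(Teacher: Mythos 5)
Your proposal is correct and follows essentially the same route as the paper's proof: rewrite the kernel as a nondecreasing function of $\langle x,y\rangle$, apply Theorem \ref{t:sphericalrearrangement}, and handle unbounded kernels by truncation plus monotone convergence. The extra details you supply (monotonicity of $h$ from complete monotonicity, admissibility of the rearrangement via equimeasurability, and the appeal to Propositions \ref{p:contExist} and \ref{p:bang-bang} to get the cap structure) are exactly what the paper leaves implicit in its ``immediate consequence'' step.
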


\begin{proof}
Assume first that $k(x,y)$ is bounded.
%By Proposition \ref{p:bang-bang} it suffices to identify the set $B_+ = \{x\in\Omega\colon \rho^\star(x)=\rho_+\}$.
If we write
\[
E[\rho^\star]=\frac{1}{2}\int_{\Omega\times\Omega}f(|x-y|)\rho^\star(x)\rho^\star(y)\, dx\, dy=\frac{1}{2}\int_{\Omega\times\Omega}f(\sqrt{2-2\langle x,y\rangle})\rho^\star(x)\rho^\star(y)\, dx\, dy,
\]
then the result is an immediate consequence of Theorem \ref{t:sphericalrearrangement}. 

If $k(x,y)$ is unbounded, let $f_m:=\min\{m,f\}$.  Then for any distribution $\rho$, we apply the above reasoning to show that
\[
\int_{\Omega\times\Omega}f_m(\sqrt{2-2\langle x,y\rangle})\rho(x)\rho(y)\, dx\, dy\leq \int_{\Omega\times\Omega}f_m(\sqrt{2-2\langle x,y\rangle})\rho^\star(x)\rho^\star(y)\, dx\, dy. 
\]
Taking $m\rightarrow\infty$ on both sides and applying Monotone Convergence proves the result.
\end{proof}

\subsection{Non-symmetry and non-uniqueness for solutions of  \eqref{e:DensityFormulation}} \label{s:Intervals} 
 We have already seen that the solution of  \eqref{e:DensityFormulation} does not necessarily preserve symmetries of $\Omega$ and is therefore not unique, but our counterexample required $\rho_-=0$, which is trivial in the sense that it means we were looking at the wrong set $\Omega$ (we should have been working on $B_+$).  Next we will provide an explicit example where the optimal solution does not preserve the symmetries of $\Omega$ even when $\rho_->0$. 
 
 For $b> a>0$, we consider the $d=1$ dimensional domain
$$\Omega = [-b, -a] \cup [a,b]. $$
This domain is symmetric with respect to the origin, but below we show that the optimal solution $\rho^\star$ is not symmetric for all kernels $k$. In particular, this shows that the solution to \eqref{e:DensityFormulation}  is not necessarily unique.

Consider the case with $\rho_+ = \frac{2}{3}$, $\rho_- = \frac{1}{3}$, $a=1$, $b=2$, and $k(x,y) = f(|x-y|)$, where 
$$
f(r) = \begin{cases} 2-r & r\in[0,2] \\ 0 & r > 2 \end{cases}. 
$$
This kernel is positive definite by \cite[Thm. 6.20]{Wendland_2004}. Due to the support of $f$, the integrand of the energy, the integral \eqref{e:E} vanishes on the region 
$$
\left(  [-b, -a] \times  [a,b] \right)  \ \bigcup \   \left( [a,b]   \times  [-b, -a]  \right)   \ \subset \ \Omega \times \Omega. 
$$
Therefore, the domain of integration for the energy is simply $ [-b, -a]^2 \ \cup \ [a,b]^2 \subset  \Omega^2$. By Proposition \ref{p:ball}, we know that we can take $B_+$ so that $B_+\cap[a,b]$ is an interval centered at $\frac{a+b}{2}$ and similarly for $B_+\cap[-b,-a]$.  It remains only to determine the length of those intervals, which we do by explicit calculation.
We  observe that $| B_+ |_1 = \frac{|\Omega|_1^{-1} -\rho_-}{\rho_+ - \rho_-} |\Omega|_1 = 1$. 
It follows that we can write $B_+\cap[a,b]=[\frac{a+b}{2}-t,\frac{a+b}{2}+t]$ and $B_+\cap[-b,-a]=[-b+t,-a-t]$ for some value of $t\in[0,(b-a)/2]$. 
Let $\rho_t$ denote the corresponding density. It is not difficult to show that 
$$
E[\rho_t] = \frac{185}{216} + \frac{10}{9}  \left( t - \frac{1}{4} \right)^2, \qquad t \in  [0, 1/2]. 
$$
This quadratic function takes a minimum at $1/4$ (corresponding to the symmetric solution). The maximum energy solution thus corresponds to the endpoints, $t=0$ and $t=1/2$. These correspond to taking $\rho = \rho_+$ on one interval and $\rho = \rho_-$ on the other interval.  Since both $t=0$ and $t=1/2$ attain the maximum, the maximum is not attained by a unique configuration.

\subsection{Solution of \eqref{e:DensityFormulation} in the limit $\rho_+ \to \infty$}  We consider the asymptotic limit of  \eqref{e:DensityFormulation} as $\rho_+ \to \infty$. 

\begin{prop} \label{p:weak}
Let $\Omega$ be a smooth $d$-dimensional manifold and suppose $k(x,y) = f(|x-y|)$ where $f$ is strictly decreasing and continuous on $[0,\infty)$ and $k$ satisfies the assumptions in Section \ref{s:Intro}.  Assume also that $x_0$ is the unique point in $\Omega$ that attains 
\begin{equation} \label{e:center}
\max_{y \in \Omega} \ \int_{\Omega} k(x,y) \ dx. 
\end{equation}
Then in the limit $\rho_+ \to \infty$, the unique weak-$*$ limit of optimal densities is $\rho_- + m  \delta(x_0)$, where $m = 1 - \rho_- |\Omega|_d$ (chosen such that $\int_\Omega \rho = 1$). 
\end{prop}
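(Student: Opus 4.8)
The plan is to exploit the bang--bang structure of the optimizers from Proposition~\ref{p:bang-bang} to reduce the statement to a concentration-compactness argument for a measure of fixed mass supported on a set of vanishing Hausdorff measure, and then to pin down the location of the concentration by matching a sharp energy upper bound. First I would fix, for each $\rho_+$, an optimal density $\rho^\star=\rho^\star_{\rho_+}$. By Proposition~\ref{p:bang-bang} it is bang--bang, so $\rho^\star=\rho_-+c\,\mathbf 1_{B_+}$ with $c:=\rho_+-\rho_-$, and the mass constraint $\int_\Omega\rho^\star\,dx=1$ forces $|B_+|_d=m/c$ with $m=1-\rho_-|\Omega|_d$. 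In particular the mass carried by the high-density part, $c|B_+|_d=m$, is independent of $\rho_+$, while $|B_+|_d\to 0$. Setting $g(x):=\int_\Omega k(x,y)\,dy$ (continuous on the compact $\Omega$ and, by hypothesis, with unique maximizer $x_0$, since by symmetry $\max_y\int_\Omega k(x,y)\,dx=\max_x g(x)$) and $\nu:=c\,\mathbf 1_{B_+}\,dx$ (a nonnegative measure of total mass $m$), I would expand
\[
E[\rho^\star]=\tfrac12\rho_-^2\|k\|_{L^1(\Omega\times\Omega)}+\rho_-\!\int_\Omega g\,d\nu+\tfrac12\iint k\,d\nu\,d\nu .
\]

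The key analytic input is that $f$ is continuous at $0$, so $k(x,y)=f(|x-y|)\le f(0)<\infty$ is a bounded continuous kernel. Using $k\le f(0)$ and $g\le g(x_0)$ together with $\nu(\Omega)=m$, I would first record the uniform upper bound, valid for every $\rho_+$,
\[
E[\rho^\star]\le E_\infty:=\tfrac12\rho_-^2\|k\|_{L^1(\Omega\times\Omega)}+\rho_- m\,g(x_0)+\tfrac12 f(0)m^2 .
\]
To show this bound is asymptotically sharp, I would test with $\rho_-+c\,\mathbf 1_{B}$ where $B$ is a geodesic ball about $x_0$ of measure $m/c$ (available because $\Omega$ is a smooth $d$-manifold). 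Continuity of $g$ gives $\rho_-\int_\Omega g\,d\nu\to\rho_- m\,g(x_0)$, and continuity of $f$ at $0$ (so $k\to f(0)$ as $B$ shrinks) gives $\tfrac12\iint k\,d\nu\,d\nu\to\tfrac12 f(0)m^2$; hence $\lim_{\rho_+\to\infty}E[\rho^\star]=E_\infty$.

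Finally I would identify the limit of $\nu$. Since all the $\nu$ have mass $m$ on the compact set $\Omega$, the family is weak-$*$ precompact; let $\mu\ge0$, $\mu(\Omega)=m$, be any subsequential weak-$*$ limit. As $g$ is continuous, $\int_\Omega g\,d\nu\to\int_\Omega g\,d\mu$, and because $k$ is bounded and continuous (including on the diagonal) with $\nu\otimes\nu\rightharpoonup\mu\otimes\mu$, the quadratic term converges to $\tfrac12\iint k\,d\mu\,d\mu$. Passing to the limit in the energy identity and using the previous paragraph gives
\[
E_\infty=\tfrac12\rho_-^2\|k\|_{L^1(\Omega\times\Omega)}+\rho_-\!\int_\Omega g\,d\mu+\tfrac12\iint k\,d\mu\,d\mu .
\]
Now $\int_\Omega g\,d\mu\le m\,g(x_0)$ and $\iint k\,d\mu\,d\mu\le f(0)m^2$, and since the two terms together must saturate $E_\infty$ while carrying strictly positive coefficients $\rho_->0$ and $\tfrac12$, both inequalities are equalities. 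The equality $\int_\Omega(g(x_0)-g)\,d\mu=0$ with nonnegative integrand forces $\mu$ to be supported on $\{g=g(x_0)\}$, which by uniqueness of $x_0$ equals $\{x_0\}$; hence $\mu=m\,\delta(x_0)$. As every subsequential limit equals $m\,\delta(x_0)$, the whole family converges, and $\rho^\star\,dx=\rho_-\,dx+\nu\rightharpoonup\rho_-+m\,\delta(x_0)$.

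The main obstacle is the passage to the limit in the quadratic term, namely justifying $\iint k\,d\nu\,d\nu\to\iint k\,d\mu\,d\mu$ for measures concentrating onto a point; this is precisely where continuity of $f$ at $0$—equivalently, boundedness and continuity of $k$ up to the diagonal—is indispensable, and it is also what makes $E_\infty$ finite. For a singular kernel such as the Riesz kernel both this limit and the finiteness of $E_\infty$ would break down, so the hypothesis that $f$ is continuous on all of $[0,\infty)$ cannot be dropped.
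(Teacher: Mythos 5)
Your proof is correct and follows essentially the same route as the paper's: extract a weak-$*$ limit of the form $\rho_-\,dx + m\,d\mu$, expand the energy into the constant, cross, and quadratic terms, bound the cross term by the maximal potential value at $x_0$ and the quadratic term by $f(0)m^2$ with equality only for the point mass at $x_0$, and establish sharpness by testing with shrinking balls centered at $x_0$. Your writeup merely makes explicit two steps the paper compresses (the bang--bang decomposition of the optimizers and the saturation argument via the uniform bound $E_\infty$), but the underlying argument is the same.
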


\begin{proof}
Let $\tau$ be a weak-$*$ limit point of the measures $(\rho_-\chi_{B_-}+\rho_+\chi_{B_+})dx$ as $\rho_+ \to \infty$.  It must be the case that $\tau$ is of the form $\rho_-dx+md\mu$ for some probability measure $\mu$.
We then compute 
$$
E[\rho_-dx + m  d\mu] = \rho_-^2 \int_{\Omega \times \Omega} f(|x-y|) \ dx dy 
+ 2 m \rho_- \int_{\Omega} k(x,y) \ dxd\mu(y)
+ m^2\int_{\Omega\times\Omega}k(x,y)d\mu(x)d\mu(y) . 
$$
The middle term is bounded above by a multiple of the expression in \eqref{e:center} and the far-right term is bounded above by $m^2f(0)$.  Furthermore, both bounds are attained if and only if $\mu$ is the point mass at $x_0$.  Note that by taking $B_+$ to be a small ball centered around $x_0$, we see that $\rho_- + m  \delta(x_0)$ is a weak-$*$ limit as $\rho_+\rightarrow\infty$ of densities in $A(\Omega,\rho_+,\rho_-)$.  The desired conclusion now follows from the weak-$*$ continuity of $E[\cdot]$, which is a consequence of the continuity of $f$.
\end{proof}

For the special case that $\Omega$  a $d$-dimensoinal ball, we showed in Proposition \ref{p:ball} that $B_+$ is a ball of prescribed radius centered in $\Omega$.  For other domains, it is tempting to think that as $\rho_+ \to \infty$, the optimal density might attain the value $\rho_+$ on a ball centered in $\Omega$. However, the following example shows this to be false. 

\subsection{Example: the ellipse}
We consider the $\varepsilon$-parameterized family of ellipses given by 
$$
\Omega_\varepsilon = \{  (x,y) \colon (1+\varepsilon) x^2 + (1+\varepsilon)^{-1} y^2 =1 \}. 
$$
Note that $|\Omega_\varepsilon |_2= \pi$, independent of $\varepsilon\geq0$. 
In Proposition \ref{p:ball}, we proved that there exists a value $r'$ (depending on $\rho_+$ and $\rho_-$) such that the optimal density for the ball, $\Omega_0$ is given by 
$$
\rho^\star(x) = \begin{cases} 
\rho_+ & \|x\| < r'  \\
\rho_- & \|x\| > r'. 
\end{cases}
$$ 
If $B(0,r') \subset \Omega_\varepsilon$, then this is an admissible density. 
We now ask whether it is possible for this $\rho^\star$ to be optimal for $\varepsilon \neq 0$? The optimality condition \eqref{e:KKT}  would require that a particular level set of 
\begin{align*}
K \rho^\star(x) &= \int_{\Omega_\varepsilon} f(|x-y|) \rho^\star(y)  \ dy \\
&= \int_{\Omega_0} f(|x-y|) \rho^\star(y) \ dy
+ \int_{\Omega_\varepsilon \setminus \Omega_0} f(|x-y|) \rho^\star(y) \ dy
- \int_{\Omega_0 \setminus \Omega_\varepsilon} f(|x-y|) \rho^\star(y) \ dy
\end{align*}
be independent of $\varepsilon$. But, this is false if $f$ is convex, decreasing, and positive. 
To see this, we observe that as $\varepsilon$ changes the change in the values of $K \rho^\star(x)$ at $x = (r',0)$ and $x = (0,r')$ have opposite sign. 
For $x = (r',0)$, the value of $K \rho^\star(x)$ is decreasing in $\varepsilon$ since
$$ 
\frac{f(1-r') + f(1+r')}{2} \geq f(1) \geq f \left( \sqrt{1+(r')^2} \right). 
$$
The first inequality follows  from the convexity of $f$ and the second from $f$ being decreasing. 
Similarly, for $x = (0,r')$, the value of $K \rho^\star(x)$ is increasing in $\varepsilon$. 
This shows that for all domains $\Omega_\varepsilon$ with $\varepsilon > 0$ sufficiently small, the region where $\rho^\star \equiv \rho_+$ is not a ball.  However, we believe that as $\rho_+ \to \infty$, the region where  $\rho^\star \equiv \rho_+$ converges to a shrinking ball.

\medskip

In light of the above examples and observations, we make the following conjecture.

\begin{conj}\label{c:convex}
Suppose $\Omega$ and $k$ satisfy the assumptions in Section \ref{s:Intro}.  Suppose also that $\Omega$ is convex and $\rho_->0$.  Then the optimal density for \eqref{e:DensityFormulation} is unique and $B_+$ is convex.
\end{conj}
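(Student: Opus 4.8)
The plan would be to attack the two assertions---uniqueness and convexity of $B_+$---separately, using the bang-bang characterization of Proposition \ref{p:bang-bang} as the common starting point. For the convexity claim I would first rewrite any optimal density as $\rho^\star = \rho_- + (\rho_+ - \rho_-)1_{B_+}$ and record, from the KKT conditions \eqref{e:KKT}, that $B_+$ is (up to the null level set $\{K\rho^\star = \alpha^\star\}$) the superlevel set $\{x \in \Omega \colon (K\rho^\star)(x) > \alpha^\star\}$ of the equilibrium potential $u := K\rho^\star$. Thus convexity of $B_+$ would follow from quasiconcavity of $u$ on the convex set $\Omega$. Equivalently, one may phrase the problem as the shape optimization
\begin{equation*}
\max_{B \subseteq \Omega, \ |B|_d = V} \ \left[ \rho_- \int_{B} u_0(x)\,dx + \frac{\rho_+ - \rho_-}{2} \int_{B}\int_{B} f(|x-y|)\,dx\,dy \right],
\end{equation*}
where $u_0(x) := \int_\Omega f(|x-y|)\,dy$ and $V = \frac{|\Omega|_d^{-1} - \rho_-}{\rho_+ - \rho_-}|\Omega|_d$ is the fixed volume forced by the mass constraint in \eqref{e:A}; here the attractive self-energy term favors rounded, ball-like sets while the drift term concentrates mass where the domain potential $u_0$ is large.

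The central step is then to prove that the self-consistent potential $u$ is quasiconcave. I would try a concavity maximum principle in the spirit of Korevaar and Kennington: introduce the two-point concavity deficit $c(x_1,x_2,\lambda) = u(\lambda x_1 + (1-\lambda)x_2) - \lambda u(x_1) - (1-\lambda)u(x_2)$ and attempt to exclude a negative interior minimum by exploiting the sign structure of the kernel together with the fact that $u$ solves the integral relation $u = K\rho^\star$ with $\rho^\star$ itself a superlevel set of $u$. A complementary route uses Bernstein's theorem: since $f$ is completely monotone, $f(r) = \int_0^\infty e^{-rt}\,d\mu(t)$ for a positive measure $\mu$, so the energy \eqref{e:E} decomposes as a positive superposition of exponential-kernel energies. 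One would first settle the pure exponential case $k(x,y) = e^{-t|x-y|}$---the cleanest radial decreasing kernel, closely tied to resolvent operators---and then attempt to transfer the quasiconcavity conclusion through the superposition (noting that superlevel sets of a sum are not controlled by those of the summands, which is itself an obstruction to this route).

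For uniqueness, strict convexity of $E$ (established in the proof of Proposition \ref{p:bang-bang}) shows that maximizers are extreme points of $A(\Omega,\rho_+,\rho_-)$, i.e.\ bang-bang densities, but strict convexity alone does not preclude several such extreme maximizers, as the disconnected example of Section \ref{s:Intervals} demonstrates. I would therefore try to promote the convexity conclusion to uniqueness: assuming two optimal convex bodies $B_+^1, B_+^2 \subseteq \Omega$ of equal volume $V$, use a strict rearrangement or comparison inequality on the convex domain together with $\rho_- > 0$ to show that unless $B_+^1 = B_+^2$ one of them can be strictly improved, or alternatively exhibit the KKT fixed-point map $\rho \mapsto$ (bang-bang response to $K\rho$) as a strictly monotone or contractive map when $\Omega$ is convex. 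I expect uniqueness to require the convexity result as an input rather than to be independent of it.

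The hard part is the absence of any symmetry. The ball and sphere cases (Propositions \ref{p:ball} and \ref{p:cap}) were settled by rearrangement inequalities, but on a general convex $\Omega$ there is no reflection or rotation leaving $\Omega$ invariant, so neither symmetric rearrangement nor polarization preserves admissibility, and the ellipse example already shows $B_+$ need not be a ball. Compounding this, complete monotonicity forces $f$ to be log-convex, so the standard log-concavity-preservation theorems (for instance Pr\'ekopa's theorem) do not apply to the potential $u = K\rho^\star$ once the background $\rho_- > 0$ is present; any proof of quasiconcavity must therefore exploit only the decrease and convexity of $f$ rather than log-concavity. This is the essential gap that, absent a new convexity-from-the-integral-operator argument, keeps the statement a conjecture.
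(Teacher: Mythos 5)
This statement is Conjecture~\ref{c:convex}; the paper offers no proof of it, so there is no ``paper's own proof'' to compare against, and your write-up is correctly calibrated in treating the statement as open rather than manufacturing an argument. Assessed as a research plan, your reductions are sound: writing $\rho^\star = \rho_- + (\rho_+-\rho_-)1_{B_+}$, the optimality condition \eqref{e:KKT} does exhibit $B_+$ (up to the null set $\{K\rho^\star=\alpha^\star\}$) as a superlevel set of the potential $u=K\rho^\star$, so quasiconcavity of $u$ on convex $\Omega$ would give convexity of $B_+$; your shape-optimization rewrite, with drift term $\rho_-\int_B u_0$, self-energy term $\frac{\rho_+-\rho_-}{2}\int_B\int_B f(|x-y|)\,dx\,dy$, and volume $V$ forced by the mass constraint in \eqref{e:A}, agrees with the paper's bookkeeping (compare the computation of $|B_+|_1$ in Section~\ref{s:Intervals}). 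The obstructions you name are the genuine ones: the rearrangement tools behind Propositions~\ref{p:ball} and~\ref{p:cap} require the symmetry of the ball or sphere, the ellipse example already shows $B_+$ need not be a ball, complete monotonicity makes $f$ log-convex so Pr\'ekopa-type preservation of log-concavity is unavailable, and Bernstein superposition does not transfer quasiconcavity because superlevel sets of sums are not governed by those of the summands.

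Two places where your plan is weaker than your phrasing suggests. First, the Korevaar--Kennington two-point maximum principle is machinery for second-order elliptic equations with structure conditions on the nonlinearity; here $u=K\rho^\star$ satisfies a local second-order equation essentially only for the exponential kernel in dimension one, and even there the equation reads $u'' = t^2u - c\,1_{\{u>\alpha^\star\}}$, whose right-hand side (an indicator of a superlevel set of $u$ itself) violates the structure hypotheses those theorems need. So even your ``cleanest'' base case is not actually within reach of that method as stated; what is required is a genuinely nonlocal quasiconcavity principle, which is precisely the missing ingredient. Second, your uniqueness step is conditional in a way you should make explicit: even granting that every maximizer has convex $B_+$ of volume $V$, strict convexity of $E$ only locates maximizers among extreme points of $A(\Omega,\rho_+,\rho_-)$, and the two-interval example of Section~\ref{s:Intervals} shows that distinct extreme maximizers can coexist; convexity of $\Omega$ must therefore enter the exclusion argument in an essential way that your sketch gestures at (``strict rearrangement or comparison inequality'') but does not identify. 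In short: nothing you wrote is false, your reductions are correct, and your conclusion that the statement remains open matches the paper --- but neither of your routes to quasiconcavity closes, and the conjecture stands.
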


Our next example shows that in general, convexity is not needed to deduce uniqueness of the optimal density.

\subsection{Example: the cross}  Let us consider the case when $\Omega\subset\R^2$ and is given by
\[
\Omega=\{(x,y):\sqrt{x^2+y^2}\leq1,\, xy=0\}. 
\]
This is a union of two one-dimensional manifolds with boundary. 
Let us also set $k(x,y)=f(|x-y|_{\mathcal{M}})$, where $|\cdot|_{\mathcal{M}}$ is the Manhattan metric and $f(r)$ is a decreasing convex function that is continuous on $(0,\infty)$.  Note that this kernel does not satisfy the assumptions in Section \ref{s:Intro}.  Nevertheless, the problem (\ref{e:DensityFormulation}) still makes sense for this choice of $k$, and we can find the optimizer.  We will assume $\rho_+>\rho_->0$.

Suppose $B_+^x$ is the intersection of $B_+$ with the $x$-axis and $B_+^y$ is the intersection of $B_+$ with the $y$-axis.  Invoking Theorem \ref{t:SymRe} and the fact that $f$ is decreasing, it is clear that we increase $E$ by concentrating $B_+^x$ and $B_+^y$ in centered intervals in their respective axes (we allow for the possibility that one of these intervals is empty).  It remains to figure out the length of these intervals.

 If we label $B_+^x$ as $[-t,t]_x$ and $B_+^y$ as $[-(S-t),S-t]_y$, then the energy of such a distribution can be expressed as
\begin{align*}
&2\rho_+^2\int_{-t}^{t}\int_{y}^tf(x-y)dxdy+2\rho_+^2\int_{t-S}^{S-t}\int_{y}^{S-t}f(x-y)dxdy+4\rho_+\rho_-\int_{-t}^t\int_t^1f(x-y)dxdy\\
&+4\rho_+\rho_-\int_{t-S}^{S-t}\int_{S-t}^1f(x-y)dxdy+4\rho_-^2\int_t^1\int_y^1f(x-y)dxdy+4\rho_-^2\int_{S-t}^1\int_{y}^1f(x-y)dxdy\\
&+2\rho_-^2\int_{-1}^{-t}\int_t^1f(x-y)dxdy+2\rho_-^2\int_{-1}^{t-S}\int_{S-t}^1f(x-y)dxdy+8\rho_+^2\int_{0}^t\int_{0}^{S-t}f(x+y)dxdy\\
&+8\rho_+\rho_-\int_{0}^t\int_{S-t}^1f(x+y)dxdy+8\rho_+\rho_-\int_{0}^{S-t}\int_{t}^1f(x+y)dxdy+8\rho_-^2\int_{t}^1\int_{S-t}^1f(x+y)dxdy.
\end{align*}
If we take two derivatives of this expression with respect to $t$ and simplify, we get
\begin{align*}
&(\rho_+-\rho_-)[8\rho_+(f(2t)-f(t)+f(2S-2t)-f(S-t))\\
&\quad+4\rho_-(3f(1+t)-f(1-t)+3f(1+S-t)-f(1-S+t)-2f(2t)-2f(2S-2t))],
\end{align*}
which is negative because $f$ is decreasing and convex and $0\leq t,S-t\leq1$.  From this and the symmetry of the problem, we see that energy is maximized when $B_+$ is the union of two perpendicular segments of equal length that intersect at the origin, which is the midpoint of each segment.  One can calculate that the length of these segments is determined by
\[
t=\frac{1-4\rho_-}{4(\rho_+-\rho_-)}. 
\]

If we extend this example to the union of the segments $[-1,1]$ in each of the coordinate axes in $n$-dimensions, then by examining pairs and triples of segments in the energy maximizing configuration, we see that $B_+$ for the optimal configuration is $n$ equal length segments that intersect at their midpoints, which is the origin.  Thus, in all these examples, the optimal density for \eqref{e:DensityFormulation} is unique, but $\Omega$ is not convex.

\section{Relationship between the discrete \eqref{e:DiscOpt} and continuous \eqref{e:DensityFormulation} problems} \label{s:rel}

In this section we make precise the relationship between the discrete \eqref{e:DiscOpt} and continuous \eqref{e:DensityFormulation} problems.  The first step in doing so is to make precise the relationship between $r$, $R$, $\rho_+$ and $\rho_-$.  In Proposition \ref{p:adm} and Theorem \ref{t:weak}, we have already seen constraints on $r$ and $R$ that are sufficient for there to exist admissible configurations and that every weak-$*$ limit of the empirical  measures is absolutely continuous with respect to $d$-dimensional Hausdorff measure.  Now we must be more precise.

For every $d\in\N$, define
\[
\beta_d=\frac{\pi^{d/2}}{\Gamma(d/2+1)},
\]
so that the volume of the ball of radius $r$ in $\R^d$ is $\beta_dr^d$.  Also, let $\Delta_d$ be the \emph{upper packing density of $\R^d$}, defined by
\begin{align*}
\Delta_d&:=\sup\left\{\lim_{t\rightarrow\infty}\frac{\sum_{j=1}^{\infty}|S_j\cap [-t,t]^d|_d}{(2t)^d}\right\},
\end{align*}
where the supremum is taken over all collections $\{S_j\}_{j\in\N}$ of non-overlapping spheres of unit radius in $\R^d$ such that the limit exists.  Similarly  let $\Theta_d$ be the \emph{lower covering density of $\R^d$}, defined by
\begin{align*}
\Theta_d&:=\inf\left\{\lim_{t\rightarrow\infty}\frac{\sum_{j=1}^{\infty}|S_j\cap [-t,t]^d|_d}{(2t)^d}\right\},
\end{align*}
where the infimum is taken over all collections $\{S_j\}_{j\in\N}$ of spheres of unit radius in $\R^d$ whose union is all of $\R^d$ such that the limit exists.  These packing and covering constants will be the key to establishing a relationship between the extremal problems \eqref{e:DiscOpt} and \eqref{e:DensityFormulation}.  In order to do so, we will assume (for convenience) that $\Omega$ satisfies certain regularity conditions.  To state these conditions, we must define some additional notation.  For any set $X\subset\R^p$ and any $\delta>0$, we define
\begin{align*}
Q_d(X,\delta):=\sup\left\{N\beta_d\delta^d: {N\in\N,\,{\{B_i\}_{i=1}^N\mbox{ is a collection of closed balls, $\rad(B_i)=\delta$},}\atop{\mbox{center}(B_i)\in X,B_i\cap B_j=\emptyset\mbox{ when }i\neq j}}\right\},
\end{align*}
and then define
\[
Q_d(X):=\limsup_{\delta\rightarrow0^+}Q_d(X,\delta).
\]
Similarly, we define
\begin{align*}
C_d(X,\delta):=\inf\left\{N\beta_d\delta^d: {N\in\N,\,\{B_i\}_{i=1}^N\mbox{ is a collection of closed balls}}\atop{\qquad\qquad\mbox{center}(B_i)\in X,\,\rad(B_i)=\delta,\,\,X\subseteq\bigcup_{j=1}^NB_j}\right\},
\end{align*}
and then define
\[
C_d(X):=\liminf_{\delta\rightarrow0^+}C_d(X,\delta).
\]
We will say that the set $\Omega$ is of \textit{Euclidean type} if for every open set $U\subseteq\Omega$ that satisfies $|U|_d=|\bar{U}|_d$, it holds that $Q_d(U)=Q_d(\bar{U})=|U|_d\Delta_d$ and $C_d(U)=C_d(\bar{U})=|U|_d\Theta_d$.  It is easy to verify that sets like the unit cube in $\R^d$, the the unit sphere in $\R^{d+1}$, and two tangent spheres in $\R^{d+1}$ are of Euclidean type.  Now we can state a relationship between $r$, $R$, $\rho_+$, and $\rho_-$.

\begin{prop}\label{p:precise}
Suppose $\Omega$ satisfies the hypotheses of Theorem \ref{t:weak} and is of Euclidean type.  Assume that $r$ and $R$ have been chosen so that an admissible configuration satisfying \eqref{e:DiscOpt2} and \eqref{e:DiscOpt3} exists for every sufficiently large $n\in\N$.  For each large $n\in\N$, let $X_n\subset\Omega$ be a collection having cardinality $n$ and satisfying \eqref{e:DiscOpt2} and \eqref{e:DiscOpt3}.  If $\rho_+$ and $\rho_-$ have been chosen so that $\rho_+r^d\beta_d\geq2^d\Delta_d$ and $\rho_-R^d\beta_d\leq\Theta_d$, then every weak-$*$ limit point of the the measures $\{\nu_n\}_{n\geq2}$ defined in analogy with \eqref{nun} has density in $A(\Omega,\rho_+,\rho_-)$.
\end{prop}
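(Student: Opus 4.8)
The plan is to show that any weak-$*$ limit point $\nu$ of $\{\nu_n\}$ is absolutely continuous with a density $\rho$ satisfying $\rho_-\le\rho\le\rho_+$ a.e.\ and $\int_\Omega\rho\,dx=1$. Absolute continuity is already granted by Theorem \ref{t:weak}, whose hypotheses are assumed, so $\nu=\rho\,dx$; and since each $\nu_n$ is a probability measure on the compact set $\Omega$, so is $\nu$, giving $\int_\Omega\rho\,dx=1$. Thus it suffices to establish the two-sided density bound, and for this I will prove the measure estimates $\frac{\Theta_d}{\beta_d R^d}|U_\alpha|_d \le \nu(U_\alpha)\le \frac{2^d\Delta_d}{\beta_d r^d}|U_\alpha|_d$ on each basis element $U_\alpha$ from Theorem \ref{t:weak}; the standing hypotheses $\rho_+r^d\beta_d\ge 2^d\Delta_d$ and $\rho_-R^d\beta_d\le\Theta_d$ then turn these into $\rho_-|U_\alpha|_d\le\nu(U_\alpha)\le\rho_+|U_\alpha|_d$. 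Because the $U_\alpha$ form a basis, this extends (by finite unions, the Monotone Convergence Theorem, and outer regularity, exactly as at the end of the proof of Theorem \ref{t:weak}) to $\rho_-|E|_d\le\nu(E)\le\rho_+|E|_d$ for all Borel $E$, whence $\rho_-\le\rho\le\rho_+$ a.e.\ and $\rho\in A(\Omega,\rho_+,\rho_-)$. Throughout I use that condition (i) of Theorem \ref{t:weak} forces $h_d(\partial U_\alpha)=0$, hence $|\partial U_\alpha|_d=0$ and $|U_\alpha|_d=|\bar{U}_\alpha|_d$, so that the Euclidean-type identities $Q_d(U_\alpha)=|U_\alpha|_d\Delta_d$ and $C_d(U_\alpha)=|U_\alpha|_d\Theta_d$ apply.

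For the upper bound, fix $U=U_\alpha$ and set $\delta_n=\tfrac r2 n^{-1/d}$. The separation constraint \eqref{e:DiscOpt2} makes the balls of radius $\delta_n$ centered at the $n\nu_n(U)$ points of $X_n\cap U$ pairwise disjoint with centers in $U$, so by definition of $Q_d$ we get $n\nu_n(U)\,\beta_d\delta_n^d\le Q_d(U,\delta_n)$, i.e.\ $\nu_n(U)\le Q_d(U,\delta_n)/(\beta_d(r/2)^d)$. Using $\limsup_n Q_d(U,\delta_n)\le Q_d(U)=|U|_d\Delta_d$ and the portmanteau inequality $\nu(U)\le\liminf_n\nu_n(U)$ for the open set $U$, this yields $\nu(U)\le \frac{|U|_d\Delta_d}{\beta_d(r/2)^d}=\frac{2^d\Delta_d}{\beta_d r^d}|U|_d$, as desired.

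The lower bound is the crux, and I expect the management of boundary effects to be the main obstacle, precisely because the definition of $C_d$ insists that the covering balls be centered inside the set, whereas \eqref{e:DiscOpt3} only supplies balls centered at points of $X_n$, some of which may lie just outside $\bar{U}$. Set $\delta_n=Rn^{-1/d}$ and $A_n=X_n\cap\bar{U}$, so $|A_n|=n\nu_n(\bar{U})$, and let $G_n=\{y\in\bar{U}:\mathrm{dist}(y,A_n)\le\delta_n\}$ be the part of $\bar{U}$ covered by the in-set points. Any $y\in\bar{U}\setminus G_n$ is, by \eqref{e:DiscOpt3}, within $\delta_n$ of a point of $X_n\setminus\bar{U}$, so $\mathrm{dist}(y,\Omega\setminus\bar{U})\le\delta_n$; hence $\bar{U}\setminus G_n$ decreases as $n\to\infty$ to a subset of $\bar{U}\cap\overline{\Omega\setminus\bar{U}}\subseteq\partial U$, and since $|\partial U|_d=0$ we have $|\bar{U}\setminus G_n|_d\to0$. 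Now $G_n$ is covered by $|A_n|$ balls of radius $\delta_n$ centered in $\bar{U}$; covering the remaining low-measure shell $\bar{U}\setminus G_n$ (a greedy/Vitali covering combined with the lower Ahlfors regularity of a Euclidean-type $\Omega$) takes only $o(n)$ further balls of radius $\delta_n$ centered in $\bar{U}$. Together these cover $\bar{U}$ with centers in $\bar{U}$, so $(|A_n|+o(n))\beta_d\delta_n^d\ge C_d(\bar{U},\delta_n)$, giving $\nu_n(\bar{U})\ge C_d(\bar{U},\delta_n)/(\beta_d R^d)-o(1)$. Taking $\liminf_n$ and using $\liminf_{\delta\to0^+}C_d(\bar{U},\delta)=C_d(\bar{U})=|U|_d\Theta_d$ produces $\liminf_n\nu_n(\bar{U})\ge\frac{\Theta_d}{\beta_d R^d}|U|_d$. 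Since $U$ is open, $\nu(\bar{U})\ge\limsup_n\nu_n(\bar{U})$, and as $\nu(\partial U)=0$ (shown in Theorem \ref{t:weak}) we conclude $\nu(U)=\nu(\bar{U})\ge\frac{\Theta_d}{\beta_d R^d}|U|_d$.

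The one step requiring care beyond routine estimation is the claim that the shell $\bar{U}\setminus G_n$ can be covered by $o(n)$ radius-$\delta_n$ balls centered in $\bar{U}$: keeping the constant sharp (radius $R$ rather than diameter $2R$, and the genuine factor $\Theta_d$) forbids the easy doubling trick of re-centering stray balls inside $\bar{U}$, which would cost a factor $2^d$ and break the hypothesis $\rho_-R^d\beta_d\le\Theta_d$. This is exactly where the measure-zero boundary hypothesis $h_d(\partial U_\alpha)=0$ and the regularity packaged into ``Euclidean type'' must be invoked. With the two measure estimates in hand, the passage to the a.e.\ density bounds and the verification that $\rho\in A(\Omega,\rho_+,\rho_-)$ are immediate.
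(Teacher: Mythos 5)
Your proof has the same overall skeleton as the paper's: establish $\rho_-|U|_d\le\nu(U)\le\rho_+|U|_d$ on the good open sets via the quantities $Q_d$ and $C_d$ and the Euclidean-type identities, then extend to general sets. Your upper bound is essentially identical to the paper's (disjoint balls of radius $\frac{r}{2}n^{-1/d}$ centered at $X_n\cap U$ compared against $Q_d(U,\cdot)$, plus the portmanteau inequality). The divergence is in the lower bound. The paper never works on $\bar U$ directly: it fixes $\varepsilon>0$, chooses open sets $U_{\varepsilon/3}\subset U_\varepsilon$ sandwiched between neighborhoods of $\bar U$, observes that for large $n$ the balls of radius $Rn^{-1/d}$ centered at points of $X_n\cap U_\varepsilon$ cover $U_{\varepsilon/3}$, compares with $C_d(U_{\varepsilon/3},Rn^{-1/d})$, and then sends $\varepsilon\to0$ using $\nu(\partial U)=0$. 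You instead decompose $\bar U$ into the part $G_n$ covered by balls centered at in-set points and a shell $\bar U\setminus G_n$ of vanishing measure, which you propose to cover with $o(n)$ extra balls.

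That shell-covering claim is the genuine gap. You justify it by invoking ``lower Ahlfors regularity of a Euclidean-type $\Omega$,'' but Euclidean type, as defined in the paper, only pins down the asymptotics of $Q_d$ and $C_d$ on open sets with negligible boundary; it does not assert a lower mass bound $|B(z,\rho)\cap\Omega|_d\ge c\rho^d$ for $z\in\Omega$, and neither do the hypotheses of Theorem \ref{t:weak}, which control $h_d$ and $P_d$ of boundaries only. With such a regularity assumption your patch does go through (a maximal $Rn^{-1/d}$-separated subset of the shell yields disjoint half-balls, each of $\Omega$-measure at least $cn^{-1}$, all contained in a collar of $\partial U$ whose measure tends to zero, so their number is $o(n)$), but as written you are appealing to an unstated hypothesis, so the step is not proved from what the Proposition assumes. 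To be fair, you have isolated a real subtlety: the definition of $C_d$ requires the covering balls to be centered in the set being covered, and the paper's own comparison --- balls centered at points of $X_n\cap U_\varepsilon$, possibly outside $U_{\varepsilon/3}$, measured against $C_d(U_{\varepsilon/3},Rn^{-1/d})$ --- quietly elides exactly this point; your refusal to use the re-centering trick (which would cost $2^d$ and break the hypothesis $\rho_-R^d\beta_d\le\Theta_d$) is correct. So the verdict is: same strategy as the paper, cleaner in places (your use of the closed-set portmanteau inequality together with $\nu(\partial U)=0$ fixes a sloppy inequality in the paper's write-up), but the boundary-effect estimate at the heart of your lower bound needs either an added regularity hypothesis or a derivation from the stated ones, and you have not supplied either.
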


\begin{proof}
Let $\nu$ be a weak-$*$ limit point of the measures $\{\nu_n\}_{n\geq2}$ and let $U$ be an open set in $\Omega$ that satisfies $|U|_d=|\bar{U}|_d$.  Observe that $\nu(\bar{U}\setminus U)=0$ because $\partial U$ has $d$-dimensional Hausdorff measure $0$ and by Theorem \ref{t:weak} we know that $\nu$ is mutually absolutely continuous with $d$-dimensional Hausdorff measure.

Notice that the collection of closed balls of radius $rn^{-1/d}/2$ centered at points of $X_n\cap U$ are disjoint.  Therefore,
\[
\nu(U)\leq \limsup_{n\rightarrow\infty}\nu_n(U)\leq\limsup_{n\to\infty}\frac{2^dQ_d(U,rn^{-1/d}/2)}{r^d\beta_d}\leq\frac{2^d\Delta_d|U|_d}{r^d\beta_d}\leq\rho_+|U|_d. 
\]

Now for each fixed $\varepsilon>0$, let $U_{\varepsilon}$ be an open set that satisfies $|U_{\varepsilon}|_d=|\bar{U}_{\varepsilon}|_d$, contains an $\varepsilon$-neighborhood of $\bar{U}$, and is contained in a $2\varepsilon$-neighborhood of $\bar{U}$.  Notice that the collection of closed balls of radius $Rn^{-1/d}$ centered at points of $X_n\cap U_{\varepsilon}$ cover $U_{\varepsilon/3}$ when $n$ is sufficiently large.  Therefore,
\[
\nu(U)\geq \liminf_{n\rightarrow\infty}\nu_n(U_{\varepsilon})\geq\liminf_{n\to\infty}\frac{C_d(U_{\varepsilon/3},Rn^{-1/d})}{R^d\beta_d}\geq\frac{\Theta_d|U_{\varepsilon/3}|_d}{R^d\beta_d}\geq\rho_-|U_{\varepsilon/3}|_d. 
\]
Therefore, by taking $\varepsilon\rightarrow0$ we see that
\[
\rho_-|U|_d\leq\nu(U)\leq\rho_+|U|_d, 
\]
as desired.  The result for a general open set now follows from the Monotone Convergence Theorem.
\end{proof}

Here is our main result of this section.

\begin{thm}\label{t:equal}
Suppose $\Omega$ is a $d$-dimensional smooth manifold that satisfies the hypotheses of Theorem \ref{t:weak} and is of Euclidean type and that $k$ satisfies the assumptions in Section \ref{s:Intro}.  Suppose also that $r$, $R$, $\rho_+$, and $\rho_-$ have been chosen so that admissible configurations  satisfying \eqref{e:DiscOpt2} and \eqref{e:DiscOpt3} exist for every sufficiently large $n\in\N$ and so that $\rho_+r^d\beta_d\geq2^d\Delta_d$ and $\rho_-R^d\beta_d\leq\Theta_d$.  Writing $k(x,y)=f(|x-y|)$,  assume that there is a constant $C_0$ so that 
\begin{equation}\label{cstar}
\sup_{{x\geq rn^{-1/d}}\atop{n\in\N}}\,\frac{f(x)}{f(x+\sqrt{2}Rn^{-1/d})}\leq C_0.
\end{equation}
Then
\begin{equation}\label{e:dtoc1}
\limsup_{n\to\infty} \ \{\mathrm{Problem }\,\, \eqref{e:DiscOpt} \} 
\ \ \leq \ \ 
\sup \ \{ E[\rho] \colon \rho \in A(\Omega, \rho_+, \rho_-) \}. 
\end{equation}
If we further assume that there is a sequence of $n$-point configurations $\{X_n'\}_{n=2}^{\infty}$ satisfying \eqref{e:DiscOpt2} and \eqref{e:DiscOpt3} and so that the measures $\{\nu_n'\}_{n\geq2}$ defined in analogy with \eqref{nun} converge in the weak-$*$ topology to a distribution with density $\rho^\star$ that is extremal for Problem \eqref{e:DensityFormulation}, then the inequality \eqref{e:dtoc1} is an equality with the $\limsup$ replaced by the full limit.
\end{thm}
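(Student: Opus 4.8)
The plan is to prove the upper bound \eqref{e:dtoc1} first and then obtain the reverse inequality from the additional hypothesis by a soft lower-semicontinuity argument. For \eqref{e:dtoc1} I would argue along a subsequence: pick $n_k\to\infty$ realizing $\limsup_n\{\mathrm{Problem}\,\eqref{e:DiscOpt}\}$, let $X_{n_k}$ be an extremal configuration (one exists by the upper semicontinuity of $k$), and pass to a further subsequence so that the empirical measures $\nu_{n_k}$ converge weak-$*$ to some $\nu$. By Proposition~\ref{p:precise}, $\nu$ has a density $\rho\in A(\Omega,\rho_+,\rho_-)$, so it suffices to show $\limsup_k \tfrac{1}{2n_k^2}\sum_{i\ne j}k(x_i,x_j)\le E[\rho]$. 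The obstruction to reading this off from weak-$*$ convergence is that $k$ is unbounded near the diagonal, so I would split the kernel using the truncation $k_M:=\min\{k,M\}$, which is bounded and continuous on $\Omega\times\Omega$.

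For the truncated (main) term, $\nu_{n_k}\to\nu$ weak-$*$ on the compact set $\Omega$ implies $\nu_{n_k}\otimes\nu_{n_k}\to\nu\otimes\nu$ weak-$*$, and since $k_M$ is bounded and continuous, $\tfrac{1}{n_k^2}\sum_{i,j}k_M(x_i,x_j)\to\int_{\Omega^2}k_M\,d(\nu\otimes\nu)$; the diagonal contributes $Mn_k^{-1}\to0$, so the main term tends to $\tfrac12\int_{\Omega^2}k_M\,d(\nu\otimes\nu)\le E[\rho]$ by monotonicity $k_M\le k$. The work is entirely in the remaining near-diagonal term $\tfrac{1}{2n_k^2}\sum_{i\ne j}(f(|x_i-x_j|)-M)_+$, and this is where the hypotheses enter decisively. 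Writing $D_i$ for the closed ball of radius $\tfrac r2 n^{-1/d}$ about $x_i$ intersected with $\Omega$, the separation condition \eqref{e:DiscOpt2} makes the $D_i$ pairwise disjoint, and smoothness and compactness of $\Omega$ give $|D_i|_d=(1+o(1))\beta_d(\tfrac r2 n^{-1/d})^d$ uniformly in $i$. For $i\ne j$, $w\in D_i$, $z\in D_j$ one has $|w-z|\le|x_i-x_j|+rn^{-1/d}\le|x_i-x_j|+\sqrt2 Rn^{-1/d}$, and since $|x_i-x_j|\ge rn^{-1/d}$ the bound \eqref{cstar} together with the monotonicity of $f$ yields $f(|x_i-x_j|)\le C_0\, f(|w-z|)$. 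Averaging over $w,z$, dividing by $n^2$, and using disjointness then bounds the error term by $C_0(1+o(1))(2^d/(\beta_d r^d))^2$ times $\int_{\Omega^2,\,|w-z|<\delta_M+rn^{-1/d}}f(|w-z|)\,dw\,dz$, where $\delta_M\to0$ as $M\to\infty$ satisfies $f(\delta_M)=M$. Since $f(|\cdot|)$ is integrable near the origin in $\R^d$ (the kernel is absolutely integrable), this integral tends to a quantity $\varepsilon_M\to0$ as $M\to\infty$, uniformly in $n$. Combining the two terms and letting $M\to\infty$ gives $\limsup_k\tfrac{1}{2n_k^2}\sum_{i\ne j}k\le E[\rho]\le\sup_A E$, which is \eqref{e:dtoc1}.

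For the equality, the upper bound is \eqref{e:dtoc1}, so I only need the matching lower bound $\liminf_n\{\mathrm{Problem}\,\eqref{e:DiscOpt}\}\ge E[\rho^\star]=\sup_A E$. Since each $X_n'$ is admissible, the discrete maximum dominates the energy of $X_n'$, and here lower semicontinuity works in my favor: from $k\ge k_M$ and $\nu_n'\to\rho^\star\,dx$ weak-$*$ I get $\liminf_n\tfrac{1}{n^2}\sum_{i\ne j}k(x_i',x_j')\ge\int_{\Omega^2}k_M(x,y)\rho^\star(x)\rho^\star(y)\,dx\,dy$ (the diagonal again vanishing like $Mn^{-1}$), and Monotone Convergence as $M\to\infty$ upgrades the right-hand side to $2E[\rho^\star]$. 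Hence $\liminf_n\{\mathrm{Problem}\,\eqref{e:DiscOpt}\}\ge E[\rho^\star]=\sup_A E$, and together with \eqref{e:dtoc1} the full limit exists and equals $\sup_A E$.

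I expect the single real obstacle to be the uniform near-diagonal estimate in the second paragraph: everything else is soft weak-$*$ analysis, but controlling $\tfrac{1}{n^2}\sum_{i\ne j}(f-M)_+$ uniformly in $n$ is precisely what forces both the separation constraint \eqref{e:DiscOpt2} and the decay condition \eqref{cstar}, the latter being needed to dominate the singular discrete sum by an integral of $f$ localized near the diagonal. The routine technical points I would still need to nail down are the uniform local-flatness estimates on $\Omega$ (that small Euclidean balls meet $\Omega$ in sets of $d$-measure $(1+o(1))\beta_d(\mathrm{radius})^d$ and that near-diagonal integrals over $\Omega^2$ match their $\R^d$ counterparts) and the standard fact that weak-$*$ convergence of probability measures tensorizes to the product.
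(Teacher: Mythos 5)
Your proof is correct, and it follows the paper's overall architecture (weak-$*$ compactness plus Proposition \ref{p:precise}, a splitting that isolates the near-diagonal singularity, and then \eqref{cstar} plus integrability of $k$ to kill the singular part), but the two key technical devices differ from the paper's. Where you truncate the kernel, writing $k=k_M+(k-M)_+$ with $k_M=\min\{k,M\}$, the paper instead truncates in space: it fixes a compact symmetric set $\Lambda\subset\Omega\times\Omega$ avoiding the diagonal with $|(\Omega\times\Omega)\setminus\Lambda|_d<\varepsilon$, handles the pairs in $\Lambda$ by weak-$*$ convergence, and controls the remaining pairs via Lemma \ref{possassump}, which compares $k(x_i,x_j)$ to its average over the Voronoi cell $V_{ij}$ of $(x_i,x_j)$ in $\Omega\times\Omega$; that lemma needs \emph{both} constraints, since \eqref{e:DiscOpt3} bounds the Voronoi cells' extent and \eqref{e:DiscOpt2}--\eqref{e:DiscOpt3} together give $n^2|V_{ij}|_{2d}\asymp1$. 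Your disjoint-ball comparison ($D_i\times D_j$ in place of $V_{ij}$) uses only the separation condition \eqref{e:DiscOpt2} for the error term, which is a genuine simplification: you avoid the Voronoi-cell volume estimates entirely, at the modest cost of introducing the level $\delta_M$ of $f$. For the reverse inequality the two arguments are essentially identical, except that the paper takes a supremum over all continuous $k_1$ with $0\le k_1\le k$ where you use the specific minorants $k_M$ and Monotone Convergence; both work. One small patch you need: your inequality $|x_i-x_j|+rn^{-1/d}\le|x_i-x_j|+\sqrt{2}Rn^{-1/d}$ presumes $r\le\sqrt{2}R$, which is not guaranteed by the hypotheses (admissibility only forces roughly $r\le 2R$). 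The fix is immediate: take the balls $D_i$ to have radius $\tfrac{1}{2}\min\{r,\sqrt{2}R\}n^{-1/d}$; they remain pairwise disjoint since this is at most half the separation, the lower bound $|D_i|_d\ge cn^{-1}$ survives with a different constant, and then $|w-z|\le|x_i-x_j|+\sqrt{2}Rn^{-1/d}$ holds for all $w\in D_i$, $z\in D_j$, so \eqref{cstar} and the monotonicity of $f$ apply exactly as you intended.
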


It is easy to see that for functions like $f(t)=t^{-s}$ for some $s\in(0,d)$, $f(t)=-\log(t)$, or $f(t) = e^{-t/\sigma}$ for some $\sigma>0$, there exists a constant $C_0$ such that \eqref{cstar} holds, but that it does not hold for the function $f(t)=e^{t^{-2}}$.  The reason we make the assumption \eqref{cstar} is because of the following lemma.

\begin{lem}\label{possassump}
For each $n\geq2$, suppose $X_n=\{x_j\}_{j=1}^n\subset\Omega$ is a collection satisfying \eqref{e:DiscOpt2} and \eqref{e:DiscOpt3}.  Let $V_{ij}$ be the Voronoi cell of $(x_i,x_j)\in\Omega\times\Omega$ and let $k_{ij}$ be the average value of $k$ over $V_{ij}$.  If $k(x,y)$ satisfies the assumptions in Section \ref{s:Intro}, then
\[
\sup_{i\neq j}\frac{k(x_i,x_j)}{k_{ij}}\leq \sup_{{x\geq r/n^{1/d}}\atop{n\in\N}}\,\frac{f(x)}{f(x+\sqrt{2}Rn^{-1/d})}. 
\]
\end{lem}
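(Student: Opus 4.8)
The plan is to establish the bound one pair at a time. For a fixed $n$ and fixed $i\neq j$ it suffices to show
\[
\frac{k(x_i,x_j)}{k_{ij}} \le \frac{f(|x_i-x_j|)}{f\!\left(|x_i-x_j| + \sqrt{2}\,R n^{-1/d}\right)},
\]
because the separation constraint \eqref{e:DiscOpt2} gives $|x_i-x_j|\ge r n^{-1/d}$, so the right-hand side is at most $\sup_{x\ge r n^{-1/d}} f(x)/f(x+\sqrt2 R n^{-1/d})$; enlarging the supremum to range over all $n\in\N$ and then taking $\sup_{i\neq j}$ yields the asserted inequality. To bound the denominator $k_{ij}=\frac{1}{|V_{ij}|}\int_{V_{ij}}f(|x-y|)\,dx\,dy$ from below I would use the Section~\ref{s:Intro} hypotheses: complete monotonicity makes $f$ both convex and nonincreasing on $(0,\infty)$. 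Convexity and Jensen's inequality give $k_{ij}\ge f(\bar d_{ij})$, where $\bar d_{ij}:=\frac{1}{|V_{ij}|}\int_{V_{ij}}|x-y|\,dx\,dy$ is the mean pair-distance over the cell, and since $f$ is nonincreasing the per-pair claim reduces to the purely geometric estimate $\bar d_{ij}\le |x_i-x_j|+\sqrt{2}\,R n^{-1/d}$.

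For that geometric estimate I would set $u=x-x_i$ and $v=y-x_j$, so that $|x-y|\le |x_i-x_j|+|u-v|$ and hence $\bar d_{ij}-|x_i-x_j|\le \overline{|u-v|}$, the cell-average of $|u-v|$. The covering constraint \eqref{e:DiscOpt3} controls the coordinate displacements: on the part of $V_{ij}$ where $x_i$ and $x_j$ are the nearest centers to $x$ and $y$ (the product cell $V_i\times V_j$, with $V_i$ the Voronoi cell of $x_i$ in $\Omega$), one has $|u|,|v|\le \eta(X_n)\le R n^{-1/d}$. Applying concavity of the square root (Jensen again) and expanding, $\overline{|u-v|}\le \sqrt{\overline{|u-v|^2}}=\sqrt{\overline{|u|^2}+\overline{|v|^2}-2\,\overline{u\cdot v}}$, which is at most $\sqrt{2}\,R n^{-1/d}$ once the cross term $\overline{u\cdot v}$ is nonnegative. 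This is exactly the required increment.

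The main obstacle is this last step, and with it the appearance of $\sqrt{2}$ rather than $2$. A pointwise argument is genuinely too weak: on a product cell one only gets $|x-y|\le |x_i-x_j|+2R n^{-1/d}$ (take $u,v$ of length $Rn^{-1/d}$ in opposite directions), and this increment is attained on a uniform grid. It is precisely the passage to the cell \emph{average} combined with the convexity of $f$ that recovers the factor $\sqrt{2}$, through the $\ell^2$ (Cauchy--Schwarz) combination of the two coordinate displacements rather than their sum.

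Consequently the two delicate points I would have to handle are (i) the sign and size of the cross term $\overline{u\cdot v}=(c_i-x_i)\cdot(c_j-x_j)$, where $c_i,c_j$ are the coordinate centroids of the cell, which is controlled when the centers lie near the centroids of their cells as happens for well-distributed configurations; and (ii) the cells adjacent to the diagonal $\{x=y\}$, where excluding the diagonal centers $(x_k,x_k)$ distorts the product structure and the displacement bound $|u|,|v|\le R n^{-1/d}$ can fail. On those near-diagonal regions, however, $|x-y|$ is small, so $f(|x-y|)$ is large and they only help the lower bound on $k_{ij}$; I would therefore argue that discarding or crudely bounding them does not affect the estimate, leaving the $\sqrt{2}$ to come entirely from the averaged displacement on the product part of the cell.
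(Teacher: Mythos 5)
Your reduction is sound as far as it goes: complete monotonicity does make $f$ convex and nonincreasing, Jensen does give $k_{ij}\ge f(\bar d_{ij})$, and on the product cell $V_{ij}=V_i\times V_j$ the identity $\overline{|u-v|^2}=\overline{|u|^2}+\overline{|v|^2}-2\,\bar u\cdot\bar v$ is correct. The genuine gap is exactly the step you flag and then wave away: the nonnegativity of the cross term $\bar u\cdot\bar v=(c_i-x_i)\cdot(c_j-x_j)$. Nothing in \eqref{e:DiscOpt2}--\eqref{e:DiscOpt3} places centers near the centroids of their Voronoi cells, so ``well-distributed'' is an additional hypothesis, not a consequence of admissibility. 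Already in one dimension one can crowd $x_i$ with a neighbor at distance $rn^{-1}$ on its right and leave a gap of width $\approx Rn^{-1}$ on its left, and do the mirror image at $x_j>x_i$; the configuration is admissible, yet $\bar u<0<\bar v$, so $\bar u\cdot\bar v<0$. Without the sign condition your chain only yields $\sqrt{\overline{|u|^2}+\overline{|v|^2}+2|\bar u|\,|\bar v|}$, and when $R\gg r$ the mass of a convex Voronoi cell really can concentrate at distance $\approx Rn^{-1/d}$ from its center (cone-shaped cells containing only the mandatory ball of radius $rn^{-1/d}/2$), in which case this bound degrades to $2Rn^{-1/d}$. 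So your method, made rigorous, proves the lemma with $2R$ in place of $\sqrt2 R$; it does not reach the stated constant.

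For comparison, the paper's proof is the pointwise argument you dismissed, and it is two lines: by \eqref{e:DiscOpt3} the covering radius of $\{(x_k,x_l)\}$ in $\Omega\times\Omega$ is at most $\sqrt2Rn^{-1/d}$, so every point of $V_{ij}$ lies within $\sqrt2Rn^{-1/d}$ of $(x_i,x_j)$, its distance to the diagonal exceeds that of $(x_i,x_j)$ by at most $\sqrt2Rn^{-1/d}$, and monotonicity of $f$ then bounds $k_{ij}$ from below with no averaging at all. You are right, however, that a pointwise argument controls the increment of $|x-y|$ only by $2Rn^{-1/d}$: the Euclidean distance from $(x,y)$ to the diagonal is $|x-y|/\sqrt2$, so a displacement of $\sqrt2Rn^{-1/d}$ in the product space moves $|x-y|$ by up to $2Rn^{-1/d}$, and this is attained on a grid. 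That factor-of-$\sqrt2$ bookkeeping is glossed over in the paper's proof as well. The robust statement that both the paper's argument and your repaired argument actually establish is the lemma with $2Rn^{-1/d}$, which is all that is needed downstream: assumption \eqref{cstar} with $\sqrt2R$ replaced by $2R$ is still satisfied by the Riesz, logarithmic, and exponential kernels, and Theorem \ref{t:equal} goes through verbatim. In short, your route is genuinely different (averaged Jensen--Cauchy--Schwarz versus pointwise), but it does not close the gap it was designed to close, and the sharper constant it targets is not honestly obtained by either argument.
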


\begin{proof}
Suppose $x_i\neq x_j$ in $X_n$ are given.  Then the condition \eqref{e:DiscOpt3} implies that the point in $V_{ij}$ furthest from the diagonal in $\Omega\times\Omega$ is a distance at most $|x_i-x_j|+\sqrt{2}Rn^{-1/d}$ from the diagonal.  Therefore,
\[
\frac{k(x_i,x_j)}{k_{ij}}\leq\frac{f(|x_i-x_j|)}{f(|x_i-x_j|+\sqrt{2}Rn^{-1/d})},
\]
which is upper bounded by
\[
\sup_{{x\geq r/n^{1/d}}\atop{n\in\N}}\,\frac{f(x)}{f(x+\sqrt{2}Rn^{-1/d})}.
\]
as desired.
\end{proof}

\begin{proof}[Proof of Theorem \ref{t:equal}]
For each $n\geq2$, let $X_n^*=\{x_j^*\}_{j=1}^n\subset\Omega$ be a configuration that satisfies \eqref{e:DiscOpt2} and \eqref{e:DiscOpt3} and define $\nu_n^*$ in analogy with \eqref{nun}.  Let $\calN\subseteq\N$ be a subsequence so that
the measures $\{\nu_n^*\}_{n\geq2}$ converge to a weak-$*$ limit $\nu$ as $n\to\infty$ through $\calN$.   By Proposition \ref{p:precise} the conditions \eqref{e:DiscOpt2} and \eqref{e:DiscOpt3} assure us that $\nu$ is mutually absolutely continuous with respect to $d$-dimensional Hausdorff measure on $\Omega$ and with density in $A(\Omega,\rho_+,\rho_-)$.

For any $\varepsilon>0$, let $\Lambda$ be a compact subset of $\Omega\times \Omega$ that does not intersect the diagonal of $\Omega\times\Omega$, has boundary with $d$-dimensional Hausdorff measure zero, is symmetric, and satisfies $|(\Omega\times\Omega)\setminus\Lambda|_d<\varepsilon$. We write
\begin{align}\label{lambdasum}
\frac{1}{2n^2}\sum_{i\neq j}k(x_i^*,x_j^*)&=\frac{1}{2}\sum_{(x_i,^*x_j^*)\in\Lambda}\frac{k(x_i^*,x_j^*)}{n^2}+\frac{1}{2}\sum_{{(x_i^*,x_j^*)\not\in\Lambda}\atop{i\neq j}}\frac{k(x_i^*,x_j^*)}{n^2}. 
\end{align}
The first sum on the right-hand side of \eqref{lambdasum} is equal to
\[
\frac{1}{2}\int_{\Lambda}k(x,y)d(\nu_n\times\nu_n)=\frac{1}{2}\int_{\Omega\times\Omega}1_{\Lambda}(x,y)k(x,y)d(\nu_n\times\nu_n)\rightarrow\frac{1}{2}\int_{\Lambda}k(x,y)\,d\nu(x)\,d\nu(y)
\]
as $n\to\infty$ through $\calN$.  To deal with the second sum on the right-hand side of \eqref{lambdasum}, we use Lemma \ref{possassump} to bound it from above by
\[
\frac{C_0}{2}\sum_{{(x_i^*,x_j^*)\not\in\Lambda}\atop{i\neq j}}\frac{k_{ij}}{n^2}=\frac{C_0}{2}\sum_{{(x_i^*,x_j^*)\not\in\Lambda}\atop{i\neq j}}\frac{k_{ij}}{n^2|V_{ij}|_{2d}}|V_{ij}|_{2d}. 
\]
From \eqref{e:DiscOpt2} and \eqref{e:DiscOpt3}, we know that $n^2|V_{ij}|_{2d}$ is bounded above and below by positive constants.  Therefore, we can bound this sum from above by an absolute constant multiplied by the integral of $k(x,y)$ over the union of the Voronoi cells associated to pairs $(x_i^*,x_j^*)\not\in\Lambda$. This can be made arbitrarily small by choosing $\varepsilon$ sufficiently small ($\Lambda$ large enough).  Therefore,
\begin{equation}\label{e:limsup}
\lim_{{n\to\infty}\atop{n\in\mathcal{N}}}\frac{1}{2n^2}\sum_{i\neq j}k(x_i^*,x_j^*)=\frac{1}{2}\int_{\Omega\times\Omega}k(x,y)\,d\nu(x)\,d\nu(y)\leq\sup \ \{ E[\rho] \colon \rho \in A(\Omega, \rho_+, \rho_-) \} . 
\end{equation}
Since this is true for every sequence $\{X_n^*\}_{n\geq2}$ of admissible configurations and every subsequence $\mathcal{N}$, this gives us the inequality we wanted.

To prove the reverse inequality, we assume that for each $n\geq2$ we may choose a configuration $X_n'=\{x_j'\}_{j=1}^n\subset\Omega$ as in the statement of the theorem.  Let $k_1$ be a continuous function on $\Omega\times\Omega$ satisfying $0\leq k_1\leq k$.  If $X_n^*$ is optimal for Problem \eqref{e:DiscOpt}, then we have
\begin{align*}
\frac{1}{2n^2}\sum_{i\neq j}k(x_i^*,x_j^*)&\geq\frac{1}{2n^2}\sum_{i\neq j}k(x_i',x_j')\geq\frac{1}{2n^2}\sum_{i\neq j}k_1(x_i',x_j')\\
&=\frac{1}{2}\int\int_{\Omega\times\Omega}k_1(x,y)d\nu_n'(x)d\nu_n'(y)-\frac{1}{2n^2}\sum_{j=1}^nk_1(x_j',x_j')\\
&\rightarrow\frac{1}{2}\int\int_{\Omega\times\Omega}k_1(x,y)\rho^\star(x)\rho^\star(y)\,dx\,dy
\end{align*}
as $n\to\infty$.  Taking the supremum over all such functions $k_1$ gives,
\begin{equation}\label{e:dtoc}
\liminf_{n\to\infty}\{\mbox{Problem }\, \eqref{e:DiscOpt} \} 
\ \ \geq \ \ 
\sup \ \{ E[\rho] \colon \rho \in A(\Omega, \rho_+, \rho_-) \} 
\end{equation}
Equations \eqref{e:limsup} and \eqref{e:dtoc} give the desired equality.
\end{proof}

The proof of Theorem \ref{t:equal} yields the following corollary.

\begin{cor}\label{c:weakstar}
Assume the hypotheses of Theorem \ref{t:equal}, including the existence of the sequence $\{X_n'\}_{n\geq2}$.  For each $n\in\{2,3,\ldots\}$, let $X_n^*=\{x_j^*\}_{j=1}^n\subset\Omega$ be a configuration that is optimal for Problem \eqref{e:DiscOpt} and define $\nu_n^*$ in analogy with \eqref{nun}.  Every weak-$*$ limit of the measures $\{\nu_n^*\}_{n\geq2}$ is extremal for Problem \eqref{e:DensityFormulation}.
\end{cor}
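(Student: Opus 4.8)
The plan is to observe that the corollary follows immediately from the machinery already assembled in the proof of Theorem \ref{t:equal}, essentially by reinterpreting the computation in equation \eqref{e:limsup} when it is applied to an \emph{optimal} sequence rather than an arbitrary admissible one. First I would let $X_n^*$ be optimal for Problem \eqref{e:DiscOpt} for each $n$, form the empirical measures $\nu_n^*$, and pass to a weak-$*$ convergent subsequence $\calN$ with limit $\nu$; since $\Omega$ is compact, such a subsequence exists, and by Proposition \ref{p:precise} the limit $\nu$ is mutually absolutely continuous with respect to $d$-dimensional Hausdorff measure with density $\rho_\nu \in A(\Omega,\rho_+,\rho_-)$. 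The first half of the proof of Theorem \ref{t:equal} shows that, along $\calN$,
\[
\lim_{{n\to\infty}\atop{n\in\calN}}\frac{1}{2n^2}\sum_{i\neq j}k(x_i^*,x_j^*)=E[\rho_\nu].
\]

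Next I would exploit optimality together with the equality half of Theorem \ref{t:equal}. Because $X_n^*$ is optimal for each $n$ and the hypotheses (including the existence of the sequence $\{X_n'\}_{n\geq2}$) guarantee that the full limit exists and equals the continuous supremum, we have
\[
\lim_{n\to\infty}\frac{1}{2n^2}\sum_{i\neq j}k(x_i^*,x_j^*)=\sup\{E[\rho]\colon\rho\in A(\Omega,\rho_+,\rho_-)\}.
\]
Since this holds for the full limit, it holds in particular along the subsequence $\calN$. Combining the two displays gives $E[\rho_\nu]=\sup\{E[\rho]\colon\rho\in A(\Omega,\rho_+,\rho_-)\}$, which is precisely the statement that $\rho_\nu$ is extremal for Problem \eqref{e:DensityFormulation}. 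As the convergent subsequence was arbitrary, every weak-$*$ limit point is extremal.

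The only genuine point requiring care is that the key limit from the proof of Theorem \ref{t:equal}, namely the convergence $\frac{1}{2n^2}\sum_{i\neq j}k(x_i^*,x_j^*)\to E[\rho_\nu]$ along a weak-$*$ convergent subsequence, was established there for an arbitrary admissible sequence; I would simply note that optimal configurations are in particular admissible, so that argument (the $\Lambda$-splitting of \eqref{lambdasum} together with Lemma \ref{possassump}) applies verbatim. The main subtlety to state cleanly is the interplay between subsequential and full limits: the energies of the optimal configurations converge as a \emph{full} sequence to the supremum, so their value along any subsequence is forced to agree with that supremum, which is what pins down the weak-$*$ limit as extremal. I do not expect any new estimate to be needed beyond what is already proved.
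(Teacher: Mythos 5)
Your proposal is correct and takes essentially the same approach as the paper: the paper gives no separate argument, stating only that the proof of Theorem \ref{t:equal} yields the corollary, and your write-up is exactly the intended unpacking of that remark. Specifically, you correctly combine the subsequential limit \eqref{e:limsup} (which identifies the limiting discrete energy as $E[\rho_\nu]$, where $\rho_\nu \in A(\Omega,\rho_+,\rho_-)$ is the density of the weak-$*$ limit guaranteed by Proposition \ref{p:precise}) with the full-limit equality supplied by the second half of Theorem \ref{t:equal}, forcing $E[\rho_\nu]$ to equal the supremum.
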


The only shortcoming of Theorem \ref{t:equal} is the assumption required to make the inequality \eqref{e:dtoc1} into an equality.  It is possible that such a sequence of configurations does not exist.  Indeed, the proof of Proposition \ref{p:precise} shows that if $\rho_+r^d\beta_d>2^d\Delta_d$ or $\rho_-R^d\beta_d<\Theta_d$, then no such sequence exists. However, if $\rho_+r^d\beta_d=2^d\Delta_d$ and $\rho_-R^d\beta_d=\Theta_d$ and one has certain additional information, then one can deduce the existence of the desired configurations $\{X_n'\}_{n\geq2}$.  We illustrate this with an example.

\subsection{Example: constructing $X_n'$} \label{s:DiscSphere}
Suppose $\Omega$ is of Euclidean type and $\rho_{\pm}$ have been chosen so that there is a solution to  Problem \eqref{e:DensityFormulation} for which $B_+$ is an open rectifiable\footnote{We refer the reader to \cite{borodachov07} for the definition of a rectifiable set.} set with smooth boundary satisfying $|B_+|_d=|\overline{B}_+|_d$.  Propositions \ref{p:cap} and \ref{p:ball} tell us that this is true if we take $\Omega=S^d\subset\R^{d+1}$ or $\Omega=B_1(0)\subseteq\R^{d}$.  Assume $r$ and $R$ have been chosen so that there exist configurations satisfying \eqref{e:DiscOpt2} and \eqref{e:DiscOpt3} for all large $n\in\N$ and also so that $R>2r$.  Suppose $\rho_+$ and $\rho_-$ are chosen so that $\rho_+r^d\beta_d=2^d\Delta_d$ and $\rho_-R^d\beta_d=\Theta_d$.

Fix one $\Omega_+$ satisfying $|\Omega_+|_d=|\overline{\Omega}_+|_d$ so that the density
\[
\rho^\star(x)=
\begin{cases}
\rho_+\quad & \quad x\in B_+\\
\rho_- & \quad x\in B_-=\Omega\setminus B_+
\end{cases}
\]
is extremal for Problem \eqref{e:DensityFormulation}.  For any $n\in\N$, let
\[
V_+(n):=\left\{x\in B_+:\mathrm{dist}(x,\partial B_+)\geq \frac{rn^{-1/d}}{2}\right\} 
\quad \textrm{and} \quad V_-(n):=\left\{x\in B_-:\mathrm{dist}(x,\partial B_+)\geq \frac{rn^{-1/d}}{2}\right\}. 
\]
We will also assume that for all sufficiently large $n$ and $m$, the set $V_{-}(n)$ admits an $m$-point best packing configuration that has mesh ratio equal to
\begin{equation}\label{e:mesh}
\frac{1}{2}\left(\frac{\Theta_d}{\Delta_d}\right)^{1/d}.
\end{equation}
According to \cite[Theorem 4]{bondarenko2014}, such a mesh ratio is the best one could possibly hope for, at least in an asymptotic sense.  We will discuss the practicality of this assumption later, but for now let us proceed with our construction.

For any $n\in\N$, define
\begin{equation}\label{e:nplus}
n_+=n\rho_+|B_+|_d-o(n)
\end{equation}
for a sequence $o(n)$ that we will specify later, and let $X_+(n)\subseteq V_+(n)$ be an $n_+$-point best-packing configuration of $V_+(n)$ that has mesh ratio at most $1$ (we used \cite[Theorem 1]{bondarenko2014}).  Now define $n_-:=n-n_+$ and let $X_-(n)\subseteq V_-(n)$ be an $n_-$-point best-packing configuration of $V_-(n)$ that has mesh ratio equal to the quantity in \eqref{e:mesh}.  We claim that $\{X_+(n)\cup X_-(n)\}_{n=N}^{\infty}$ (for some $N\in\N$) is a sequence of admissible configurations whose counting measures converge to the density $\rho^\star$ as $n\to\infty$.

First let us consider the admissibility of the configuration $X_+(n)\cup X_-(n)$ for large $n$.  Notice that by \cite[Equation 2.3]{borodachov07} it holds that
\begin{equation}\label{e:delr}
\delta_{n_+}(V_+(n))\sim2\left(\frac{\Delta_d}{\beta_d}\right)^{1/d}|B_+|_d^{1/d}n_+^{-1/d}\sim rn^{-1/d},
\end{equation}
where we used the assumed relationship between $\rho_+$, $\Delta_d$, $\beta_d$, and $r$.  Therefore, one may choose the sequence $o(n)$ in \eqref{e:nplus} appropriately so that $\delta_{n_+}(V_+(n))\geq rn^{-1/d}$ for all sufficiently large $n$.  Thus, the fact that $R>2r$ assures us that the set $X_+(n)$ is admissible on $\Omega\setminus V_-(n)$ (we used the assumption on the mesh ratio of $X_+(n)$ here).

Similar reasoning shows that
\[
\delta_{n_-}(V_-(n))\sim rn^{-1/d}\left(\frac{\rho_+}{\rho_-}\right)^{1/d},
\]
so $X_-(n)$ satisfies \eqref{e:DiscOpt2} on $V_-(n)$ and with enough room to spare to accommodate the additional $o(n)$ points from \eqref{e:nplus}.  To show that $X_-(n)$ satisfies \eqref{e:DiscOpt3} on $V_-(n)$, we calculate
\[
\eta(X_-(n))\leq\frac{1}{2}\left(\frac{\Theta_d}{\Delta_d}\right)^{1/d}\delta(X_-(n))=\frac{1}{2}\left(\frac{\Theta_d}{\Delta_d}\right)^{1/d}rn^{-1/d}\left(\frac{\rho_+}{\rho_-}\right)^{1/d}\sim Rn^{-1/d}. 
\]
Therefore, if we choose the sequence $o(n)$ in \eqref{e:nplus} appropriately, it will be true that $\eta(X_-(n))<Rn^{-1/d}$ for all large $n$.  We conclude that $X_+(n)\cup X_-(n)$ is admissible when $n$ is large.

It remains to consider the weak-$*$ limits of the counting measures.  It is clear by construction that any weak-$*$ limit $\nu$ satisfies $\nu(B_+)=\rho_+|B_+|_d$ and $\nu(B_-)=\rho_-|B_-|_d$. Thus it suffices to show that $\nu$ is uniform on $B_+$ and $B_-$.  This follows from \cite[Theorem 2.2]{borodachov07}.

\medskip

In the previous example, the assumption that $V_-(n)$ admits best packing configurations with mesh ratio \eqref{e:mesh} was essential in calculating the covering radius of the set $X_-(n)$ in $V_-(n)$.  The proof of \cite[Theorem 4]{bondarenko2014} shows that in general, one cannot hope to find a configuration with a smaller mesh ratio than \eqref{e:mesh} and in fact \cite[Theorem 5]{bondarenko2014} shows that subtleties arise even when considering nice sets like $S^2$.  However, if $d=2$, the fact that the best packing configuration in $\R^2$ and the best covering configuration in $\R^2$ are both given by the vertices of the equilateral triangle lattice, one could hope to attain the bound \eqref{e:mesh} in situations when $d=2$ and $B_+$ has a sufficiently regular boundary.  In general, it is difficult to prove such regularity results on the boundary of $B_+$, but we will return to this topic in Section \ref{s:ex} with some computational examples that suggest this phenomenon occurs quite often.

\subsection{Example: the interval, $[-1,1]$}  
In the case of  the interval $[-1,1]$, many of the quantities that we have so far discussed abstractly can be made explicit.  In this setting, an admissible configuration exists for all $n$ if and only if $r\leq2$ and $1\leq R$.  To give us some flexibility in our configurations, let us suppose that both of these inequalities are strict.  The extremal density $\rho^\star$ is equal to $r^{-1}$ on an interval of length $\frac{2 r (R-1) }{2R-r}$ centered around $0$ and equal to $(2R)^{-1}$ on the remainder of the interval. 

To gain some insight into what an optimal solution to (\ref{e:DiscOpt}) looks like, consider the case $n=4$, where the optimal solution can be computed explicitly. Indeed, assume $k(x,y)=f(|x-y|)$ for some decreasing and convex function $f$ that is continuous on $(0,\infty)$.  In this case, the covering bound involving $R$ implies that no two nearest neighbors can have separation exceeding $2R/n$, so the existence of an admissible configuration requires $r\leq8/3$ and $R\geq1$.  Suppose $x_1<x_2<x_3<x_4$ and define
\[
\tau:=\min\{|x_1-x_2|,|x_2-x_3|,|x_3-x_4|\}. 
\]
By symmetry, we may assume without loss of generality that $|x_1-x_2|\leq|x_3-x_4|$.  Suppose that $|x_2-x_3|>\tau$.  Then we must have $|x_1-x_2|=\tau$.  However, reflecting $x_2$ about the point $(x_1+x_3)/2$ gives us a new configuration, where the collection of pairwise distances between the points is the same, except $|x_2-x_4|$ has decreased.  Thus, this is an energy increasing transformation and so in the extremal configuration, we must have $|x_2-x_3|=\tau$.

If $8\leq3r+2R$, then one can check that there is an admissible configuration with all nearest neighbor distances equal to $\tau=r/n$.  If $8>3r+2R$, then we can increase the energy by sliding the points $x_1$ and $x_4$ towards $0$ (and moving $x_2$ and $x_3$ accordingly) so the extremal configuration satisfies $x_1=-1+R/4$ and $x_4=1-R/4$.  All that remains is to determine $|x_1-x_2|$ and $|x_3-x_2|$.

Recall we are assuming that $|x_1-x_2|\leq|x_3-x_4|$.  Suppose this inequality is strict.  If we slide $x_2$ and $x_3$ toward $x_1$ by an amount $\varepsilon>0$ that is very small, then the total change in the energy is
\begin{align*}
& f(x_2-x_1-\varepsilon)+f(x_2-x_1+\tau-\varepsilon)+f(x_4-x_3+\varepsilon)+f(x_4-x_3-\tau+\varepsilon)\\
&\qquad\qquad -f(x_2-x_1)-f(x_2-x_1+\tau)-f(x_4-x_3)-f(x_4-x_3-\tau)\\
&=[f(x_2-x_1-\varepsilon)+f(x_4-x_3+\varepsilon)-f(x_4-x_3)-f(x_2-x_1)]\\
&\qquad\qquad+[f(x_2-x_1+\tau-\varepsilon)+f(x_4-x_3-\tau+\varepsilon)-f(x_2-x_1+\tau)-f(x_4-x_3-\tau)]\\
&\geq\varepsilon[|f'(x_2-x_1)|+|f'(x_2-x_1+\tau)|-|f'(x_4-x_3)|-|f'(x_4-x_3+\tau)|]>0.
\end{align*}
Therefore, if $|x_1-x_2|<|x_3-x_4|$, then we can increase the energy by sliding $x_2$ and $x_3$ closer to $x_1$.  If $|x_1-x_2|=|x_3-x_4|$, then the convexity of $f$ shows that sliding $x_2$ and $x_3$ toward $x_1$ by a small and equal amount is an energy increasing move.  We conclude that to maximize the energy, $x_2$ should be as close to $x_1$ as possible within the constraint of admissibility.

Using these ideas, we can determine an optimal configuration for every choice of $r,R$ satisfying $r\leq8/3$ and $R\geq1$, which we summarize with the following results.

\begin{figure}[t!]
\begin{center}
\includegraphics[width=.7\textwidth]{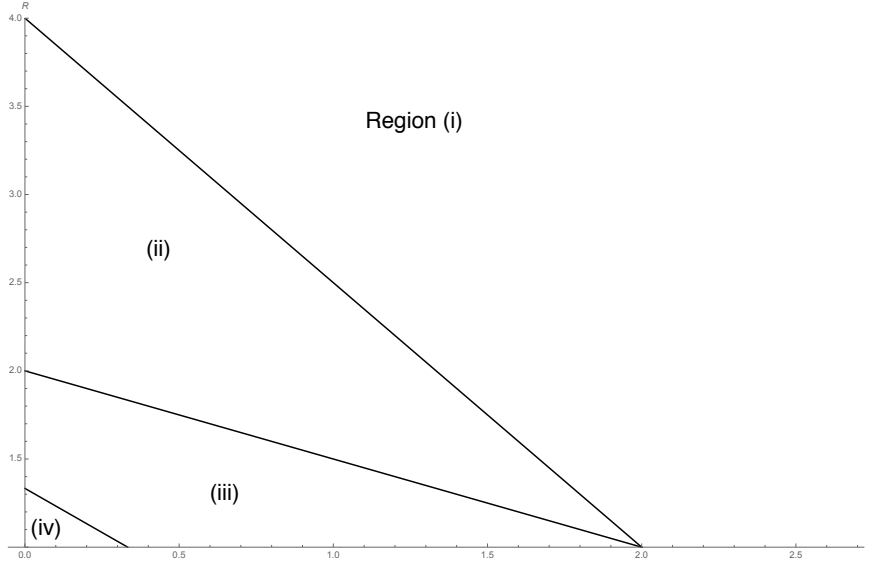}
\caption{The regions in the four parts of Theorem \ref{t:interval4}. Examples from the four cases are further illustrated in Figure \ref{f:RA1}. }
\label{f:RA0}
\end{center}
\end{figure}

\begin{figure}[h!]
\begin{center}
\includegraphics[width=.9\textwidth]{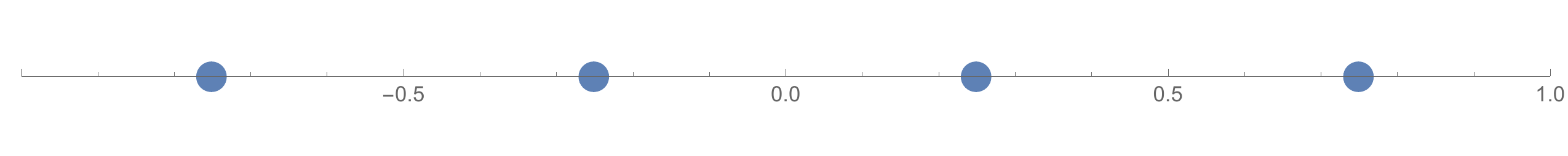}
\includegraphics[width=.9\textwidth]{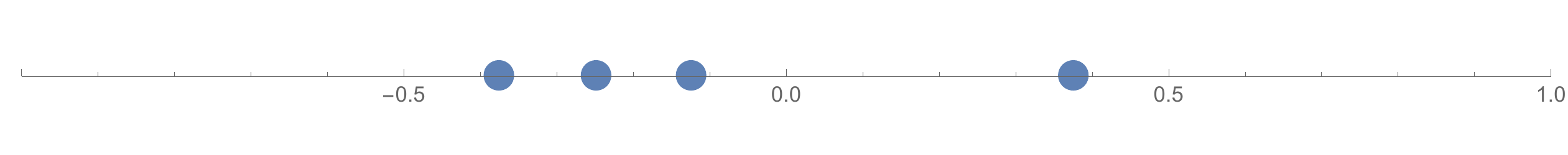}
\includegraphics[width=.9\textwidth]{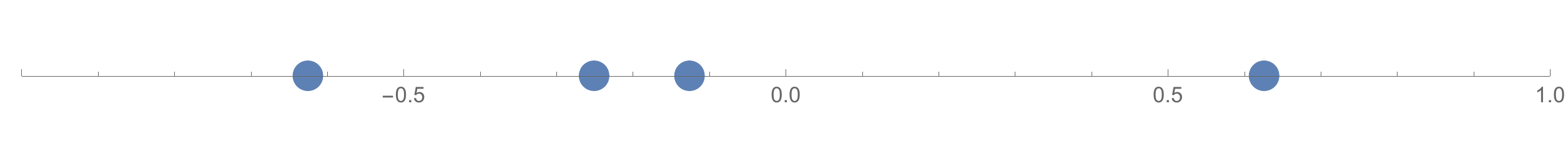}
\includegraphics[width=.9\textwidth]{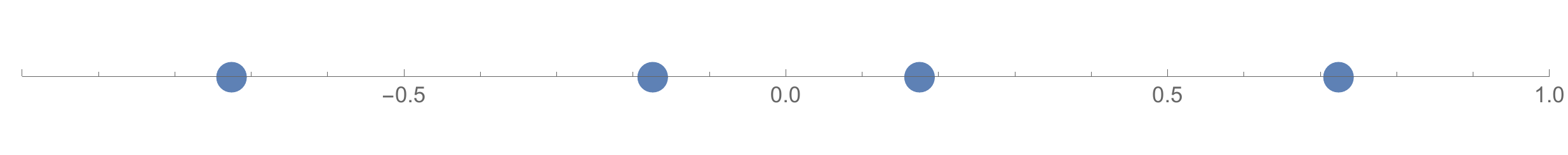}
\caption{An optimal point configuration for the following cases from Theorem \ref{t:interval4}, as illustrated in Figure \ref{f:RA0}: 
(i) $r=2=R$, 
(ii) $r=1/2$ and $R=5/2$, 
(iii) $r=1/2$ and $R=3/2$, and 
(iv)  $r=0.1$ and $R=1.1$.}
\label{f:RA1}
\end{center}
\end{figure}

\begin{thm}\label{t:interval4}
Suppose $\Omega=[-1,1]$ and $k(x,y)=f(|x-y|)$ for some completely monotone function $f$.  If $n=4$, then an admissible configuration exists if and only if $r\leq8/3$ and $R\geq1$.  In that case, the extremal configurations $\{x_1<x_2<x_3<x_4\}$ are given by
\begin{itemize}
\item[i)]  If $8\leq 3r+2R$, then $|x_1-x_2|=|x_2-x_3|=|x_3-x_4|=r/n$ and $x_1\leq-1+R/4$ and $x_4\geq1-R/4$
\item[ii)]  If $2r+4R\geq8>3r+2R$, then $x_1=-1+R/4$ and $x_4=1-R/4$ and $x_3-x_2=x_2-x_1=r/4$
\item[iii)]  If $8>3r+2R$ and $6R+r\geq8>2r+4R$, then $x_1=-1+R/4$ and $x_4=1-R/4$ and $x_4-x_3=2R/4$ and $x_3-x_2=r/4$
\item[iv)]  If $8>\max\{3r+2R,6R+r,2r+4R\}$, then $x_1=-1+R/4$ and $x_4=1-R/4$ and $x_4-x_3=x_2-x_1=2R/4$
\end{itemize}
\end{thm}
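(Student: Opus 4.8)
\emph{Proposal.} The plan is to exploit translation invariance to collapse the whole problem to a three-variable optimization over the consecutive gaps, and then to read off the four cases as different active-constraint patterns. Writing the sorted points as $x_1<x_2<x_3<x_4$, set the gaps $g_1=x_2-x_1$, $g_2=x_3-x_2$, $g_3=x_4-x_3$ and the end slacks $a=x_1+1$, $b=1-x_4$, so that $a+b+g_1+g_2+g_3=2$ with $a,b\ge0$. Since the energy $\frac{1}{2n^2}\sum_{i\ne j}k(x_i,x_j)$ depends only on the six pairwise distances $g_1,g_2,g_3,g_1+g_2,g_2+g_3,g_1+g_2+g_3$, it is a function $G(g_1,g_2,g_3)$ of the gaps alone. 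The constraints translate cleanly: the separation bound \eqref{e:DiscOpt2} becomes $g_i\ge r/4$ for each $i$; the covering bound \eqref{e:DiscOpt3} becomes $a\le R/4$, $b\le R/4$ (to cover the endpoints $\pm1$) together with $g_i\le R/2$ for each $i$ (to cover the midpoint of each gap, which is the interior point farthest from both neighbors).

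First I would settle admissibility. Feasibility is exactly the statement that the hyperplane $\{a+b+g_1+g_2+g_3=2\}$ meets the box $[0,R/4]^2\times[r/4,R/2]^3$. Per-coordinate nonemptiness forces $r/4\le R/2$, and the hyperplane meets the box iff the attainable total sum ranges over an interval containing $2$, i.e. iff $3r/4\le2\le2R$; the first is the length bound $r\le8/3$ and the second is the covering bound $R\ge1$. Necessity I would argue directly (three gaps each at least $r/4$ must fit in an interval of length $2$, and four covering balls of radius $R/4$ can cover at most length $2R$), and sufficiency by exhibiting an explicit feasible gap/slack vector. Some care is needed at the corners of the parameter region to keep $r\le8/3$, $R\ge1$, and the per-gap compatibility $r\le2R$ simultaneously in force.

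With admissibility in hand, the classification reduces to maximizing $G$ over the polytope above. The governing heuristic is that, because $f$ is decreasing, $G$ grows as the pairwise distances shrink, so one wants the gaps as small as the constraints permit; convexity of $f$ then dictates how a fixed amount of ``spread'' must be distributed. I would make this rigorous using the two moves already established in the preceding discussion: the reflection of $x_2$ about $(x_1+x_3)/2$, which shows that in any maximizer the middle gap is smallest, $g_2=\min\{g_1,g_2,g_3\}$; and the explicit sliding computation, which shows that with the span $g_1+g_2+g_3$ held fixed the energy strictly increases when the configuration is made more asymmetric by pushing $x_2,x_3$ toward the nearer end (after normalizing $g_1\le g_3$). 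Together these force a maximizer to a vertex of the feasible polytope: $g_2$ is driven to its lower bound, and $g_1$ is driven down until it hits $r/4$, hits the cap forced by $g_3\le R/2$, or meets $g_3$.

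The case split then records which constraint halts this descent, and this is where I expect the real work to lie. If the fully packed configuration $g_1=g_2=g_3=r/4$ already covers $\Omega$ — equivalently $3r/4\ge2-R/2$, i.e. $3r+2R\ge8$ — it simultaneously minimizes every argument of $G$ and we are in case (i), with the cluster free to sit anywhere meeting the endpoint constraints. Otherwise the outer points are pushed inward until covering at $\pm1$ binds, pinning $x_1=-1+R/4$ and $x_4=1-R/4$ and freezing the span at $2-R/2$; with $g_2=r/4$, the remaining split of $g_1,g_3$ is decided by convexity, so that $g_1$ either falls to $r/4$ (case (ii), boundary $2r+4R=8$), is truncated at $2-R-r/4$ by the cap $g_3=R/2$ (case (iii)), or both outer gaps saturate at $R/2$, forcing $g_2=2-3R/2$ (case (iv)). Verifying that each claimed vertex is the \emph{global} maximizer on its parameter cell — in particular ruling out the competing symmetric vertex purely by convexity, and tracking the three boundary surfaces $3r+2R=8$, $2r+4R=8$, $6R+r=8$ consistently — is the main obstacle; a secondary point is that for a general completely monotone $f$ the inequalities are only weak, so I would either assume strict monotonicity and convexity to recover the stated configurations uniquely, or phrase the conclusion as identifying one optimal representative in each cell.
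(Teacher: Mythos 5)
Your plan is, at its core, the same argument the paper gives: the paper's ``proof'' of Theorem \ref{t:interval4} is exactly the discussion preceding it, whose two engines --- the reflection of $x_2$ about $(x_1+x_3)/2$ (forcing the middle gap to be the minimal one) and the $\varepsilon$-sliding computation (pinning $x_1,x_4$ at $\mp(1-R/4)$ once $8>3r+2R$ and pushing $x_2,x_3$ toward the nearer end) --- are precisely the two moves you invoke, followed by the same bookkeeping of which constraints bind in each of the four cells. So in substance you have reproduced the paper's route, and your case descriptions (including the correct observation that in case (iv) the middle gap is $2-3R/2>r/4$, so separation is inactive there) match the theorem.

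Two things in your packaging are nevertheless worth recording. First, the gap/slack polytope view of admissibility is sharper than the paper's treatment and exposes a genuine flaw in the statement: nonemptiness of the box $[0,R/4]^2\times[r/4,R/2]^3$ requires $r\le 2R$, and this is \emph{not} implied by $r\le 8/3$ and $R\ge 1$. For instance, $(r,R)=(5/2,\,11/10)$ satisfies $r\le 8/3$, $R\ge 1$, and even $3r+2R\ge 8$, yet every configuration with all gaps $\ge r/4=5/8$ leaves some gap midpoint at distance $\ge 5/16 > R/4$ from the nearest point, so \eqref{e:DiscOpt3} fails and no admissible configuration exists; the ``if and only if'' in Theorem \ref{t:interval4}, and the paper's claim that for $3r+2R\ge 8$ the equally spaced configuration is admissible, are false on the triangle $\{r\le 8/3,\ R\ge 1,\ r>2R\}$. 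Your ``care at the corners'' remark is exactly the needed correction. Second, the vertex-reduction you gesture at can be closed cleanly without further sliding arguments: once the span is pinned at $2-R/2$, writing $g_2=S-g_1-g_3$ shows every energy term is $f$ composed with an affine function of $(g_1,g_3)$, hence the objective is convex on the feasible polygon and is maximized at one of its (few) vertices, which are then compared directly. Finally, your worry that a general completely monotone $f$ yields only weak inequalities is largely moot: by Bernstein's theorem a non-constant completely monotone function is strictly decreasing and strictly convex on $(0,\infty)$, so the paper's strict inequalities hold; only the constant kernel degenerates, and that is excluded by the positive-definiteness standing assumption (though, admittedly, not by the theorem's literal hypothesis).
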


The regions in the four cases of Theorem \ref{t:interval4} are illustrated in Figure \ref{f:RA0}.   In Figure \ref{f:RA1}, we illustrate an optimal configuration for a choice of $r,R$ for each of the four cases in Theorem \ref{t:interval4}. Note that the optimal configurations for regions (ii) and (iii) break the symmetry of the interval.

\begin{rem} \label{r:HigherDim}
The higher dimensional case, \ie, \eqref{e:DiscOpt} for $d\geq2$ dimensional ball is more difficult. From Proposition \ref{p:ball}, intuitively we should pack the points as close as possible in the center of the ball. In two dimensions, this would be a triangular packing with a spacing given by $rn^{-1/d}$ with $d=2$. Away from the center region, we should put the points at the centers of an optimal covering where the  spacing is given by $Rn^{-1/d}$. Of course, these two configurations won't agree perfectly at the interface (``geometric frustration''), but we expect this gives an approximate solution in the limit as $n\to \infty$.
\end{rem}

\section{A computational method for \eqref{e:DensityFormulation}} \label{s:ex} 
The implicit relationship in  Proposition \ref{p:bang-bang} that characterizes the optimal density motivates the  rearrangement algorithm given in Algorithm \ref{a:ra}.  Here we alternatively apply the integral operator, $K$, defined in \eqref{eq:HS}, and threshold the result in such a way so that $\int_\Omega \rho = 1$.  We've stated Algorithm \ref{a:ra} assuming that $|\{ K\rho_s = \alpha \} |_d = 0$ for all $s\in \mathbb N$. 

\begin{prop}  \label{prop:AlgIncrease}
Let $\Omega$ and $k$ satisfy the assumptions in Section \ref{s:Intro}.  Let $\rho_s$,  for $s= 0, 1, \ldots$,  be the iterates
of Algorithm \ref{a:ra} and assume that $|\{ K\rho_s = \alpha \} |_d = 0$.
Then the sequence $E[\rho_s]$ is strictly increasing for non-stationary iterates. 
\end{prop}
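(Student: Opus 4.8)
The plan is to exploit the quadratic, self-adjoint structure of $E$ together with the fact that the thresholding step of Algorithm \ref{a:ra} solves exactly a \emph{linear} maximization over the admissible class. Since $k(x,y)=f(|x-y|)$ is symmetric, the operator $K$ of \eqref{eq:HS} is self-adjoint on $L^2(\Omega)$. Writing $\delta := \rho_{s+1}-\rho_s$, I would first record the exact identity
\[
E[\rho_{s+1}]-E[\rho_s] = \langle K\rho_s,\,\delta\rangle_{L^2(\Omega)} + E[\delta],
\]
which follows by expanding $E[\rho_s+\delta]=\tfrac12\langle \rho_s+\delta,\,K(\rho_s+\delta)\rangle$ and collecting the two cross terms using the self-adjointness of $K$. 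The proof then splits cleanly into showing that the linear term is nonnegative and that the quadratic term $E[\delta]$ is \emph{strictly} positive whenever $\delta\neq 0$.

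The quadratic term is handled immediately by positive definiteness of $k$: exactly as in the strict-convexity argument in the proof of Proposition \ref{p:bang-bang}, $E[\delta]=\tfrac12\langle\delta,K\delta\rangle>0$ for every $\delta\not\equiv 0$. For a non-stationary iterate we have $\rho_{s+1}\neq\rho_s$, hence $\delta\neq 0$ in $L^\infty(\Omega)$, so this term alone supplies the strict inequality; it therefore suffices to show the linear term is nonnegative. For that, the key observation is that $\rho_{s+1}$ is the maximizer of $\rho\mapsto\langle K\rho_s,\rho\rangle$ over $A(\Omega,\rho_+,\rho_-)$, which is a bathtub-principle statement. Setting $g:=K\rho_s$ and letting $\alpha=\alpha_s$ be the threshold chosen by the algorithm so that $\int_\Omega\rho_{s+1}=1$, we have $\rho_{s+1}=\rho_+$ on $\{g>\alpha\}$ and $\rho_{s+1}=\rho_-$ on $\{g<\alpha\}$. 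For any competitor $\rho\in A(\Omega,\rho_+,\rho_-)$ I would argue pointwise: on $\{g>\alpha\}$ one has $\rho_{s+1}-\rho=\rho_+-\rho\geq 0$, while on $\{g<\alpha\}$ one has $\rho_{s+1}-\rho=\rho_--\rho\leq 0$, so in either case $(g-\alpha)(\rho_{s+1}-\rho)\geq 0$ a.e. Integrating and using that both $\rho$ and $\rho_{s+1}$ have unit mass gives
\[
\langle g,\,\rho_{s+1}-\rho\rangle \;\geq\; \alpha\int_\Omega(\rho_{s+1}-\rho)\,dx \;=\; 0.
\]
Taking $\rho=\rho_s$ yields $\langle K\rho_s,\delta\rangle\geq 0$, and combining this with $E[\delta]>0$ in the identity above produces $E[\rho_{s+1}]>E[\rho_s]$.

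I do not expect a serious obstacle here; the content is essentially the recognition that thresholding is the bathtub maximizer and that strictness is carried by the positive-definite quadratic term rather than the linear one (the latter may well vanish near a fixed point). The subtle points I would flag as routine verifications are: (a) that $g=K\rho_s$ is bounded and measurable, so the level sets and the thresholding prescription are meaningful, which uses $k\in L^1$ and $\rho_s\in L^\infty$; and (b) the role of the standing hypothesis $|\{K\rho_s=\alpha\}|_d=0$, which guarantees both that the prescription defines $\rho_{s+1}$ a.e. and that a mass-preserving threshold $\alpha_s$ exists, so that $\rho_{s+1}\in A(\Omega,\rho_+,\rho_-)$ with $\int_\Omega\rho_{s+1}=1$. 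The remaining care is purely notational: ensuring self-adjointness of $K$ is invoked correctly in the expansion and that the pointwise sign bookkeeping on $\{g>\alpha\}$ and $\{g<\alpha\}$ is consistent with the constraints $\rho_-\leq\rho\leq\rho_+$.
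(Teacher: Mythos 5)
Your proof is correct and follows essentially the same route as the paper's: the strict-convexity bound $E[\rho_{s+1}]-E[\rho_s] > \langle K\rho_s,\,\rho_{s+1}-\rho_s\rangle$ (your exact quadratic expansion with $E[\delta]>0$) combined with the observation that the thresholding step maximizes the linear functional $\rho \mapsto \langle K\rho_s,\rho\rangle$ over $A(\Omega,\rho_+,\rho_-)$. The only difference is that the paper invokes the bathtub principle \citep[Theorem 1.14]{lieb2001analysis} for that linear step, whereas you prove it inline via the pointwise sign argument $(K\rho_s-\alpha)(\rho_{s+1}-\rho)\geq 0$ a.e.; this is the same fact, made self-contained.
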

\begin{proof}
By the strict convexity of $E$ and \eqref{eq:deriv}, for $\rho_{s+1} \neq \rho_s$,
$$
E(\rho_{s+1})- E(\rho_s) 
>  \langle K[\rho_s],  \rho_{s+1} - \rho_s \rangle 
=  \langle K[\rho_s ],  \rho_{s+1} \rangle -  \langle K[\rho_s ],  \rho_{s} \rangle . 
$$
The bathtub principle \citep[Theorem 1.14]{lieb2001analysis} shows that 
$$ \langle K[\rho_s ],  \rho_{s+1} \rangle  \geq   \langle K[\rho_s ],  \rho_{s} \rangle.  $$ 
This, in turn, implies that  $E(\rho_{s+1}) >  E(\rho_s)$ for all non-stationary iterations. 
\end{proof}
Non-stationary iterations of this algorithm have strictly increasing values, $E[\rho_s]$, so the sequence $\{\rho_s \}_{s=1}^\infty$ will have a limit point, but we have not proven that such points are optimal $\rho^\star$. However, for a discretization of the problem, there are only a finite number of $\{ \rho_-, \rho_+ \}$-valued functions, so  
Proposition \ref{prop:AlgIncrease} shows that  Algorithm \ref{a:ra} converges to a critical point in a finite number of iterations.  

The following proposition shows that the algorithm preserves symmetry: if $\Omega$ and the initial $\rho_0(x) \in A(\Omega,\rho_+, \rho_-)$ have a reflection symmetry, the algorithm can only converge to a critical point with the same symmetry. One example of this behavior can be illustrated for the disjoint union of two identical intervals considered in Section \ref{s:Intervals}. 
\begin{prop} Let $k(x,y) = f(|x-y|)$ for some $f\colon \mathbb R \to \mathbb R$. Suppose $\rho_0(x) \in A(\Omega,\rho_+, \rho_-)$ has the same reflection symmetry as $\Omega$. 
Assume that the iterates  $\{ \rho_s \}_s$ of Algorithm  \ref{a:ra} satisfy  $|\{ K\rho_s = \alpha \} |_d = 0$.
%Suppose also that any ambiguity in the choice of level set in the thresholding step of Algorithm  \ref{a:ra}  is resolved to preserve this symmetry. 
Then all iterates enjoy the same reflection symmetry. 
\end{prop}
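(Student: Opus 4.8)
The plan is to argue by induction on the iteration index $s$, with the base case supplied directly by the hypothesis on $\rho_0$. Write $T\colon\mathbb R^p\to\mathbb R^p$ for the reflection under which $\Omega$ is invariant, so that $T$ is an isometric involution with $T(\Omega)=\Omega$, and say that a function $\phi$ is \emph{symmetric} if $\phi\circ T=\phi$ almost everywhere. The whole argument reduces to two observations: that the operator $K$ carries symmetric functions to symmetric functions, and that the thresholding step of Algorithm \ref{a:ra} preserves symmetry. Granting both, the inductive step is immediate, since a single iteration of the algorithm is the composition of these two operations.

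For the first observation, I would fix a symmetric $\rho_s$ and compute, using $k(x,y)=f(|x-y|)$,
\[
(K\rho_s)(Tx)=\int_\Omega f(|Tx-y|)\,\rho_s(y)\,dy.
\]
Substituting $y=Tz$ and using that $T$ is an isometry (so $|Tx-Tz|=|x-z|$), that $T$ preserves $\Omega$, and that $T$ preserves $d$-dimensional Hausdorff measure (so the change of variables has unit Jacobian), together with the inductive hypothesis $\rho_s(Tz)=\rho_s(z)$, yields $(K\rho_s)(Tx)=\int_\Omega f(|x-z|)\rho_s(z)\,dz=(K\rho_s)(x)$. Hence $K\rho_s$ is symmetric. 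Note this step uses only that $k$ depends on $|x-y|$, not the positive-definiteness or complete-monotonicity hypotheses of Section \ref{s:Intro}.

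For the second observation, since $K\rho_s$ is symmetric, its superlevel and sublevel sets $\{K\rho_s>\alpha\}$ and $\{K\rho_s<\alpha\}$ are $T$-invariant for every $\alpha$, and because $T$ is measure-preserving the mass $|\{K\rho_s>\alpha\}|_d$ that determines the threshold is the same whether computed from $K\rho_s$ or from $(K\rho_s)\circ T$. Consequently the value $\alpha$ selected to enforce $\int_\Omega\rho_{s+1}=1$ is identical in both cases, and the standing assumption $|\{K\rho_s=\alpha\}|_d=0$ guarantees that $\rho_{s+1}$ is defined unambiguously almost everywhere. It then follows that $\rho_{s+1}(Tx)=\rho_{s+1}(x)$ for a.e.\ $x$, completing the induction.

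I do not anticipate a genuine obstacle here; the only point requiring minor care is the bookkeeping in the thresholding step, namely verifying that the level-set measure condition makes the threshold well-defined and that the substitution $y=Tz$ is measure-preserving on $\Omega$. The latter is automatic, since reflections are isometries and Hausdorff measure is isometry-invariant, so the argument is ultimately a direct change of variables exploiting that $k$ depends only on $|x-y|$.
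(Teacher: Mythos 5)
Your proposal is correct and follows essentially the same route as the paper's proof: a change of variables under the reflection, using that $f$ depends only on $|x-y|$ and that the reflection is a measure-preserving isometry of $\Omega$, followed by induction on the iteration index. In fact your writeup is slightly more complete than the paper's, since you explicitly verify that the volume-preserving thresholding step selects the same threshold $\alpha$ and hence preserves symmetry, a point the paper leaves implicit.
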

\begin{proof} 
Let $r \colon \mathbb R^p \to \mathbb R^p$ denote a reflection over a line of symmetry with $r(\Omega) = \Omega$ and $\rho_0(r(x)) = \rho_0(x)$. 
We'll show that $\rho_1$ satisfies $\rho_1(r(x)) = \rho_1(x)$ a.e., which by induction proves the proposition. We compute for a.e. $x \in \Omega$,
\begin{align*}
K\rho( r(x)) = \int_{\Omega} f( | r(x) - y| ) \rho(y) dy 
= \int_{r(\Omega)} f( | x - r(y)| ) \rho(r(y)) dy 
= \int_{\Omega} f( | x - z| ) \rho(z) dz 
= K \rho(x). 
\end{align*}
\end{proof}

\begin{rem} \label{r:MBO}
Algorithm  \ref{a:ra} is very similar to the Merriman-Bence-Osher 
(MBO) diffusion-generated method \cite{MerrimanBenceOsher94} with the following differences:
(i) The ``diffusion step'' in MBO  (convolution with the heat kernel) has been replaced by a more general integral operator in \eqref{eq:HS} and 
(ii) the ``thresholding step'' in MBO is replaced by a volume preserving thresholding step as in \cite{ruuth2003simple}. In this context, the energy \eqref{e:E} can be viewed as the corresponding generalization of the Lyapunov function for MBO given in \cite{esedoglu2013}. 
\end{rem}

\begin{algorithm}[t]
\caption{\label{a:ra} A rearrangement algorithm for solving \eqref{e:DensityFormulation}. }
\KwData{ Initial guess $\rho_0(x) \in A(\Omega,\rho_+, \rho_-)$ and a convergence tolerance $\varepsilon > 0$.}
Set s=0.

\While{ $s\leq1$ or $\| \rho_s - \rho_{s-1} \|_{L^1(\Omega)} > \varepsilon$, }{
Define $\phi = K\rho_s$.  

Find the value $\alpha$ such that $ | \{ \phi \geq \alpha \} |_d  = \frac{|\Omega|_d^{-1} -\rho_-}{\rho_+ - \rho_-} |\Omega|_d$. 

Set
$ \rho_{s+1}(x) = \begin{cases}
\rho_+ & \phi(x) \geq \alpha  \\
\rho_- & \phi(x) < \alpha 
\end{cases} $. 

Set s = s+1.
}
\end{algorithm}

\subsection{Computational examples} \label{s:CompEx}
We implement  Algorithm \ref{a:ra}  in Matlab and consider several examples. In all of the following examples, the exponential kernel, $k(x,y) = \exp(-|x-y|)$, is used.

\subsection*{``Clover-shaped'' domain} Consider the ``clover shaped'' domain, $\Omega$, given in polar coordinates by 
$$
\Omega = \{ (r,\theta\} \colon r \leq 1 + 0.3\cdot\cos(4 \theta) \} \subset \mathbb R^2
$$
and an initial $\rho_0(x)$ which is given in the left panel of Figure \ref{f:RA}. The parameters are chosen so that $|\Omega_+|_d / |\Omega|_d = .25$.  Here white denotes $\rho_+$ and black denotes $\rho_-$.  For a $200\times200$ discretization, the iterations of Algorithm \ref{a:ra} become stationary in 5 iterations. Iterations 1 and 5 are also plotted in the center and right panels of Figure \ref{f:RA}.   At the stationary solution, $\Omega_+$ is a ball centered in $\Omega$.

\subsection*{Annulus} We next consider the annulus, given in polar coordinates by 
$$
\Omega_\alpha = \{ (r,\theta\} \colon \alpha \leq r \leq 1.2  \} \subset \mathbb R^2. 
$$
We'll consider varying the inner radius, $\alpha$. 
Consider $\alpha = 0.6$ and an initial $\rho_0(x)$ which is given in the top left panel of Figure \ref{f:RC}. The parameters are chosen so that $|\Omega_+|_d / |\Omega|_d = 0.1$.  Here white denotes $\rho_+$ and black denotes $\rho_-$.  For a $200\times200$ discretization, the iterations of Algorithm \ref{a:ra} become stationary in 48 iterations. Iterations 1 and 48 are also plotted in the top center and top right panels of Figure \ref{f:RC}.   At the stationary solution, $\Omega_+$ is a centered annulus. 

 Consider $\alpha = 0.7$ and an initial $\rho_0(x)$ which is given in the bottom left panel of Figure \ref{f:RC}. The parameters are chosen so that $|\Omega_+|_d / |\Omega|_d = 0.1$.  Here white denotes $\rho_+$ and black denotes $\rho_-$.  For a $200\times200$ discretization, the iterations of Algorithm \ref{a:ra} become stationary in 9 iterations. Iterations 1 and 9 are also plotted in the bottom center and bottom right panels of Figure \ref{f:RC}.   The optimal solution breaks the  symmetry of the annulus,  as in the explicit example from  Section \ref{s:Intervals},

\begin{figure}[t!]
\begin{center}
\includegraphics[width=.75\textwidth]{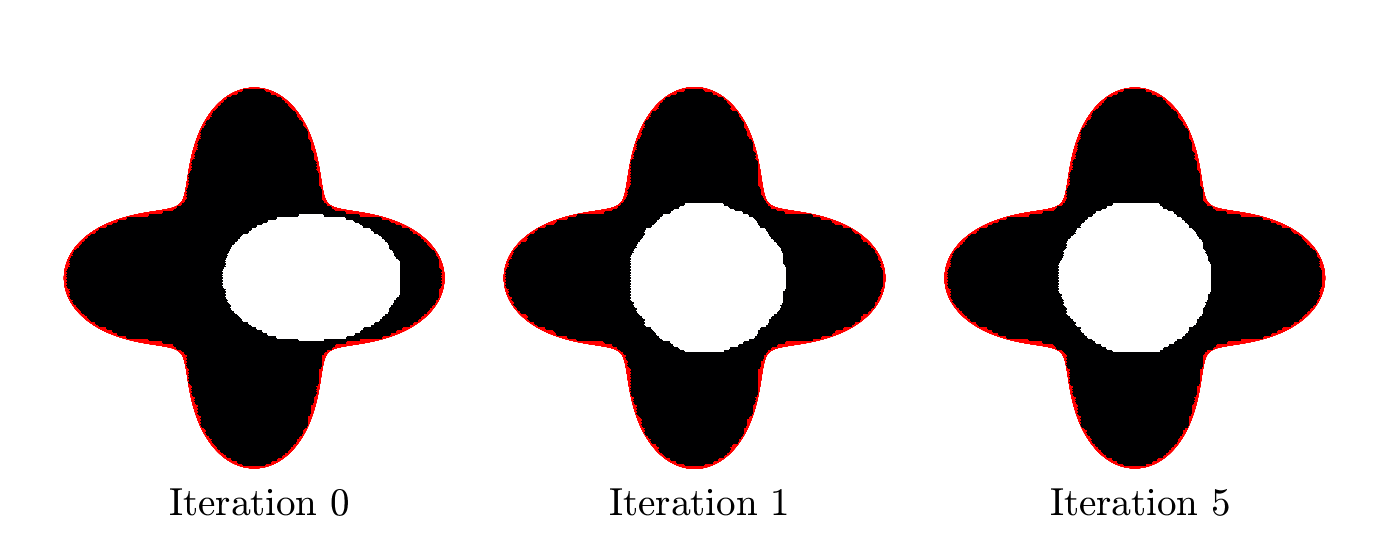}
\caption{An illustration of the rearrangement algorithm (Algorithm \ref{a:ra}) for a ``clover-shaped'' domain, outlined in red. The white region corresponds to $\Omega_+$ (where $\rho = \rho_+$) and the black region denotes $\Omega_-$ (where $\rho = \rho_-$). Algorithm \ref{a:ra} converges in 5 iterations with a final $\Omega_+$ that is a centered domain. See Section \ref{s:CompEx}.}
\label{f:RA}
\end{center}
\end{figure}

\begin{figure}[t!]
\begin{center}
\includegraphics[width=.75\textwidth]{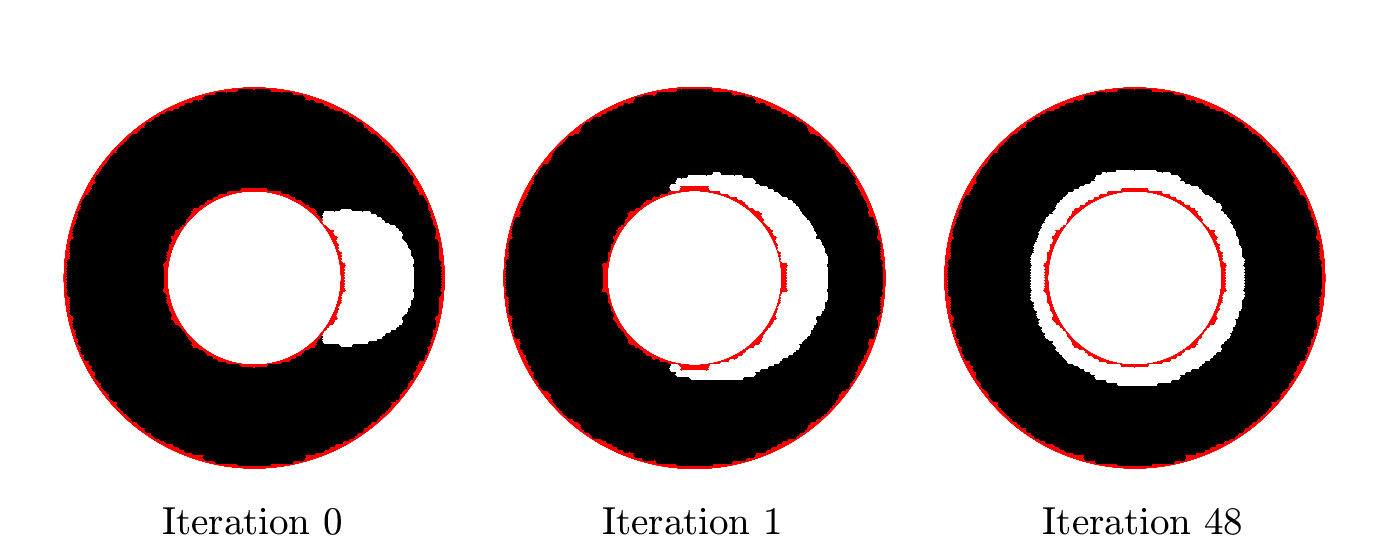}
\includegraphics[width=.75\textwidth]{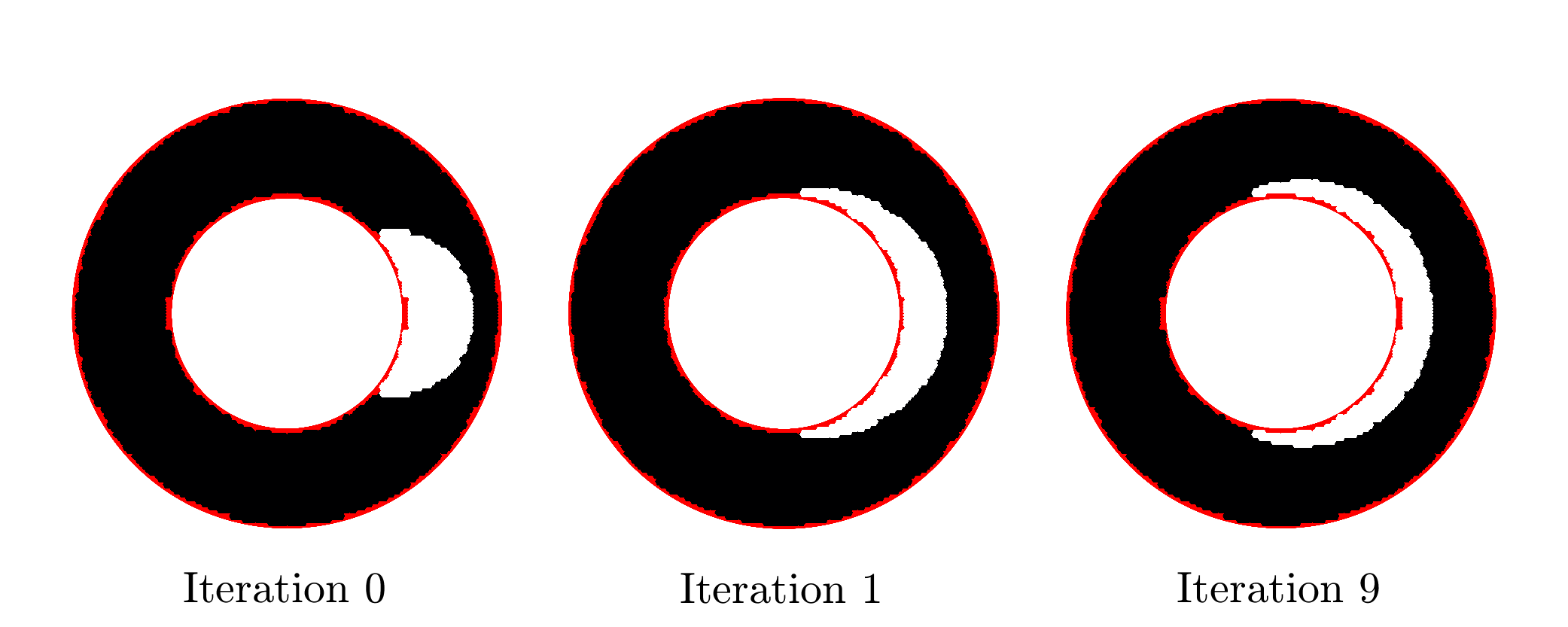}
\caption{An illustration of the rearrangement algorithm (Algorithm \ref{a:ra}) for an annulus, $\Omega_\alpha$. The white region corresponds to $\Omega_+$ (where $\rho = \rho_+$) and the black region denotes $\Omega_-$ (where $\rho = \rho_-$). 
{\bf (top)} For inner radius $\alpha = 0.6$ and this initial condition, Algorithm \ref{a:ra} converges in 48 iterations with a final $\Omega_+$ that is a centered annulus. 
{\bf (bottom)} For inner radius $\alpha = 0.7$ and this initial condition, Algorithm \ref{a:ra}  converges in 9 iterations with a final $\Omega_+$ that breaks the rotational symmetry of the annulus. 
See Section \ref{s:CompEx}.}
\label{f:RC}
\end{center}
\end{figure}

\begin{figure}[t!]
\begin{center}
\includegraphics[width=.8\textwidth,trim= 0 0 0 30,clip]{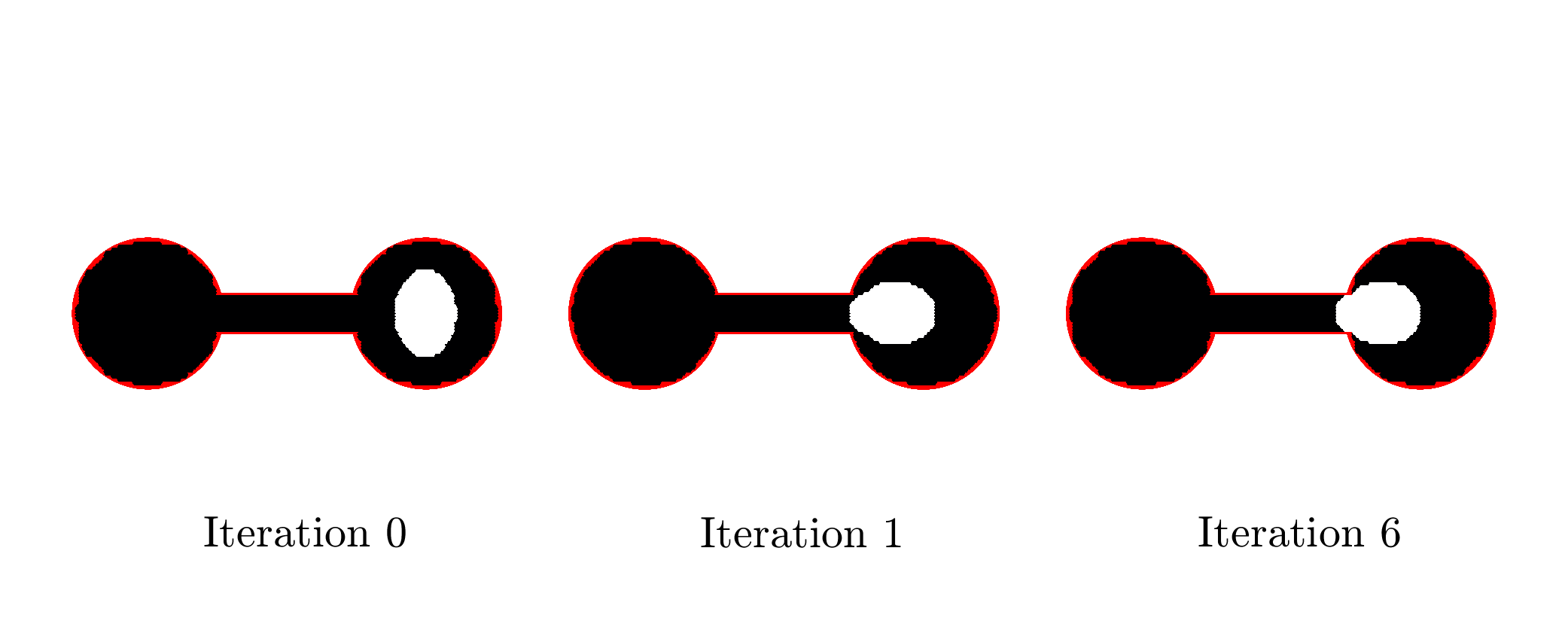}
\includegraphics[width=.8\textwidth,trim= 0 0 0 30,clip]{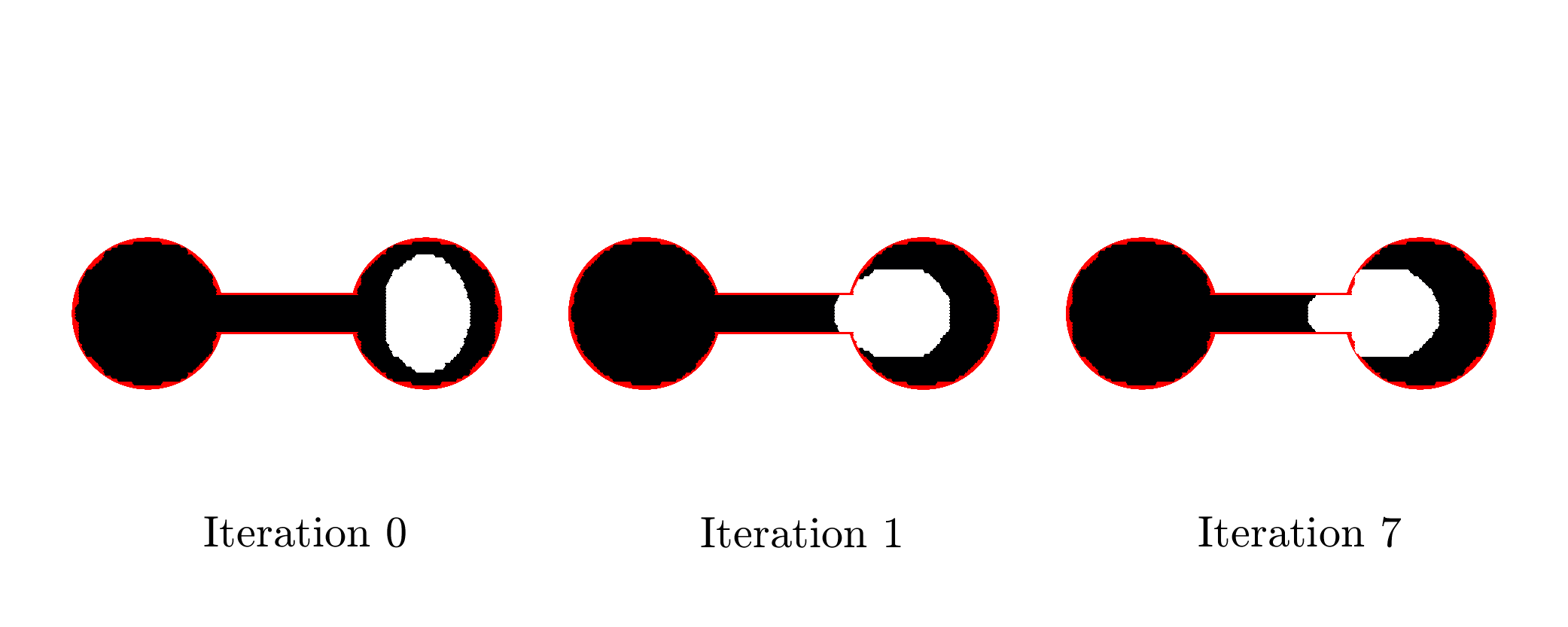}
\includegraphics[width=.8\textwidth,trim= 0 0 0 30,clip]{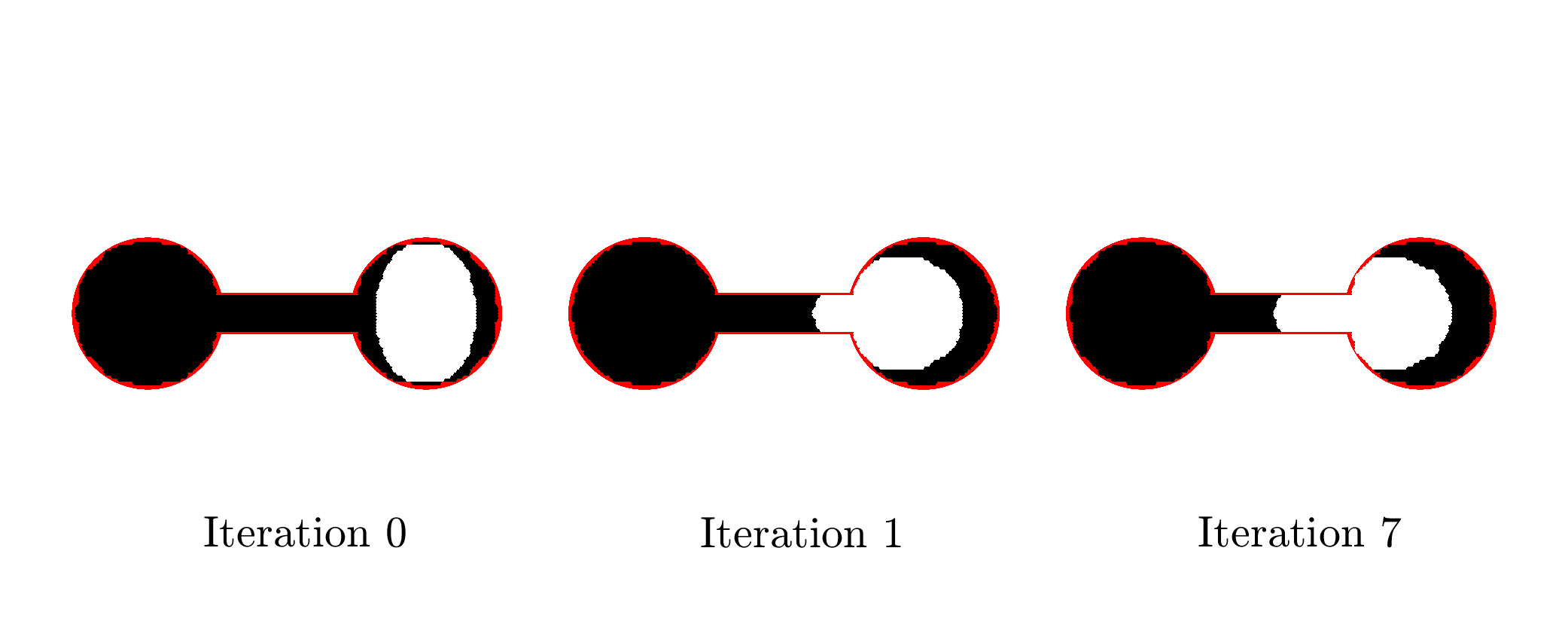}
\includegraphics[width=.8\textwidth,trim= 0 0 0 30,clip]{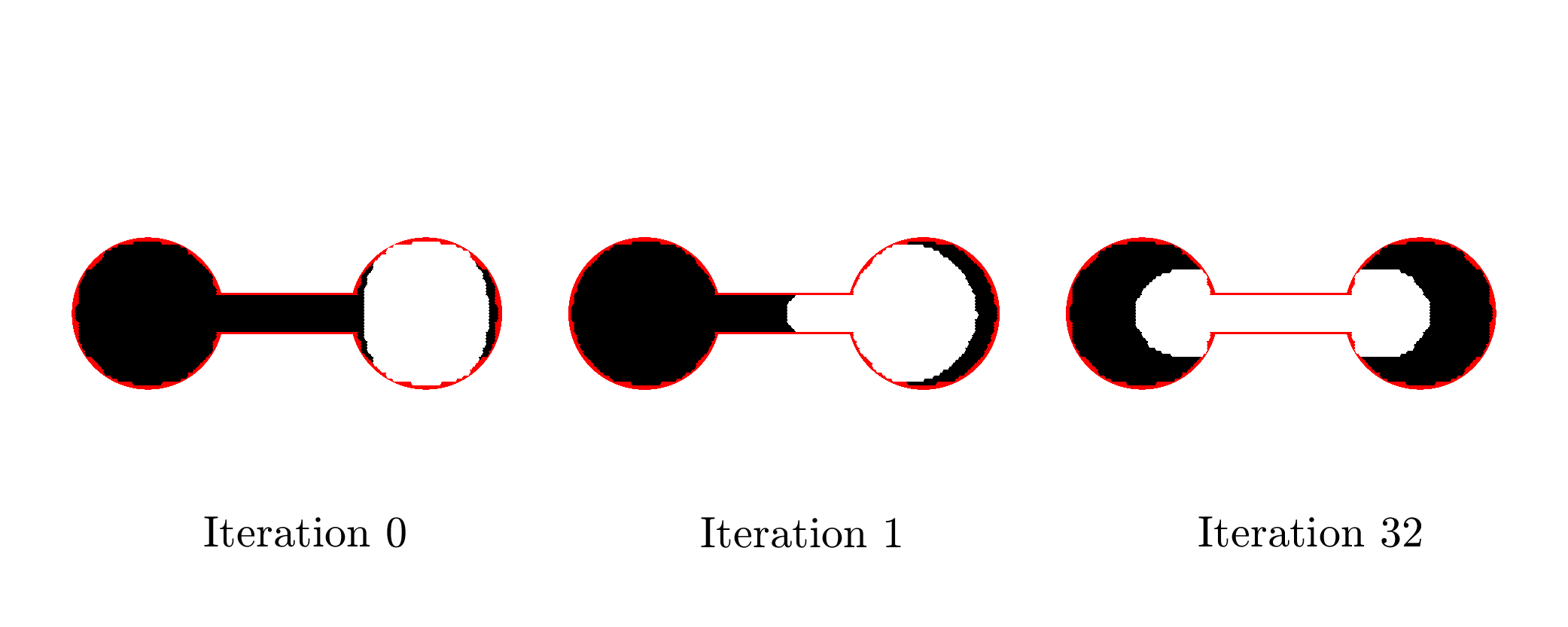}
\caption{An illustration of the rearrangement algorithm (Algorithm \ref{a:ra}) for a ``dumbbell-shaped'' domain. The white region corresponds to $\Omega_+$ (where $\rho = \rho_+$) and the black region denotes $\Omega_-$ (where $\rho = \rho_-$). The initial conditions (left) are chosen so that $|\Omega_+|_d/|\Omega|_d = 0.1, \ 0.2, \ 0.3$ and 0.4. The first iterations are shown in the center panel and the stationary states are shown in the right panel. Note that symmetry is broken in the top three solutions.  See Section \ref{s:CompEx}. }
\label{f:RA2}
\end{center}
\end{figure}

\subsection*{``Dumbbell-shaped'' domain}
Finally, we next consider the ``dumbbell shaped'' domain, $\Omega$, given by 
$$
B_{0.5}(1,0) \ \cup \  B_{0.5}(-1,0) \ \cup \ [-1,1] \times [-0.1,0.1] .
$$
In Figure \ref{f:RA2}, for a $200\times 200$ discretization,  we illustrate the evolution of $\rho$ under the rearrangement algorithm (Algorithm \ref{f:RA}) with initial condition such that 
 $|\Omega_+|_d/|\Omega|_d = 0.1, \ 0.2, \ 0.3$ and 0.4. As in the explicit example from  Section \ref{s:Intervals}, the solution breaks symmetry.

\section{Discussion} \label{s:disc}
In this paper, we considered the extremal pointset configuration problem \eqref{e:DiscOpt} of maximizing a kernel-based energy subject to geometric constraints.  We also formulated an extremal density problem \eqref{e:DensityFormulation} which we showed to be related to the pointset configuration problem in the limit as the number of points tends to infinity.  For both problems, we were able to show that extremal solutions exist under the appropriate hypotheses.  For the density problem, we were also able to deduce several important properties of the extremal density, such as the bang-bang property.  We explored several examples in great detail and provided an especially detailed analysis in the case of a sphere or a ball, where rearrangement inequalities allowed us to precisely describe the extremal solutions to the density problem.  In the general case, the optimal solution may not be unique or share the symmetries of the domain.  Our observations lead us to make a conjecture for a sufficient condition that implies uniqueness.  We concluded by developing a computational method for the density problem that is very similar to the Merriman-Bence-Osher (MBO) diffusion-generated method that we proved to be increasing for all non-stationary iterations.  The method was applied to study several additional example sets.

Our analysis did not provide an algorithmic solution to the pointset problem \eqref{e:DiscOpt} and one could ask if such an algorithm can be easily obtained (for $n$ large) from the solution of the continuous problem \eqref{e:DensityFormulation}. In Section \ref{s:DiscSphere}, on the sphere, we performed some calculations suggesting that this is possible.  More generally, we would like to claim that given a solution to \eqref{e:DensityFormulation} that defines a partition, $\Omega = \Omega_+ \sqcup \Omega_-$, one should be able to approximately solve \eqref{e:DiscOpt} by placing
$n_+ = n \rho_+ |\Omega_+|_d  $ points in a best packing configuration in the region $\Omega_+$ and $n_- = n \rho_- |\Omega_-|_d $ points in a best covering configuration in the region $\Omega_-$.  Some small modification of the configuration would be required near the interface between $\Omega_+$ and $\Omega_-$ to satisfy the constraints in \eqref{e:DiscOpt}; see Remark \ref{r:HigherDim}. From the computational experiments in Section \ref{s:ex}, we suspect that the interface between the regions $\Omega_+$ and $\Omega_-$ is very regular, so it might be possible to make this argument precise.  One substantial obstacle to the implementation of this algorithm is that best packing and best covering configurations are difficult to obtain (or even approximate), especially in high dimensions.  An interesting problem for future research would be to find precise solutions to \eqref{e:DiscOpt} for certain sets of interest and small values of $n$ as was done in Theorem \ref{t:interval4} for the interval when $n=4$.

\smallskip

Another possible approach to investigating the relationship between the  the discrete \eqref{e:DiscOpt} and continuous \eqref{e:DensityFormulation} problems considered here would be to study pointset configurations that arise naturally from other problems.  For instance, instead of using best-packing configurations and best-covering configuration as described above, one could use pointset configurations that minimize a Riesz energy or cubature nodes.  An extensive list of interesting configurations on $S^2$ is provided in \cite{michaels16}.  We note that i.i.d. random pointset configurations should not be considered for Problem \eqref{e:DiscOpt}.  Indeed, it is known that i.i.d. uniformly sampled points are not expected to be admissible for \eqref{e:DiscOpt}. Namely, \eqref{e:DiscOpt2} is violated since the expected separation distance is proportional to $n^{-2/d}$ \cite{cai2013distributions,brauchart2015random}.  In a similar vein, it would be interesting to consider how solutions to \eqref{e:DensityFormulation} compare to maximizers of other objectives that are convex in $\rho$, such as the spectral objectives considered in \cite{OM16}.

\smallskip

While we explored properties of extremal solutions to \eqref{e:DensityFormulation}, there is still much more we would like to know about the regions $B_{\pm}$ (see Conjecture \ref{c:convex}).  Another interesting problem to explore would be to find the right hypotheses on $\Omega$ to ensure that $B_+$ is connected.  In Section \ref{s:ex}, we developed a rearrangement algorithm for finding critical points of \eqref{e:DensityFormulation}. As commented in Remark \ref{r:MBO}, for the particular case that the kernel is $k(x,y) = (4 \pi \tau)^{-d/2} \exp(- |x-y|^2 / 4\tau)$ with $\tau >0$, this is similar  to MBO diffusion generated method with a volume constraint.  In the limit as $\tau \to 0$, the MBO evolution evolves according to mean-curvature flow \cite{Evans93} and Algorithm \ref{a:ra} thus minimizes the volume of the boundary between $\Omega_+$ and $\Omega_-$. That is, it appears that the interface between the sets where $\rho=\rho_+$ and $\rho=\rho_-$ is a minimal surface.  It would be useful to have a rigorous theorem to this effect.

It is tempting to think that the optimal $\rho$ could be associated with level sets of some function, \eg, the potential, $V(x) = \int_\Omega k(x,y) dy$ or  the principal eigenfunction of $K$. However, Figure \ref{f:RA2} provides a counterexample for this in the non-convex case. Looking at the middle-right and bottom-right panels of the figure, the sets are not subsets of one another  and therefore cannot both be the level sets of the same function.

%\clearpage
\printbibliography

\end{document}